\newcommand\lsection{\@startsection {section}{1}{\z@}%
                                   {-3.5ex \@plus -1ex \@minus -.2ex}%
                                   {1.0ex \@plus.2ex}%
                                   {\normalfont\large\bfseries}}
\newtheorem{thm}{Theorem}[section]
\newtheorem{cor}[thm]{Corollary}
\newtheorem{lem}[thm]{Lemma}
\newtheorem{prop}[thm]{Proposition}
\theoremstyle{definition}
\newtheorem{defn}[thm]{Definition}
\theoremstyle{definition}
\newtheorem{rem}[thm]{Remark}
\theoremstyle{definition}
\definecolor{orange}{rgb}{1,0.5,0}
\newtheorem{exam}[thm]{Example}
\newcommand{\BS}[1]{\boldsymbol{#1}}
\newcommand{\BB}[1]{\mathbb{#1}}
\newcommand{\pd}[2]{\frac{\partial#1}{\partial#2}}
\newcommand{\rank}{\mathop{\mathrm{rank}}\nolimits}
\newcommand{\Id}{\mathop{\mathrm{Id}}\nolimits}
\newcommand{\Cons}{\mathop{\mathrm{Cons}}\nolimits}
\newcommand{\sint}{\mathop{\textrm{\scriptsize $\mathop{\int}\nolimits$}}\nolimits}
\newcommand{\Lk}{{\mathrm{Lk}}}
\newcommand{\e}{{\textrm{even}}}
\newcommand{\odd}{{\textrm{odd}}}
\newcommand{\medsum}{\mathop{\textrm{\scriptsize $\mathop\sum$}}}
\begin{document} 
\title{\LARGE On the topology of stable maps}
\author{Nicolas Dutertre and Toshizumi Fukui}
\address{Universit\'e de Provence, Centre de Math\'ematiques et Informatique,
39 rue Joliot-Curie,
13453 Marseille Cedex 13, France.}
\email{dutertre@cmi.univ-mrs.fr}
\address{Department of Mathematics, Faculty of Science, Saitama University, 255 Shimo-Okubo, Urawa 338, Japan.}
\email{tfukui@rimath.saitama-u.ac.jp}
\thanks{Mathematics Subject Classification (2000) : 57R20, 57R45, 57R70,
58C27  \\
N. Dutertre is supported by {\em Agence Nationale de la Recherche}
(reference ANR-08-JCJC-0118-01)}

\begin{abstract}
We investigate how Viro's integral calculus applies 
for the study of the topology of stable maps. 
We also discuss several applications to Morin maps 
and complex maps.
\end{abstract}
\maketitle
\markboth{Nicolas Dutertre and Toshizumi Fukui}
{On the topology of stable  maps }
\lsection{Introduction}\label{introduction}
It is well known that there is a deep relation between the topology of a
manifold and the topology of the critical locus of maps. The best
example of this fact is Morse Theory which gives the homotopy type of  a
compact manifold in terms of the Morse indices of the critical points of
a Morse function.  Let us mention other examples. 

R.~Thom \cite{Thom} proved that the Euler characteristic of a compact
manifold $M$ of dimension at least $2$ had the same parity as the number of
cusps of a generic map $f : M \to \mathbb{R}^2$. Latter H.~I.~Levine
\cite{Levine} improved this result giving an equality relating $\chi(M)$
and the critical set of $f$. In \cite{Fukuda1988}, T.~Fukuda generalized
Thom's result to Morin maps $f : M \to \mathbb{R}^p$ when $\dim M
\ge p$. He proved that:
\begin{equation}\label{ThomFukuda}
\chi(M) + \sum_{k=1}^p \chi (\overline{A_k(f)}) =0 \bmod 2,
\end{equation}
where $A_k(f)$ is the set of points $x$ in $M$ such that $f$ has a
singularity of type $A_k$ at $x$ 
(see Section \ref{Euler characteristics of local generic fibers} 
for the definition of $A_k$). 
Furthermore if $f$ has only fold points (i.e., singularities of
type $A_1$), then T.~Fukuda gave an equality relating $\chi(M)$ to the
critical set of $f$. T.~Fukuda's formulas were extended to the case of a
Morin mapping $f : M \to N$, where $\dim M \ge \dim N$, by O.~Saeki
\cite{Saeki}. 
When $\dim M=\dim N$, 
similar formulas were obtained by J.~R.~Quine \cite{Quine} and
I.~Nakai \cite{Nakai1996}. 
On the other hand, Y.~Yomdin \cite{Yomdin83} showed the equality among 
Euler characteristics of singular sets of holomorphic maps.  
As Y.~Yomdin and I.~Nakai showed in this context, 
the integral calculus due to O.~Viro \cite{Viro1990} is 
useful to find relations like \eqref{ThomFukuda} for stable maps.
In this paper, we investigate how Viro's integral calculus applies 
in enough wide setup. To do this we introduce the notion 
of local triviality at 
infinity and give some examples to illustrate this notion  
in section \ref{Local triviality at infinity}. 
T.~Ohmoto showed that Yomdin-Nakai's formula is generalized to
the statement in terms of characteristic class and discuss a relation
with Thom polynomial
in his lecture of the conference on the occasion of 70th birthday of
T.~Fukuda held on 20 July 2010.

We consider a stable map $f:M\to N$ 
between two smooth manifolds $M$ and $N$. We assume that $\dim
M \ge \dim N$, that $N$ is connected and that $M$ and $N$ have finite
topological types. We also assume that $f$ is locally trivial at
infinity (see Definition \ref{Def:localTriv}) 
and has finitely many singularity 
types. Then the singular set $\Sigma(f)$ of $f$ is decomposed into a
finite union $\sqcup_{\nu} \nu(f)$, where $\nu(f)$ is the set of
singular points of $f$ of type $\nu$. In Theorems \ref{RThm1},
\ref{RThm1B}, \ref{RThm1C} and \ref{RThm1D}, we establish several
formulas between the Euler characteristics with compact support of $M$,
$N$ and the $\nu(f)$'s. We apply them to maps having 
singularities of type $A_k$ or $D_k$  in Corollaries \ref{RMorin1}, \ref{RMorin11}, \ref{RMorin2},
\ref{RMorin3}, \ref{RMorin4} and \ref{RMorin5}.

In Section \ref{SecMorin} of this paper, 
we apply the results of Section \ref{SecSatble1} 
to Morin maps and we use the link between the Euler characteristic with
compact support  and the topological Euler characteristic to recover and
improve several 
results of T.~Fukuda, T.~Fukuda and G.~Ishikawa, I.~Nakai,
J.~Quine, O.~Saeki. 
We end the paper with some remarks in
the complex case in Section \ref{ComplexMap}.

%
\setcounter{tocdepth}{2}
\tableofcontents
\lsection{Viro's integral calculus}\label{Geometric integration}
In this section, we recall the method of integration with respect to a finitely-additive measure due to O.~Viro  \cite{Viro1988}.

Let $X$ be a set and $\mathcal S(X)$ denote a collection of subsets of $X$ 
which satisfies the following properties:
\begin{itemize}
\item If $A$, $B\in\mathcal S(X)$, then 
$A\cup B\in\mathcal S(X)$, $A\cap B\in\mathcal S(X)$.
\item If $A\in\mathcal S(X)$, then $X\setminus A\in\mathcal S(X)$.
\end{itemize}

Let $R$ be a commutative ring.  
Let $\mu_X:\mathcal S(X)\to R$ be a map which satisfies the following properties:
\begin{itemize}
\item If $A$ and $B$ are homeomorphic then 
$\mu_X(A)=\mu_X(B)$. 
\item
For $A, B\in\mathcal S(X)$, 
 $
\mu_X(A\cup B)=\mu_X(A)+\mu_X(B)-\mu_X(A\cap B).
$ 
\end{itemize}

\begin{exam}\label{EulerChar}
The Euler characteristic of the 
homology with compact support, denoted by $\chi_c$,  satisfies these conditions for $\mu_X$ 
with $R=\BB{Z}$.
The mod 2 Euler characteristic 
also satisfies these conditions for $\mu_X$ 
with $R=\BB{Z}/2\BB{Z}$.
\end{exam}

Let $\Cons(X,\mathcal S(X),R)$ (or $\Cons(X)$, for short) 
denote the set of finite $R$-linear 
combinations of characteristic functions $\BS{1}_A$ of elements $A$ of 
$\mathcal S(X)$. 
For $B\in \mathcal S(X)$ and $\varphi \in \Cons(X,\mathcal S(X),R)$, we define the integral of $\varphi$ over $B$ with respect to $\mu_X$, denoted by $\int_B \varphi d\mu_X$,
by:
$$
\sint_B\varphi(x)d\mu_X(x)=\medsum_A\lambda_A\,\mu_X(A\cap B)\quad 
\textrm{ where }\quad 
\varphi=\medsum_A\lambda_A\BS{1}_A.
$$
We remark that 
$\mu_X(B)=\int_Bd\mu_X$. 

Now we are going to state a Fubini type theorem for this integration. We
need to introduce some notations.

We say that $(\mathcal S(X),\mathcal S(Y))$ \textbf{fits to} the map $f:X\to Y$
if the following conditions hold: 
\begin{itemize}
\item If $A\in\mathcal S(X)$, then $f(A)\in\mathcal S(Y)$.
\item $f^{-1}(y)\in\mathcal S(X)$ for $y\in Y$. 
\item
For $A\in\mathcal S(X)$, $B\in\mathcal S(Y)$ with $f(A)=B$,   
if $f|_A:A\to B$ is a locally trivial fibration with fiber $F$, then: 
$$
\mu_X(A)=\mu_X(F)\mu_Y(B).
$$
\item For $A\in\mathcal S(X)$, 
there is 
a filtration $\emptyset =B_{-1}  \subset B_0\subset B_1\subset\cdots\subset B_l=Y$
with $B_i\in\mathcal S(Y)$ such that: 
$$
f|_{f^{-1}(B_i\setminus B_{i-1})\cap A}:
{f^{-1}(B_i\setminus B_{i-1})\cap A} \to B_i\setminus B_{i-1}\quad (i=0,1,\dots,l)
$$
is a locally trivial fibration.
\end{itemize}
\begin{lem}[Fubini's theorem]\label{Fubini}
For $\varphi \in\Cons(X)$ and $f : X \rightarrow Y$ such that $(\mathcal S(X), \mathcal S(Y))$ fits to $f$, we have:
$$
\sint_X\varphi(x)d\mu_X=\sint_Yf_*\varphi(y)d\mu_Y
\qquad\textrm{  where }
\quad f_*\varphi(y)=\sint_{f^{-1}(y)}\varphi(x)d\mu_X.
$$
\end{lem}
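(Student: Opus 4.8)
The plan is to reduce to the case $\varphi = \BS{1}_A$ for a single $A \in \mathcal S(X)$ by linearity. Indeed, both sides of the claimed identity are $R$-linear in $\varphi$: the left-hand side is linear by the definition of the integral, and on the right-hand side the pushforward $f_*\varphi(y) = \int_{f^{-1}(y)} \varphi\, d\mu_X$ is linear in $\varphi$ (again by definition of the integral), while $\varphi \mapsto \int_Y f_*\varphi\, d\mu_Y$ is linear in $f_*\varphi$. So it suffices to prove
\[
\sint_X \BS{1}_A\, d\mu_X = \sint_Y (f_*\BS{1}_A)(y)\, d\mu_Y
\]
for $A \in \mathcal S(X)$; the left-hand side is just $\mu_X(A)$.

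The key step is to exploit the filtration $\emptyset = B_{-1} \subset B_0 \subset \cdots \subset B_l = Y$ supplied by the ``fits to'' hypothesis for this particular $A$, over whose strata $B_i \setminus B_{i-1}$ the restriction of $f$ to $f^{-1}(B_i \setminus B_{i-1}) \cap A$ is a locally trivial fibration, say with fiber $F_i$ (a priori $F_i$ could vary with the connected component of $B_i \setminus B_{i-1}$, so one should pass to a refinement of the filtration, or argue component by component, so that each stratum carries a genuine product-like structure with a single fiber type; since $Y$ has finitely many relevant pieces this is harmless). On each stratum the multiplicativity axiom gives
\[
\mu_X\bigl(f^{-1}(B_i \setminus B_{i-1}) \cap A\bigr) = \mu_X(F_i)\,\mu_Y(B_i \setminus B_{i-1}),
\]
and since the sets $f^{-1}(B_i \setminus B_{i-1}) \cap A$ for $i = 0, \dots, l$ partition $A$ into members of $\mathcal S(X)$, the additivity axiom (inclusion–exclusion with empty intersections) yields $\mu_X(A) = \sum_{i=0}^l \mu_X(F_i)\,\mu_Y(B_i \setminus B_{i-1})$. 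On the other side, for $y \in B_i \setminus B_{i-1}$ the fiber $f^{-1}(y) \cap A$ is homeomorphic to $F_i$, so $(f_*\BS{1}_A)(y) = \mu_X(f^{-1}(y) \cap A) = \mu_X(F_i)$ is constant on that stratum; hence $f_*\BS{1}_A = \sum_{i=0}^l \mu_X(F_i)\,\BS{1}_{B_i \setminus B_{i-1}}$ as an element of $\Cons(Y)$, and integrating over $Y$ gives $\int_Y f_*\BS{1}_A\, d\mu_Y = \sum_{i=0}^l \mu_X(F_i)\,\mu_Y(B_i \setminus B_{i-1})$. Comparing the two expressions finishes the proof.

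I expect the main subtlety — though not a deep obstacle — to be the bookkeeping around the fiber possibly depending on the connected component of a stratum, which forces one to refine the filtration (or work on each component separately) before the scalar $\mu_X(F_i)$ is well defined; one must also check that $f_*\BS{1}_A$ genuinely lies in $\Cons(Y)$, i.e.\ that the strata $B_i \setminus B_{i-1}$ belong to $\mathcal S(Y)$, which follows from the closure of $\mathcal S(Y)$ under finite unions, intersections, and complements. Everything else is a direct assembly of the three axioms: homeomorphism invariance (to identify each fiber with $F_i$), additivity (to split $A$ and to split $Y$ along the filtration), and the multiplicativity axiom for locally trivial fibrations (to relate $\mu_X$ on total spaces to $\mu_Y$ on bases).
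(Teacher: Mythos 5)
Your proof is correct and takes essentially the same route as the paper's: reduce by linearity to $\varphi=\BS{1}_A$, decompose $A$ along the filtration $\emptyset=B_{-1}\subset B_0\subset\cdots\subset B_l=Y$, apply additivity and the multiplicativity axiom on each piece $f^{-1}(B_i\setminus B_{i-1})\cap A$, and observe that $f_*\BS{1}_A$ equals $\mu_X(F_i)$ on each stratum so that integrating over $Y$ reproduces the same sum. The only difference is that you make explicit the bookkeeping (constancy of the fiber on each stratum, membership of the strata in $\mathcal S(Y)$) that the paper leaves implicit.
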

\begin{proof}
It is enough to show the case when $\varphi_X=\BS{1}_A$
for $A\in \mathcal S(X)$. So let us show that: 
$$
\mu_X(A)=\sint_Y\mu_X(A\cap f^{-1}(y))d\mu_Y. 
$$ 
We take 
a filtration $\emptyset \subset B_{-1} \subset  B_0\subset B_1\subset\cdots\subset B_l$
($B_i\in\mathcal S(Y)$) so that: 
$$
f|_{f^{-1}(B_i\setminus B_{i-1})\cap A}:
{f^{-1}(B_i\setminus B_{i-1})\cap A} \to B_i\setminus B_{i-1}\quad (i=0,2,\dots,l)
$$
is a locally trivial fibration with a fiber $F_i$.
Then we have: 
\begin{align*}
\mu_X(A)
=&\medsum_{i=0}^l\mu_X(f^{-1}(B_i\setminus B_{i-1})\cap A)
\qquad\textrm{(additivity of $\mu$)}\\
=&\medsum_{i=0}^l\mu_X(F_i)\mu_Y(B_i\setminus B_{i-1})
\qquad\textrm{(triviality of $f|_A$ on $B_i\setminus B_{i-1}$)}\\
=&\medsum_{i=0}^l\mu_X(F_i)\sint_{B_i\setminus B_{i-1}}d\mu_Y
\qquad\textrm{(definition of $\sint$)}\\
=&\medsum_{i=0}^l\sint_{B_i\setminus B_{i-1}}\mu_X(F_i)d\mu_Y\\
=&\medsum_{i=0}^l\sint_{B_i\setminus B_{i-1}}\mu_X(A\cap f^{-1}(y))d\mu_Y
\qquad(F_i=A\cap f^{-1}(y)\ \textrm{ for } y\in B_i\setminus B_{i-1})\\
=&\sint_Y\mu_X(A\cap f^{-1}(y))d\mu_Y
\qquad\textrm{(additivity of $\sint$)}.
\end{align*}
\end{proof}
\begin{cor}\label{Fubini2}
Set $X_i=\{x\in X \ \vert \ \varphi (x)=i\}$, and $Y_j=\{y\in Y \ \vert \ f_*\varphi(y)=j\}$.
Then we have:
$$
\medsum_{i} i\,\mu_X(X_i)=\medsum_jj\,\mu_Y(Y_j). 
$$
\end{cor}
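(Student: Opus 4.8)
The plan is to deduce Corollary~\ref{Fubini2} directly from the Fubini-type theorem, Lemma~\ref{Fubini}, by choosing an appropriate constructible function $\varphi$. First I would take $\varphi$ to be the constructible function in the statement, i.e.\ $\varphi=\sum_i i\,\BS{1}_{X_i}$, which lies in $\Cons(X)$ because the sum is finite (only finitely many values occur) and each $X_i=\{x\in X:\varphi(x)=i\}$ belongs to $\mathcal S(X)$. Then, by the definition of the integral, $\int_X\varphi\,d\mu_X=\sum_i i\,\mu_X(X_i\cap X)=\sum_i i\,\mu_X(X_i)$, which is exactly the left-hand side of the asserted identity.

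Next I would apply Lemma~\ref{Fubini} to conclude $\int_X\varphi\,d\mu_X=\int_Y f_*\varphi\,d\mu_Y$, where $f_*\varphi(y)=\int_{f^{-1}(y)}\varphi\,d\mu_X$. To evaluate the right-hand side, observe that by construction $f_*\varphi(y)=j$ precisely on the set $Y_j$; hence $f_*\varphi=\sum_j j\,\BS{1}_{Y_j}$ as an element of $\Cons(Y)$ (again a finite sum, and one should note $Y_j\in\mathcal S(Y)$, which follows from the hypothesis that $(\mathcal S(X),\mathcal S(Y))$ fits to $f$, together with the constructibility of $f_*\varphi$ implicit in Lemma~\ref{Fubini}). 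Applying the definition of the integral once more gives $\int_Y f_*\varphi\,d\mu_Y=\sum_j j\,\mu_Y(Y_j)$, which is the right-hand side. Chaining the three equalities yields $\sum_i i\,\mu_X(X_i)=\sum_j j\,\mu_Y(Y_j)$.

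The only real subtlety, and the step I expect to need the most care, is the bookkeeping that guarantees $\varphi$ and $f_*\varphi$ are genuinely constructible functions in the sense required, and in particular that the various sets $X_i$, $Y_j$ lie in $\mathcal S(X)$, $\mathcal S(Y)$ respectively and that the indexing sums are finite; these are essentially hypotheses built into the setup (the function $\varphi$ is given as constructible, and Lemma~\ref{Fubini} already produces $f_*\varphi$ as a constructible function on $Y$), so the corollary is really just a restatement of Lemma~\ref{Fubini} under the change of viewpoint from ``integrate a function'' to ``sum values weighted by the measure of level sets.'' Everything else is a direct unwinding of the definition of $\int$.
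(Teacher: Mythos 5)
Your proposal is correct and follows essentially the same route as the paper: both evaluate $\sint_X\varphi\,d\mu_X$ and $\sint_Y f_*\varphi\,d\mu_Y$ by decomposing over the level sets $X_i$ and $Y_j$ (yielding $\sum_i i\,\mu_X(X_i)$ and $\sum_j j\,\mu_Y(Y_j)$) and then chain the two via Lemma \ref{Fubini}. Your extra remarks on the constructibility of $f_*\varphi$ and on $Y_j\in\mathcal S(Y)$ are fine but not a departure from the paper's argument.
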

\begin{proof}
This is clear, since: 
\begin{align*}
\sint_X\varphi(x)d\mu_X
=&\medsum_i\sint_{X_i}\varphi(x)d\mu_X
=\medsum_i\sint_{X_i}id\mu_X
=\medsum_i i\mu_X(X_i),\\
\sint_Yf_*\varphi(y)d\mu_Y
=&\medsum_j\sint_{Y_j}f_*\varphi(y)d\mu_Y
=\medsum_j\sint_{Y_j} j d\mu_Y
=\medsum_j j\mu_Y(Y_j).
\end{align*}
\end{proof}
\begin{cor}\label{Fubini3}
If $f_*\varphi$ is a constant $d$ on $y\in Y$, we have: 
$$
\medsum_{i} i\,\mu_X(X_i)=d\,\mu_Y(Y). 
$$
\end{cor}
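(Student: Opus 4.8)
The plan is to derive Corollary \ref{Fubini3} as an immediate specialization of Corollary \ref{Fubini2}. First I would apply Corollary \ref{Fubini2} to the given $\varphi\in\Cons(X)$ and $f$, obtaining the identity
$$
\medsum_i i\,\mu_X(X_i)=\medsum_j j\,\mu_Y(Y_j).
$$
Then I would use the hypothesis that $f_*\varphi$ takes the constant value $d$ on all of $Y$: this says precisely that $Y_d=Y$ and $Y_j=\emptyset$ for $j\ne d$. Hence the right-hand sum collapses to the single term $d\,\mu_Y(Y)$, since $\mu_Y(\emptyset)=0$ (which follows from additivity, $\mu_Y(\emptyset)=\mu_Y(\emptyset\cup\emptyset)=2\mu_Y(\emptyset)-\mu_Y(\emptyset)$, or simply from $\mu_Y(\emptyset)=\int_\emptyset d\mu_Y=0$). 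Substituting gives the claimed formula.

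Alternatively, and perhaps more transparently, I would argue directly at the level of integrals without invoking Corollary \ref{Fubini2}: by Fubini's theorem (Lemma \ref{Fubini}),
$$
\medsum_i i\,\mu_X(X_i)=\sint_X\varphi(x)\,d\mu_X=\sint_Y f_*\varphi(y)\,d\mu_Y=\sint_Y d\,d\mu_Y=d\,\mu_Y(Y),
$$
where the first equality is the computation already carried out in the proof of Corollary \ref{Fubini2}, the third uses $f_*\varphi\equiv d$, and the last uses $\mu_Y(Y)=\int_Y d\mu_Y$. Either route is a one-line deduction.

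I do not expect any genuine obstacle here: the statement is a routine corollary, and the only point requiring a word of care is the vanishing $\mu_Y(\emptyset)=0$, needed to discard the terms $Y_j$ with $j\ne d$ in the first approach; the second approach sidesteps even that. I would therefore simply record the short computation displayed above as the proof.
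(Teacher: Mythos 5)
Your proof is correct and matches the intended argument: the paper states Corollary \ref{Fubini3} without proof, treating it as an immediate specialization of Corollary \ref{Fubini2} (equivalently of Lemma \ref{Fubini}), which is exactly the one-line deduction you give, including the harmless observation that $\mu_Y(\emptyset)=0$.
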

In the sequel, we will apply O.~Viro's integral calculus to investigate 
topology of stable maps (see \cite{Nakai1996} 
and \cite{Nakai2000} for a similar strategy).

\lsection{Local triviality at infinity}\label{Local triviality at infinity}
In this section, we define the notion of local triviality at infinity
for a smooth map.

\begin{defn}\label{Def:localTriv}
Let $f:M\to N$ be a smooth map between two smooth manifolds. 
We say $f$ is 
\textbf{locally trivial at infinity at $y\in N$}
if there are a compact set $K$ in $M$ and 
an open neighborhood $D$ of $y$ 
such that
$
f:(M\setminus K)\cap f^{-1}(D)\to D
$
is a trivial fibration. 
We say $f$ is 
\textbf{locally trivial at infinity} if it is locally trivial at infinity at any $y\in N$. 
\end{defn}
Here are some examples of functions not locally trivial at infinity.
\begin{exam}[Broughton \cite{Broughton}]
Consider $f(x,y)=x(xy+1)$. The critical set $\Sigma(f)$ of $f$ is empty. 
For $t\ne0$, 
$$
f^{-1}(t)=\{y=(t-x)/x^2\}.
$$
We have $f^{-1}(t)=\BB{R}^*$, $f^{-1}(0)=\BB{R}\cup\BB{R}^*$ and 
$\chi_c(f^{-1}(t))=-2$, $\chi_c(f^{-1}(0))=-3$. 
So this example is not locally trivial at infinity on $t=0$. 
The level curves of $f$ with level $-1/2, 0, 1/2$ are shown in
 the figure.
The thick line shows the level $0$. 
\begin{center}
\setlength{\unitlength}{2.5pt}
\begin{picture}(50,50)(-25,-25)
\path(-25,4.8)(-24.9,4.82)(-24.8,4.85)(-24.7,4.87)(-24.6,4.89)(-24.5,
4.91)(-24.4,4.94)(-24.3,4.96)(-24.2,4.99)(-24.1,5.01)(-24,5.03)(-23.9,
5.06)(-23.8,5.08)(-23.7,5.11)(-23.6,5.14)(-23.5,5.16)(-23.4,5.19)(-23.3,
5.21)(-23.2,5.24)(-23.1,5.27)(-23,5.29)(-22.9,5.32)(-22.8,5.35)(-22.7,
5.38)(-22.6,5.4)(-22.5,5.43)(-22.4,5.46)(-22.3,5.49)(-22.2,5.52)(-22.1,
5.55)(-22,5.58)(-21.9,5.61)(-21.8,5.64)(-21.7,5.67)(-21.6,5.7)(-21.5,
5.73)(-21.4,5.76)(-21.3,5.8)(-21.2,5.83)(-21.1,5.86)(-21,5.9)(-20.9,
5.93)(-20.8,5.96)(-20.7,6)(-20.6,6.03)(-20.5,6.07)(-20.4,6.1)(-20.3,
6.14)(-20.2,6.18)(-20.1,6.21)(-20,6.25)(-19.9,6.29)(-19.8,6.33)(-19.7,
6.36)(-19.6,6.4)(-19.5,6.44)(-19.4,6.48)(-19.3,6.52)(-19.2,6.56)(-19.1,
6.61)(-19,6.65)(-18.9,6.69)(-18.8,6.73)(-18.7,6.78)(-18.6,6.82)(-18.5,
6.87)(-18.4,6.91)(-18.3,6.96)(-18.2,7)(-18.1,7.05)(-18,7.1)(-17.9,
7.15)(-17.8,7.2)(-17.7,7.25)(-17.6,7.3)(-17.5,7.35)(-17.4,7.4)(-17.3,
7.45)(-17.2,7.5)(-17.1,7.56)(-17,7.61)(-16.9,7.67)(-16.8,7.72)(-16.7,
7.78)(-16.6,7.84)(-16.5,7.9)(-16.4,7.96)(-16.3,8.02)(-16.2,8.08)(-16.1,
8.14)(-16,8.2)(-15.9,8.27)(-15.8,8.33)(-15.7,8.4)(-15.6,8.46)(-15.5,
8.53)(-15.4,8.6)(-15.3,8.67)(-15.2,
8.74)(-15.1,8.82)(-15,8.89)(-14.9,
8.96)(-14.8,9.04)(-14.7,9.12)(-14.6,
9.19)(-14.5,9.27)(-14.4,9.36)(-14.3,
9.44)(-14.2,9.52)(-14.1,9.61)(-14,
9.69)(-13.9,9.78)(-13.8,9.87)(-13.7,
9.96)(-13.6,10.06)(-13.5,10.15)(-13.4,
10.25)(-13.3,10.35)(-13.2,10.45)(-13.1,
10.55)(-13,10.65)(-12.9,10.76)(-12.8,
10.86)(-12.7,10.97)(-12.6,11.09)(-12.5,
11.2)(-12.4,11.32)(-12.3,11.43)(-12.2,
11.56)(-12.1,11.68)(-12,11.81)(-11.9,
11.93)(-11.8,12.07)(-11.7,12.2)(-11.6,
12.34)(-11.5,12.48)(-11.4,12.62)(-11.3,
12.77)(-11.2,12.91)(-11.1,13.07)(-11,
13.22)(-10.9,13.38)(-10.8,13.55)(-10.7,
13.71)(-10.6,13.88)(-10.5,14.06)(-10.4,
14.24)(-10.3,14.42)(-10.2,14.61)(-10.1,
14.8)(-10,15)(-9.9,15.2)(-9.8,
15.41)(-9.7,15.62)(-9.6,15.84)(-9.5,
16.07)(-9.4,16.3)(-9.3,16.53)(-9.2,
16.78)(-9.1,17.03)(-9,17.28)(-8.9,
17.55)(-8.8,17.82)(-8.7,18.1)(-8.6,
18.39)(-8.5,18.69)(-8.4,18.99)(-8.3,
19.31)(-8.2,19.63)(-8.1,19.97)(-8,
20.31)(-7.9,20.67)(-7.8,21.04)(-7.7,
21.42)(-7.6,21.81)(-7.5,22.22)(-7.4,
22.64)(-7.3,23.08)(-7.2,23.53)(-7.1,
24)(-7,24.49)(-6.9,24.99)
\path(2.9,24.97)(3,22.22)(3.1,19.77)(3.2,17.58)(3.3,
15.61)(3.4,13.84)(3.5,12.24)(3.6,10.8)(3.7,
9.5)(3.8,8.31)(3.9,7.23)(4,6.25)(4.1,
5.35)(4.2,4.54)(4.3,3.79)(4.4,3.1)(4.5,
2.47)(4.6,1.89)(4.7,1.36)(4.8,0.87)(4.9,
0.42)(5,
0)(5.1,-0.38)(5.2,-0.74)(5.3,-1.07)
(5.4,-1.37)(5.5,-1.65)(5.6,-1.91)(5.7,
-2.15)(5.8,-2.38)(5.9,-2.59)(6,-2.78)
(6.1,-2.96)(6.2,-3.12)(6.3,-3.28)(6.4,
-3.42)(6.5,-3.55)(6.6,-3.67)(6.7,-3.79)
(6.8,-3.89)(6.9,-3.99)(7,-4.08)(7.1,
-4.17)(7.2,-4.24)(7.3,-4.32)(7.4,-4.38)
(7.5,-4.44)(7.6,-4.5)(7.7,-4.55)(7.8,-4.6)
(7.9,-4.65)(8,-4.69)(8.1,-4.72)(8.2,
-4.76)(8.3,-4.79)(8.4,-4.82)(8.5,-4.84)
(8.6,-4.87)(8.7,-4.89)(8.8,-4.91)(8.9,
-4.92)(9,-4.94)(9.1,-4.95)(9.2,-4.96)
(9.3,-4.97)(9.4,-4.98)(9.5,-4.99)(9.6,
-4.99)(9.7,-5)(9.8,-5)(9.9,-5)(10,
-5)(10.1,-5)(10.2,-5)(10.3,-5)(10.4,
-4.99)(10.5,-4.99)(10.6,-4.98)(10.7,-4.98)
(10.8,-4.97)(10.9,-4.97)(11,-4.96)(11.1,
-4.95)(11.2,-4.94)(11.3,-4.93)(11.4,-4.92)
(11.5,-4.91)(11.6,-4.9)(11.7,-4.89)(11.8,
-4.88)(11.9,-4.87)(12,-4.86)(12.1,-4.85)
(12.2,-4.84)(12.3,-4.83)(12.4,-4.81)(12.5,
-4.8)(12.6,-4.79)(12.7,-4.77)(12.8,-4.76)
(12.9,-4.75)(13,-4.73)(13.1,-4.72)(13.2,
-4.71)(13.3,-4.69)(13.4,-4.68)(13.5,-4.66)
(13.6,-4.65)(13.7,-4.64)(13.8,-4.62)(13.9,
-4.61)(14,-4.59)(14.1,-4.58)(14.2,-4.56)
(14.3,-4.55)(14.4,-4.53)(14.5,-4.52)(14.6,
-4.5)(14.7,-4.49)(14.8,-4.47)(14.9,-4.46)
(15,-4.44)(15.1,-4.43)(15.2,-4.41)(15.3,-4.4)
(15.4,-4.39)(15.5,-4.37)(15.6,-4.36)(15.7,
-4.34)(15.8,-4.33)(15.9,-4.31)(16,-4.3)
(16.1,-4.28)(16.2,-4.27)(16.3,-4.25)(16.4,
-4.24)(16.5,-4.22)(16.6,-4.21)(16.7,-4.2)
(16.8,-4.18)(16.9,-4.17)(17,-4.15)(17.1,
-4.14)(17.2,-4.12)(17.3,-4.11)(17.4,-4.1)
(17.5,-4.08)(17.6,-4.07)(17.7,-4.05)(17.8,
-4.04)(17.9,-4.03)(18,-4.01)(18.1,-4)
(18.2,-3.99)(18.3,-3.97)(18.4,-3.96)(18.5,
-3.94)(18.6,-3.93)(18.7,-3.92)(18.8,-3.9)
(18.9,-3.89)(19,-3.88)(19.1,-3.87)(19.2,
-3.85)(19.3,-3.84)(19.4,-3.83)(19.5,-3.81)
(19.6,-3.8)(19.7,-3.79)(19.8,-3.78)(19.9,
-3.76)(20,-3.75)(20.1,-3.74)(20.2,-3.73)
(20.3,-3.71)(20.4,-3.7)(20.5,-3.69)(20.6,
-3.68)(20.7,-3.66)(20.8,-3.65)(20.9,-3.64)
(21,-3.63)(21.1,-3.62)(21.2,-3.6)(21.3,-3.59)
(21.4,-3.58)(21.5,-3.57)(21.6,-3.56)(21.7,
-3.55)(21.8,-3.54)(21.9,-3.52)(22,-3.51)
(22.1,-3.5)(22.2,-3.49)(22.3,-3.48)(22.4,
-3.47)(22.5,-3.46)(22.6,-3.45)(22.7,-3.43)
(22.8,-3.42)(22.9,-3.41)(23,-3.4)(23.1,-3.39)
(23.2,-3.38)(23.3,-3.37)(23.4,-3.36)(23.5,
-3.35)(23.6,-3.34)(23.7,-3.33)(23.8,-3.32)
(23.9,-3.31)(24,-3.3)(24.1,-3.29)(24.2,-3.28)
(24.3,-3.27)(24.4,-3.26)(24.5,-3.25)(24.6,
-3.24)(24.7,-3.23)(24.8,-3.22)(24.9,-3.21)
(25,-3.2)
\path
(-25,3.2)(-24.9,3.21)(-24.8,
3.22)(-24.7,3.23)(-24.6,3.24)(-24.5,
3.25)(-24.4,3.26)(-24.3,3.27)(-24.2,
3.28)(-24.1,3.29)(-24,3.3)(-23.9,
3.31)(-23.8,3.32)(-23.7,3.33)(-23.6,
3.34)(-23.5,3.35)(-23.4,3.36)(-23.3,
3.37)(-23.2,3.38)(-23.1,3.39)(-23,
3.4)(-22.9,3.41)(-22.8,3.42)(-22.7,
3.43)(-22.6,3.45)(-22.5,3.46)(-22.4,
3.47)(-22.3,3.48)(-22.2,3.49)(-22.1,
3.5)(-22,3.51)(-21.9,3.52)(-21.8,
3.54)(-21.7,3.55)(-21.6,3.56)(-21.5,
3.57)(-21.4,3.58)(-21.3,3.59)(-21.2,
3.6)(-21.1,3.62)(-21,3.63)(-20.9,
3.64)(-20.8,3.65)(-20.7,3.66)(-20.6,
3.68)(-20.5,3.69)(-20.4,3.7)(-20.3,
3.71)(-20.2,3.73)(-20.1,3.74)(-20,
3.75)(-19.9,3.76)(-19.8,3.78)(-19.7,
3.79)(-19.6,3.8)(-19.5,3.81)(-19.4,
3.83)(-19.3,3.84)(-19.2,3.85)(-19.1,
3.87)(-19,3.88)(-18.9,3.89)(-18.8,
3.9)(-18.7,3.92)(-18.6,3.93)(-18.5,
3.94)(-18.4,3.96)(-18.3,3.97)(-18.2,
3.99)(-18.1,4)(-18,4.01)(-17.9,
4.03)(-17.8,4.04)(-17.7,4.05)(-17.6,
4.07)(-17.5,4.08)(-17.4,4.1)(-17.3,
4.11)(-17.2,4.12)(-17.1,4.14)(-17,
4.15)(-16.9,4.17)(-16.8,4.18)(-16.7,
4.2)(-16.6,4.21)(-16.5,4.22)(-16.4,
4.24)(-16.3,4.25)(-16.2,4.27)(-16.1,
4.28)(-16,4.3)(-15.9,4.31)(-15.8,
4.33)(-15.7,4.34)(-15.6,4.36)(-15.5,
4.37)(-15.4,4.39)(-15.3,4.4)(-15.2,
4.41)(-15.1,4.43)(-15,4.44)(-14.9,
4.46)(-14.8,4.47)(-14.7,4.49)(-14.6,
4.5)(-14.5,4.52)(-14.4,4.53)(-14.3,
4.55)(-14.2,4.56)(-14.1,4.58)(-14,
4.59)(-13.9,4.61)(-13.8,4.62)(-13.7,
4.64)(-13.6,4.65)(-13.5,4.66)(-13.4,
4.68)(-13.3,4.69)(-13.2,4.71)(-13.1,
4.72)(-13,4.73)(-12.9,4.75)(-12.8,
4.76)(-12.7,4.77)(-12.6,4.79)(-12.5,
4.8)(-12.4,4.81)(-12.3,4.83)(-12.2,
4.84)(-12.1,4.85)(-12,4.86)(-11.9,
4.87)(-11.8,4.88)(-11.7,4.89)(-11.6,
4.9)(-11.5,4.91)(-11.4,4.92)(-11.3,
4.93)(-11.2,4.94)(-11.1,4.95)(-11,
4.96)(-10.9,4.97)(-10.8,4.97)(-10.7,
4.98)(-10.6,4.98)(-10.5,4.99)(-10.4,
4.99)(-10.3,5)(-10.2,5)(-10.1,
5)(-10,5)(-9.9,5)(-9.8,5)(-9.7,
5)(-9.6,4.99)(-9.5,4.99)(-9.4,
4.98)(-9.3,4.97)(-9.2,4.96)(-9.1,
4.95)(-9,4.94)(-8.9,4.92)(-8.8,
4.91)(-8.7,4.89)(-8.6,4.87)(-8.5,
4.84)(-8.4,4.82)(-8.3,4.79)(-8.2,
4.76)(-8.1,4.72)(-8,4.69)(-7.9,
4.65)(-7.8,4.6)(-7.7,4.55)(-7.6,
4.5)(-7.5,4.44)(-7.4,4.38)(-7.3,
4.32)(-7.2,4.24)(-7.1,4.17)(-7,
4.08)(-6.9,3.99)(-6.8,3.89)(-6.7,
3.79)(-6.6,3.67)(-6.5,3.55)(-6.4,
3.42)(-6.3,3.28)(-6.2,3.12)(-6.1,
2.96)(-6,2.78)(-5.9,2.59)(-5.8,
2.38)(-5.7,2.15)(-5.6,1.91)(-5.5,
1.65)(-5.4,1.37)(-5.3,1.07)(-5.2,
0.74)(-5.1,0.38)(-5,
0)(-4.9,-0.42)(-4.8,-0.87)(-4.7,
-1.36)(-4.6,-1.89)(-4.5,-2.47)(-4.4,
-3.1)(-4.3,-3.79)(-4.2,-4.54)(-4.1,
-5.35)(-4,-6.25)(-3.9,-7.23)(-3.8,
-8.31)(-3.7,-9.5)(-3.6,-10.8)(-3.5,
-12.24)(-3.4,-13.84)(-3.3,-15.61)(-3.2,
-17.58)(-3.1,-19.77)(-3,-22.22)(-2.9,-24.97)
\path(6.9,-24.99)(7,-24.49)(7.1,-24)(7.2,
-23.53)(7.3,-23.08)(7.4,-22.64)(7.5,-22.22)
(7.6,-21.81)(7.7,-21.42)(7.8,-21.04)(7.9,
-20.67)(8,-20.31)(8.1,-19.97)(8.2,-19.63)
(8.3,-19.31)(8.4,-18.99)(8.5,-18.69)(8.6,
-18.39)(8.7,-18.1)(8.8,-17.82)(8.9,-17.55)
(9,-17.28)(9.1,-17.03)(9.2,-16.78)(9.3,
-16.53)(9.4,-16.3)(9.5,-16.07)(9.6,-15.84)
(9.7,-15.62)(9.8,-15.41)(9.9,-15.2)(10,
-15)(10.1,-14.8)(10.2,-14.61)(10.3,-14.42)
(10.4,-14.24)(10.5,-14.06)(10.6,-13.88)(10.7,
-13.71)(10.8,-13.55)(10.9,-13.38)(11,
-13.22)(11.1,-13.07)(11.2,-12.91)(11.3,
-12.77)(11.4,-12.62)(11.5,-12.48)(11.6,
-12.34)(11.7,-12.2)(11.8,-12.07)(11.9,
-11.93)(12,-11.81)(12.1,-11.68)(12.2,
-11.56)(12.3,-11.43)(12.4,-11.32)(12.5,
-11.2)(12.6,-11.09)(12.7,-10.97)(12.8,
-10.86)(12.9,-10.76)(13,-10.65)(13.1,
-10.55)(13.2,-10.45)(13.3,-10.35)(13.4,
-10.25)(13.5,-10.15)(13.6,-10.06)(13.7,
-9.96)(13.8,-9.87)(13.9,-9.78)(14,-9.69)
(14.1,-9.61)(14.2,-9.52)(14.3,-9.44)(14.4,
-9.36)(14.5,-9.27)(14.6,-9.19)(14.7,-9.12)
(14.8,-9.04)(14.9,-8.96)(15,-8.89)(15.1,
-8.82)(15.2,-8.74)(15.3,-8.67)(15.4,-8.6)
(15.5,-8.53)(15.6,-8.46)(15.7,-8.4)(15.8,
-8.33)(15.9,-8.27)(16,-8.2)(16.1,-8.14)
(16.2,-8.08)(16.3,-8.02)(16.4,-7.96)(16.5,
-7.9)(16.6,-7.84)(16.7,-7.78)(16.8,-7.72)
(16.9,-7.67)(17,-7.61)(17.1,-7.56)(17.2,-7.5)
(17.3,-7.45)(17.4,-7.4)(17.5,-7.35)(17.6,
-7.3)(17.7,-7.25)(17.8,-7.2)(17.9,-7.15)
(18,-7.1)(18.1,-7.05)(18.2,-7)(18.3,
-6.96)(18.4,-6.91)(18.5,-6.87)(18.6,-6.82)
(18.7,-6.78)(18.8,-6.73)(18.9,-6.69)(19,
-6.65)(19.1,-6.61)(19.2,-6.56)(19.3,-6.52)
(19.4,-6.48)(19.5,-6.44)(19.6,-6.4)(19.7,
-6.36)(19.8,-6.33)(19.9,-6.29)(20,-6.25)
(20.1,-6.21)(20.2,-6.18)(20.3,-6.14)(20.4,
-6.1)(20.5,-6.07)(20.6,-6.03)(20.7,-6)
(20.8,-5.96)(20.9,-5.93)(21,-5.9)(21.1,-5.86)
(21.2,-5.83)(21.3,-5.8)(21.4,-5.76)(21.5,
-5.73)(21.6,-5.7)(21.7,-5.67)(21.8,-5.64)
(21.9,-5.61)(22,-5.58)(22.1,-5.55)(22.2,
-5.52)(22.3,-5.49)(22.4,-5.46)(22.5,-5.43)
(22.6,-5.4)(22.7,-5.38)(22.8,-5.35)(22.9,
-5.32)(23,-5.29)(23.1,-5.27)(23.2,-5.24)
(23.3,-5.21)(23.4,-5.19)(23.5,-5.16)(23.6,
-5.14)(23.7,-5.11)(23.8,-5.08)(23.9,-5.06)
(24,-5.03)(24.1,-5.01)(24.2,-4.99)(24.3,
-4.96)(24.4,-4.94)(24.5,-4.91)(24.6,-4.89)
(24.7,-4.87)(24.8,-4.85)(24.9,-4.82)(25,-4.8)
\thicklines
\path(0,-25)(0,25)
\path(-25,4)(-24.9,4.02)(-24.8,4.03)(-24.7,4.05)(-24.6,4.07)
(-24.5,4.08)(-24.4,4.1)(-24.3,4.12)(-24.2,4.13)(-24.1,4.15)
(-24,4.17)(-23.9,4.18)(-23.8,4.2)(-23.7,4.22)(-23.6,4.24)(-23.5,4.26)
(-23.4,4.27)(-23.3,4.29)(-23.2,4.31)(-23.1,4.33)(-23,4.35)(-22.9,4.37)
(-22.8,4.39)(-22.7,4.41)(-22.6,4.42)(-22.5,4.44)(-22.4,4.46)(-22.3,4.48)
(-22.2,4.5)(-22.1,4.52)(-22,4.55)(-21.9,4.57)(-21.8,4.59)(-21.7,4.61)
(-21.6,4.63)(-21.5,4.65)(-21.4,4.67)(-21.3,4.69)(-21.2,4.72)(-21.1,4.74)
(-21,4.76)(-20.9,4.78)(-20.8,4.81)(-20.7,4.83)(-20.6,
4.85)(-20.5,4.88)(-20.4,4.9)(-20.3,
4.93)(-20.2,4.95)(-20.1,4.98)(-20,
5)(-19.9,5.03)(-19.8,5.05)(-19.7,
5.08)(-19.6,5.1)(-19.5,5.13)(-19.4,
5.15)(-19.3,5.18)(-19.2,5.21)(-19.1,
5.24)(-19,5.26)(-18.9,5.29)(-18.8,
5.32)(-18.7,5.35)(-18.6,5.38)(-18.5,
5.41)(-18.4,5.43)(-18.3,5.46)(-18.2,
5.49)(-18.1,5.52)(-18,5.56)(-17.9,
5.59)(-17.8,5.62)(-17.7,5.65)(-17.6,
5.68)(-17.5,5.71)(-17.4,5.75)(-17.3,
5.78)(-17.2,5.81)(-17.1,5.85)(-17,
5.88)(-16.9,5.92)(-16.8,5.95)(-16.7,
5.99)(-16.6,6.02)(-16.5,6.06)(-16.4,
6.1)(-16.3,6.13)(-16.2,6.17)(-16.1,
6.21)(-16,6.25)(-15.9,6.29)(-15.8,
6.33)(-15.7,6.37)(-15.6,6.41)(-15.5,
6.45)(-15.4,6.49)(-15.3,6.54)(-15.2,
6.58)(-15.1,6.62)(-15,6.67)(-14.9,
6.71)(-14.8,6.76)(-14.7,6.8)(-14.6,
6.85)(-14.5,6.9)(-14.4,6.94)(-14.3,
6.99)(-14.2,7.04)(-14.1,7.09)(-14,
7.14)(-13.9,7.19)(-13.8,7.25)(-13.7,
7.3)(-13.6,7.35)(-13.5,7.41)(-13.4,
7.46)(-13.3,7.52)(-13.2,7.58)(-13.1,
7.63)(-13,7.69)(-12.9,7.75)(-12.8,
7.81)(-12.7,7.87)(-12.6,7.94)(-12.5,
8)(-12.4,8.06)(-12.3,8.13)(-12.2,
8.2)(-12.1,8.26)(-12,8.33)(-11.9,
8.4)(-11.8,8.47)(-11.7,8.55)(-11.6,
8.62)(-11.5,8.7)(-11.4,8.77)(-11.3,
8.85)(-11.2,8.93)(-11.1,9.01)(-11,
9.09)(-10.9,9.17)(-10.8,9.26)(-10.7,
9.35)(-10.6,9.43)(-10.5,9.52)(-10.4,
9.62)(-10.3,9.71)(-10.2,9.8)(-10.1,
9.9)(-10,10)(-9.9,10.1)(-9.8,
10.2)(-9.7,10.31)(-9.6,10.42)(-9.5,
10.53)(-9.4,10.64)(-9.3,10.75)(-9.2,
10.87)(-9.1,10.99)(-9,11.11)(-8.9,
11.24)(-8.8,11.36)(-8.7,11.49)(-8.6,
11.63)(-8.5,11.76)(-8.4,11.9)(-8.3,
12.05)(-8.2,12.2)(-8.1,12.35)(-8,
12.5)(-7.9,12.66)(-7.8,12.82)(-7.7,
12.99)(-7.6,13.16)(-7.5,13.33)(-7.4,
13.51)(-7.3,13.7)(-7.2,13.89)(-7.1,
14.08)(-7,14.29)(-6.9,14.49)(-6.8,
14.71)(-6.7,14.93)(-6.6,15.15)(-6.5,
15.38)(-6.4,15.62)(-6.3,15.87)(-6.2,
16.13)(-6.1,16.39)(-6,16.67)(-5.9,
16.95)(-5.8,17.24)(-5.7,17.54)(-5.6,
17.86)(-5.5,18.18)(-5.4,18.52)(-5.3,
18.87)(-5.2,19.23)(-5.1,19.61)(-5,
20)(-4.9,20.41)(-4.8,20.83)(-4.7,
21.28)(-4.6,21.74)(-4.5,22.22)(-4.4,22.73)
(-4.3,23.26)(-4.2,23.81)(-4.1,24.39)(-4,25)
\path(4,-25)(4.1,-24.39)
(4.2,-23.81)(4.3,-23.26)(4.4,-22.73)(4.5,-22.22)
(4.6,-21.74)(4.7,-21.28)(4.8,-20.83)(4.9,-20.41)(5,-20)
(5.1,-19.61)(5.2,-19.23)(5.3,-18.87)(5.4,-18.52)
(5.5,-18.18)(5.6,-17.86)(5.7,-17.54)(5.8,-17.24)
(5.9,-16.95)(6,-16.67)(6.1,-16.39)(6.2,-16.13)
(6.3,-15.87)(6.4,-15.62)(6.5,-15.38)(6.6,-15.15)
(6.7,-14.93)(6.8,-14.71)(6.9,-14.49)(7,-14.29)
(7.1,-14.08)(7.2,-13.89)(7.3,-13.7)(7.4,-13.51)
(7.5,-13.33)(7.6,-13.16)(7.7,-12.99)(7.8,-12.82)(7.9,-12.66)
(8,-12.5)(8.1,-12.35)(8.2,-12.2)(8.3,-12.05)(8.4,-11.9)
(8.5,-11.76)(8.6,-11.63)(8.7,-11.49)(8.8,-11.36)(8.9,-11.24)
(9,-11.11)(9.1,-10.99)(9.2,-10.87)(9.3,-10.75)
(9.4,-10.64)(9.5,-10.53)(9.6,-10.42)(9.7,-10.31)
(9.8,-10.2)(9.9,-10.1)(10,-10)(10.1,-9.9)(10.2,-9.8)
(10.3,-9.71)(10.4,-9.62)(10.5,-9.52)(10.6,-9.43)(10.7,-9.35)
(10.8,-9.26)(10.9,-9.17)(11,-9.09)(11.1,-9.01)(11.2,-8.93)
(11.3,-8.85)(11.4,-8.77)(11.5,-8.7)(11.6,-8.62)(11.7,-8.55)
(11.8,-8.47)(11.9,-8.4)(12,-8.33)(12.1,-8.26)(12.2,-8.2)
(12.3,-8.13)(12.4,-8.06)(12.5,-8)(12.6,-7.94)(12.7,-7.87)
(12.8,-7.81)(12.9,-7.75)(13,-7.69)(13.1,-7.63)(13.2,-7.58)
(13.3,-7.52)(13.4,-7.46)(13.5,-7.41)(13.6,-7.35)(13.7,-7.3)
(13.8,-7.25)(13.9,-7.19)(14,-7.14)(14.1,-7.09)(14.2,-7.04)
(14.3,-6.99)(14.4,-6.94)(14.5,-6.9)(14.6,-6.85)(14.7,-6.8)
(14.8,-6.76)(14.9,-6.71)(15,-6.67)(15.1,-6.62)(15.2,-6.58)
(15.3,-6.54)(15.4,-6.49)(15.5,-6.45)(15.6,-6.41)(15.7,-6.37)
(15.8,-6.33)(15.9,-6.29)(16,-6.25)(16.1,-6.21)(16.2,-6.17)
(16.3,-6.13)(16.4,-6.1)(16.5,-6.06)(16.6,-6.02)(16.7,-5.99)
(16.8,-5.95)(16.9,-5.92)(17,-5.88)(17.1,-5.85)(17.2,-5.81)
(17.3,-5.78)(17.4,-5.75)(17.5,-5.71)(17.6,-5.68)(17.7,-5.65)
(17.8,-5.62)(17.9,-5.59)(18,-5.56)(18.1,-5.52)(18.2,-5.49)
(18.3,-5.46)(18.4,-5.43)(18.5,-5.41)(18.6,-5.38)(18.7,-5.35)
(18.8,-5.32)(18.9,-5.29)(19,-5.26)(19.1,-5.24)(19.2,-5.21)
(19.3,-5.18)(19.4,-5.15)(19.5,-5.13)(19.6,-5.1)(19.7,-5.08)
(19.8,-5.05)(19.9,-5.03)(20,-5)(20.1,-4.98)(20.2,-4.95)
(20.3,-4.93)(20.4,-4.9)(20.5,-4.88)(20.6,-4.85)(20.7,-4.83)
(20.8,-4.81)(20.9,-4.78)(21,-4.76)(21.1,-4.74)(21.2,-4.72)
(21.3,-4.69)(21.4,-4.67)(21.5,-4.65)(21.6,-4.63)(21.7,-4.61)
(21.8,-4.59)(21.9,-4.57)(22,-4.55)(22.1,-4.52)(22.2,-4.5)
(22.3,-4.48)(22.4,-4.46)(22.5,-4.44)(22.6,-4.42)(22.7,-4.41)
(22.8,-4.39)(22.9,-4.37)(23,-4.35)(23.1,-4.33)(23.2,-4.31)
(23.3,-4.29)(23.4,-4.27)(23.5,-4.26)(23.6,-4.24)(23.7,-4.22)
(23.8,-4.2)(23.9,-4.18)(24,-4.17)(24.1,-4.15)(24.2,-4.13)
(24.3,-4.12)(24.4,-4.1)(24.5,-4.08)(24.6,-4.07)(24.7,-4.05)
(24.8,-4.03)(24.9,-4.02)(25,-4)
\end{picture}
\end{center}
\end{exam}
A map $f:\BB{R}^2\to\BB{R}$ with $\Sigma(f)=\emptyset$ may not be
surjective. 
M.~Shiota remarked that 
the map $\BB{R}^2\to\BB{R}$, $(x,y)\mapsto(x(xy+1)+1)^2+x^2$, 
has empty critical set, and is not
surjective. 
\begin{exam}[Tib\u ar-Zaharia {\cite[Example 3.2]{TibarZaharia}}]
Consider $f(x,y)=x^2y^2+2xy+(y^2-1)^2$. 
Then $\Sigma(f)=\{(0,0),(1,-1),(-1,1)\}$ and 
$f(0,0)=1$, $f(1,-1)=f(-1,1)=-1$. 
Since $f^{-1}(t)$ is two lines (resp. circles) if $0\le t<1$ (resp. $-1<t<0$), 
we have: 
$$
\chi_c(f^{-1}(t))=
\begin{cases}
-2&(0\le t<1)\\
0&(-1<t<0).
\end{cases}
$$
So this example is not locally trivial at infinity on $t=0$. 
The level curves of $f$ with level $-1,-1/2, 0, 1/2, 1, 3/2$ 
are shown in the figure.
The thick line shows the level $0$. 
\begin{center}
\setlength{\unitlength}{2.5pt}
\begin{picture}(50,50)(-25,-25)
\path(-25,0)(25,0)
\put(0,0){\circle*{1}}
\put(10,-10){\circle*{1}}
\put(-10,10){\circle*{1}}
\path
(-18.48,5.41)(-15.22,5.57)(-13.7,5.72)(-12.51,5.87)
(-11.51,6.02)(-10.64,6.18)(-9.87,6.33)(-9.18,6.48)
(-8.55,6.64)(-7.99,6.79)(-7.47,6.94)(-7,7.1)(-6.56,7.25)
(-6.16,7.4)(-5.8,7.55)(-5.46,7.71)(-5.15,7.86)(-4.87,8.01)
(-4.61,8.17)(-4.37,8.32)(-4.15,8.47)(-3.95,8.63)(-3.77,8.78)
(-3.61,8.93)(-3.46,9.09)(-3.34,9.24)(-3.22,9.39)(-3.13,9.54)
(-3.05,9.7)(-2.98,9.85)(-2.93,10)(-2.89,10.16)(-2.87,10.31)
(-2.86,10.46)(-2.87,10.62)(-2.89,10.77)(-2.93,10.92)
(-2.98,11.08)(-3.05,11.23)(-3.14,11.38)(-3.25,11.53)
(-3.38,11.69)(-3.53,11.84)(-3.71,11.99)(-3.92,12.15)
(-4.17,12.3)(-4.47,12.45)(-4.83,12.61)(-5.29,12.76)
(-5.93,12.91)(-7.65,13.07)(-7.65,13.07)(-9.55,12.91)
(-10.38,12.76)(-11.03,12.61)(-11.59,12.45)(-12.09,12.3)
(-12.54,12.15)(-12.96,11.99)(-13.36,11.84)(-13.73,11.69)
(-14.09,11.53)(-14.43,11.38)(-14.76,11.23)(-15.08,11.08)
(-15.38,10.92)(-15.68,10.77)(-15.97,10.62)(-16.25,10.46)
(-16.53,10.31)(-16.8,10.16)(-17.06,10)(-17.32,9.85)
(-17.58,9.7)(-17.83,9.54)(-18.07,9.39)(-18.31,9.24)
(-18.55,9.09)(-18.78,8.93)(-19.01,8.78)(-19.23,8.63)
(-19.45,8.47)(-19.67,8.32)(-19.88,8.17)(-20.09,8.01)
(-20.29,7.86)(-20.49,7.71)(-20.67,7.55)(-20.86,7.4)
(-21.03,7.25)(-21.19,7.1)(-21.34,6.94)(-21.47,6.79)
(-21.58,6.64)(-21.67,6.48)(-21.73,6.33)(-21.74,6.18)
(-21.69,6.02)(-21.56,5.87)(-21.28,5.72)(-20.72,5.57)
(-18.48,5.41)
\path
(-25,1.1)(-22.89,1.19)(-17.59,1.49)(-13.95,1.79)(-11.25,
2.09)(-9.16,2.39)(-7.47,2.68)(-6.08,2.98)(-4.91,3.28)(-3.91,
3.58)(-3.04,3.88)(-2.28,4.18)(-1.62,4.47)(-1.03,4.77)(-0.52,
5.07)(-0.06,5.37)(0.34,5.67)(0.7,5.97)(1.01,6.26)(1.29,6.56)
(1.52,6.86)(1.73,7.16)(1.9,7.46)(2.04,7.76)(2.15,8.05)(2.24,
8.35)(2.3,8.65)(2.33,8.95)(2.34,9.25)(2.32,
9.55)(2.28,9.84)(2.21,10.14)(2.12,10.44)(2,10.74)(1.86,
11.04)(1.69,11.34)(1.48,11.63)(1.25,11.93)(0.98,12.23)(0.67,
12.53)(0.32,12.83)(-0.09,13.13)(-0.55,13.42)(-1.1,13.72)(-1.76,
14.02)(-2.58,14.32)(-3.72,14.62)(-6.7,14.92)
(-6.7,14.92)(-9.96,14.62)(-11.38,14.32)(-12.5,14.02)(-13.47,
13.72)(-14.34,13.42)(-15.15,13.13)(-15.91,12.83)(-16.63,12.53)
(-17.33,12.23)(-18.01,11.93)(-18.67,11.63)(-19.33,11.34)(-19.98,
11.04)(-20.63,10.74)(-21.28,10.44)(-21.93,10.14)(-22.6,9.84)
(-23.27,9.55)(-23.96,9.25)(-24.68,8.95)(-25.41,8.65)
\path
(0,0)(0.31,0.31)(0.62,0.62)(0.92,0.93)(1.22,1.24)(1.52,1.55)
(1.8,1.86)(2.08,2.18)(2.34,2.49)(2.59,2.8)(2.83,3.11)
(3.06,3.42)(3.27,3.73)(3.47,4.04)(3.65,4.35)
(3.82,4.66)(3.97,4.97)(4.1,5.28)(4.22,5.59)
(4.32,5.9)(4.41,6.22)(4.48,6.53)(4.54,6.84)
(4.57,7.15)(4.6,7.46)(4.6,7.77)(4.59,8.08)
(4.56,8.39)(4.52,8.7)(4.46,9.01)(4.38,9.32)
(4.28,9.63)(4.16,9.94)(4.03,10.25)(3.88,10.57)
(3.7,10.88)(3.5,11.19)(3.28,11.5)(3.03,11.81)
(2.76,12.12)(2.45,12.43)(2.11,12.74)(1.74,13.05)
(1.32,13.36)(0.84,13.67)(0.3,13.98)(-0.32,14.29)
(-1.05,14.61)(-1.97,14.92)(-3.21,15.23)(-6.44,15.54)
(-6.44,15.54)(-9.92,15.23)(-11.44,14.92)(-12.64,14.61)
(-13.67,14.29)(-14.61,13.98)(-15.47,13.67)(-16.28,13.36)
(-17.06,13.05)(-17.81,12.74)(-18.54,12.43)(-19.26,12.12)
(-19.97,11.81)(-20.67,11.5)(-21.38,11.19)(-22.09,10.88)
(-22.8,10.57)(-23.53,10.25)(-24.28,9.94)(-25.04,9.63)(-25.83,9.32)
\path
(24.1,0.96)(18.52,1.29)(15.27,1.61)(13.18,1.93)(11.75,2.25)
(10.73,2.57)(9.98,2.89)(9.41,3.21)(8.97,3.53)(8.62,3.86)
(8.34,4.18)(8.12,4.5)(7.93,4.82)(7.77,5.14)(7.63,5.46)(7.51,5.78)
(7.39,6.11)(7.28,6.43)(7.18,6.75)(7.07,7.07)(6.96,7.39)
(6.85,7.71)(6.73,8.03)(6.61,8.35)(6.47,8.68)(6.33,9)(6.18,9.32)
(6.01,9.64)(5.83,9.96)(5.64,10.28)(5.43,10.6)(5.21,10.92)
(4.97,11.25)(4.71,11.57)(4.43,11.89)(4.12,12.21)(3.79,12.53)
(3.43,12.85)(3.03,13.17)(2.6,13.5)(2.13,13.82)(1.6,14.14)
(1,14.46)(0.32,14.78)(-0.48,15.1)(-1.47,15.42)(-2.8,15.74)
(-6.22,16.07)(-6.22,16.07)(-9.9,15.74)(-11.5,15.42)(-12.76,15.1)
(-13.85,14.78)(-14.83,14.46)(-15.74,14.14)(-16.6,13.82)(-17.42,13.5)
(-18.21,13.17)(-18.99,12.85)(-19.75,12.53)(-20.5,12.21)
(-21.25,11.89)(-22,11.57)(-22.75,11.25)(-23.52,10.92)(-24.3,10.6)
(-25.09,10.28)(-25.91,9.96)
\path
(18.48,-5.41)(15.22,-5.57)(13.7,-5.72)(12.51,-5.87)
(11.51,-6.02)(10.64,-6.18)(9.87,-6.33)(9.18,-6.48)
(8.55,-6.64)(7.99,-6.79)(7.47,-6.94)(7,-7.1)(6.56,-7.25)
(6.16,-7.4)(5.8,-7.55)(5.46,-7.71)(5.15,-7.86)(4.87,-8.01)
(4.61,-8.17)(4.37,-8.32)(4.15,-8.47)(3.95,-8.63)(3.77,-8.78)
(3.61,-8.93)(3.46,-9.09)(3.34,-9.24)(3.22,-9.39)(3.13,-9.54)
(3.05,-9.7)(2.98,-9.85)(2.93,-10)(2.89,-10.16)(2.87,-10.31)
(2.86,-10.46)(2.87,-10.62)(2.89,-10.77)(2.93,-10.92)
(2.98,-11.08)(3.05,-11.23)(3.14,-11.38)(3.25,-11.53)
(3.38,-11.69)(3.53,-11.84)(3.71,-11.99)(3.92,-12.15)
(4.17,-12.3)(4.47,-12.45)(4.83,-12.61)(5.29,-12.76)
(5.93,-12.91)(7.65,-13.07)(7.65,-13.07)(9.55,-12.91)
(10.38,-12.76)(11.03,-12.61)(11.59,-12.45)(12.09,-12.3)
(12.54,-12.15)(12.96,-11.99)(13.36,-11.84)(13.73,-11.69)
(14.09,-11.53)(14.43,-11.38)(14.76,-11.23)(15.08,-11.08)
(15.38,-10.92)(15.68,-10.77)(15.97,-10.62)(16.25,-10.46)
(16.53,-10.31)(16.8,-10.16)(17.06,-10)(17.32,-9.85)
(17.58,-9.7)(17.83,-9.54)(18.07,-9.39)(18.31,-9.24)
(18.55,-9.09)(18.78,-8.93)(19.01,-8.78)(19.23,-8.63)
(19.45,-8.47)(19.67,-8.32)(19.88,-8.17)(20.09,-8.01)
(20.29,-7.86)(20.49,-7.71)(20.67,-7.55)(20.86,-7.4)
(21.03,-7.25)(21.19,-7.1)(21.34,-6.94)(21.47,-6.79)
(21.58,-6.64)(21.67,-6.48)(21.73,-6.33)(21.74,-6.18)
(21.69,-6.02)(21.56,-5.87)(21.28,-5.72)(20.72,-5.57)
(18.48,-5.41)
\path
(25,-1.1)(22.89,-1.19)(17.59,-1.49)(13.95,-1.79)(11.25,-
2.09)(9.16,-2.39)(7.47,-2.68)(6.08,-2.98)(4.91,-3.28)(3.91,-
3.58)(3.04,-3.88)(2.28,-4.18)(1.62,-4.47)(1.03,-4.77)(0.52,-
5.07)(0.06,-5.37)(-0.34,-5.67)(-0.7,-5.97)(-1.01,-6.26)(-1.29,-6.56)
(-1.52,-6.86)(-1.73,-7.16)(-1.9,-7.46)(-2.04,-7.76)(-2.15,-8.05)(-2.24,-
8.35)(-2.3,-8.65)(-2.33,-8.95)(-2.34,-9.25)(-2.32,-
9.55)(-2.28,-9.84)(-2.21,-10.14)(-2.12,-10.44)(-2,-10.74)(-1.86,-
11.04)(-1.69,-11.34)(-1.48,-11.63)(-1.25,-11.93)(-0.98,-12.23)(-0.67,-
12.53)(-0.32,-12.83)(0.09,-13.13)(0.55,-13.42)(1.1,-13.72)(1.76,-
14.02)(2.58,-14.32)(3.72,-14.62)(6.7,-14.92)
(6.7,-14.92)(9.96,-14.62)(11.38,-14.32)(12.5,-14.02)(13.47,-
13.72)(14.34,-13.42)(15.15,-13.13)(15.91,-12.83)(16.63,-12.53)
(17.33,-12.23)(18.01,-11.93)(18.67,-11.63)(19.33,-11.34)(19.98,-
11.04)(20.63,-10.74)(21.28,-10.44)(21.93,-10.14)(22.6,-9.84)
(23.27,-9.55)(23.96,-9.25)(24.68,-8.95)(25.41,-8.65)
\path
(0,-0)(-0.31,-0.31)(-0.62,-0.62)(-0.92,-0.93)(-1.22,-1.24)(-1.52,-1.55)
(-1.8,-1.86)(-2.08,-2.18)(-2.34,-2.49)(-2.59,-2.8)(-2.83,-3.11)
(-3.06,-3.42)(-3.27,-3.73)(-3.47,-4.04)(-3.65,-4.35)
(-3.82,-4.66)(-3.97,-4.97)(-4.1,-5.28)(-4.22,-5.59)
(-4.32,-5.9)(-4.41,-6.22)(-4.48,-6.53)(-4.54,-6.84)
(-4.57,-7.15)(-4.6,-7.46)(-4.6,-7.77)(-4.59,-8.08)
(-4.56,-8.39)(-4.52,-8.7)(-4.46,-9.01)(-4.38,-9.32)
(-4.28,-9.63)(-4.16,-9.94)(-4.03,-10.25)(-3.88,-10.57)
(-3.7,-10.88)(-3.5,-11.19)(-3.28,-11.5)(-3.03,-11.81)
(-2.76,-12.12)(-2.45,-12.43)(-2.11,-12.74)(-1.74,-13.05)
(-1.32,-13.36)(-0.84,-13.67)(-0.3,-13.98)(0.32,-14.29)
(1.05,-14.61)(1.97,-14.92)(3.21,-15.23)(6.44,-15.54)
(6.44,-15.54)(9.92,-15.23)(11.44,-14.92)(12.64,-14.61)
(13.67,-14.29)(14.61,-13.98)(15.47,-13.67)(16.28,-13.36)
(17.06,-13.05)(17.81,-12.74)(18.54,-12.43)(19.26,-12.12)
(19.97,-11.81)(20.67,-11.5)(21.38,-11.19)(22.09,-10.88)
(22.8,-10.57)(23.53,-10.25)(24.28,-9.94)(25.04,-9.63)(25.83,-9.32)
\path
(-24.1,-0.96)(-18.52,-1.29)(-15.27,-1.61)(-13.18,-1.93)(-11.75,-2.25)
(-10.73,-2.57)(-9.98,-2.89)(-9.41,-3.21)(-8.97,-3.53)(-8.62,-3.86)
(-8.34,-4.18)(-8.12,-4.5)(-7.93,-4.82)(-7.77,-5.14)(-7.63,-5.46)(-7.51,-5.78)
(-7.39,-6.11)(-7.28,-6.43)(-7.18,-6.75)(-7.07,-7.07)(-6.96,-7.39)
(-6.85,-7.71)(-6.73,-8.03)(-6.61,-8.35)(-6.47,-8.68)(-6.33,-9)(-6.18,-9.32)
(-6.01,-9.64)(-5.83,-9.96)(-5.64,-10.28)(-5.43,-10.6)(-5.21,-10.92)
(-4.97,-11.25)(-4.71,-11.57)(-4.43,-11.89)(-4.12,-12.21)(-3.79,-12.53)
(-3.43,-12.85)(-3.03,-13.17)(-2.6,-13.5)(-2.13,-13.82)(-1.6,-14.14)
(-1,-14.46)(-0.32,-14.78)(0.48,-15.1)(1.47,-15.42)(2.8,-15.74)
(6.22,-16.07)(6.22,-16.07)(9.9,-15.74)(11.5,-15.42)(12.76,-15.1)
(13.85,-14.78)(14.83,-14.46)(15.74,-14.14)(16.6,-13.82)(17.42,-13.5)
(18.21,-13.17)(18.99,-12.85)(19.75,-12.53)(20.5,-12.21)
(21.25,-11.89)(22,-11.57)(22.75,-11.25)(23.52,-10.92)(24.3,-10.6)
(25.09,-10.28)(25.91,-9.96)
{\thicklines
\path(-25.37,2.55)(-21.5,2.83)(-18.35,3.11)(-15.73,3.39)(-13.54,3.68)
(-11.68,3.96)(-10.08,4.24)(-8.7,4.53)(-7.5,4.81)(-6.45,5.09)
(-5.53,5.37)(-4.72,5.66)(-4,5.94)(-3.37,6.22)(-2.81,6.51)(-2.32,6.79)
(-1.89,7.07)(-1.52,7.35)(-1.19,7.64)(-0.91,7.92)(-0.67,8.2)(-0.47,8.49)
(-0.31,8.77)(-0.18,9.05)(-0.09,9.33)(-0.03,9.62)(0,9.9)
(-0.01,10.18)(-0.04,10.47)(-0.11,10.75)(-0.22,11.03)(-0.35,11.31)
(-0.53,11.6)(-0.74,11.88)(-1.01,12.16)
(-1.32,12.45)(-1.69,12.73)
(-2.14,13.01)(-2.7,13.29)(-3.41,13.58)(-4.4,13.86)(-7.07,14.14)
(-7.07,14.14)(-10.03,13.86)(-11.33,13.58)(-12.35,13.29)
(-13.23,13.01)(-14.02,12.73)(-14.75,12.45)(-15.44,12.16)(-16.09,11.88)
(-16.72,11.6)(-17.32,11.31)(-17.92,11.03)(-18.5,10.75)(-19.07,10.47)
(-19.64,10.18)(-20.2,9.9)(-20.77,9.62)(-21.34,9.33)
(-21.92,9.05)(-22.5,8.77)(-23.1,8.49)(-23.71,8.2)(-24.34,7.92)(-25,7.64)(-25.68,7.35)
\path(25.37,-2.55)(21.5,-2.83)(18.35,-3.11)(15.73,-3.39)(13.54,-3.68)
(11.68,-3.96)(10.08,-4.24)(8.7,-4.53)(7.5,-4.81)(6.45,-5.09)
(5.53,-5.37)(4.72,-5.66)(4,-5.94)(3.37,-6.22)(2.81,-6.51)(2.32,-6.79)
(1.89,-7.07)(1.52,-7.35)(1.19,-7.64)(0.91,-7.92)(0.67,-8.2)(0.47,-8.49)
(0.31,-8.77)(0.18,-9.05)(0.09,-9.33)(0.03,-9.62)(0,-9.9)
(0.01,-10.18)(0.04,-10.47)(0.11,-10.75)(0.22,-11.03)(0.35,-11.31)
(0.53,-11.6)(0.74,-11.88)(1.01,-12.16)
(1.32,-12.45)(1.69,-12.73)
(2.14,-13.01)(2.7,-13.29)(3.41,-13.58)(4.4,-13.86)(7.07,-14.14)
(7.07,-14.14)(10.03,-13.86)(11.33,-13.58)(12.35,-13.29)
(13.23,-13.01)(14.02,-12.73)(14.75,-12.45)(15.44,-12.16)(16.09,-11.88)
(16.72,-11.6)(17.32,-11.31)(17.92,-11.03)(18.5,-10.75)(19.07,-10.47)
(19.64,-10.18)(20.2,-9.9)(20.77,-9.62)(21.34,-9.33)
(21.92,-9.05)(22.5,-8.77)(23.1,-8.49)(23.71,-8.2)(24.34,-7.92)
(25,-7.64)(25.68,-7.35)}
\end{picture}
\end{center}
\end{exam}

\lsection{Euler characteristics of local generic fibers}
\label{Euler characteristics of local generic fibers}
In this section, we present a general method for the computations of the Euler characteristic of the Milnor fibers of a stable map-germ. We start with a lemma. 

  
\begin{lem}
Let $Y$ be a manifold and let $X$ be a set defined by: 
$$
X=\{(x,y)\in\BB{R}^p\times Y:
{x_1}^2+\dots+{x_p}^2=g(y)
\}
$$
where $g(y)$ is a smooth positive function. 
Then $X$ is a smooth manifold and:
$$
\chi_c(X)=\chi(S^{p-1})\chi_c(Y)
=(1-(-1)^p)\chi_c(Y).
$$
\end{lem}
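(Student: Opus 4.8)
The plan is to realise $X$ as a sphere bundle over $Y$, trivialise it by rescaling, and then feed this into Viro's calculus. First I would verify that $X$ is a smooth manifold. Consider $F\colon\BB{R}^p\times Y\to\BB{R}$, $F(x,y)=x_1^2+\dots+x_p^2-g(y)$, so that $X=F^{-1}(0)$. At any point $(x,y)\in X$ one has $x_1^2+\dots+x_p^2=g(y)>0$, hence $x\neq0$ and $\partial F/\partial x_i=2x_i\neq0$ for some $i$; thus $dF$ is nowhere zero on $X$, so $0$ is a regular value of $F$ and $X$ is a smooth hypersurface of $\BB{R}^p\times Y$ of dimension $p-1+\dim Y$.

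Next I would trivialise the projection $\pi\colon X\to Y$, $(x,y)\mapsto y$. Since $g>0$ is smooth, the map
\[
\Phi\colon X\longrightarrow S^{p-1}\times Y,\qquad
\Phi(x,y)=\bigl(g(y)^{-1/2}\,x,\;y\bigr)
\]
is well defined and smooth, with smooth inverse $(u,y)\mapsto\bigl(g(y)^{1/2}u,\,y\bigr)$; hence $\Phi$ is a diffeomorphism and $X\cong S^{p-1}\times Y$. In particular $\pi$ is a locally trivial (indeed trivial) fibration with fibre $S^{p-1}$.

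It then remains to evaluate $\chi_c(S^{p-1}\times Y)$. I would apply Corollary~\ref{Fubini3} (a consequence of Fubini's theorem, Lemma~\ref{Fubini}) to $\pi$ with $\varphi=\BS{1}_{X}\in\Cons(X)$: the pushforward $\pi_*\varphi$ is the constant $d=\int_{\pi^{-1}(y)}d\mu_X=\mu_X(S^{p-1})=\chi_c(S^{p-1})$, so that $\chi_c(X)=d\,\mu_Y(Y)=\chi_c(S^{p-1})\,\chi_c(Y)$; this is just the multiplicativity of $\chi_c$ on products, one of the defining properties of $\mu_X=\chi_c$ (Example~\ref{EulerChar}). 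Finally $S^{p-1}$ is compact, so $\chi_c(S^{p-1})=\chi(S^{p-1})=1+(-1)^{p-1}=1-(-1)^p$, which yields the claimed equality.

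There is no deep difficulty here; the one point to watch is that $\chi_c$ is \emph{not} a homotopy invariant (for instance $\chi_c(\BB{R}^p)=(-1)^p$), so one must produce an honest diffeomorphism $X\cong S^{p-1}\times Y$ — as the explicit rescaling $\Phi$ does — rather than merely retract $X$ onto a zero section. One should also make sure the product formula for $\chi_c$ is legitimately available; it is, being precisely the ``fits to'' hypothesis of Section~\ref{Geometric integration} specialised to the trivial fibration, and it can alternatively be deduced from the Künneth formula for cohomology with compact supports.
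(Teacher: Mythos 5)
Your proof is correct and follows essentially the same route as the paper: both check smoothness of $X$ and then exploit the projection $X\to Y$, $(x,y)\mapsto y$, as a fibration with fibre $S^{p-1}$, concluding by multiplicativity of $\chi_c$. Your explicit rescaling diffeomorphism $X\cong S^{p-1}\times Y$ and the regular-value check merely spell out details the paper leaves implicit.
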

\begin{proof} 
It is easy to check that $X$ is a manifold. To obtain the equality,
consider the map: 
$$
X\to Y,\  (x,y)\mapsto y. 
$$
This is a locally trivial fibration whose fiber is $S^{p-1}$. 
\end{proof}
\begin{exam}\label{EX1}
Let $X$ be the set defined by:
$$
X=\{(x,y)\in\BB{R}^p\times\BB{R}^q:
{x_1}^2+\cdots+{x_p}^2={y_1}^2+\cdots+{y_q}^2+1\}.
$$
Since $X\to\BB{R}^q$, $(x,y)\mapsto y$, is a locally trivial fibration whose 
 fiber is $S^{p-1}$, 
we have: 
$$
\chi_c(X)
=\chi_c(S^{p-1})\chi_c(\BB{R}^q)
=(1-(-1)^p)(-1)^q=(-1)^q-(-1)^{p+q}.
$$
\end{exam}
\begin{exam}\label{EX2}
Let $X$ be the set defined by: 
$$
X=\{(x,y)\in\BB{R}^p\times\BB{R}^q:
{x_1}^2+\cdots+{x_p}^2={y_1}^2+\cdots+{y_q}^2\}.
$$
Since $X\setminus\{0\}\to\BB{R}^q\setminus\{0\}$, $(x,y)\mapsto y$, is a locally trivial fibration whose 
 fiber is $S^{p-1}$, we have: 
\begin{align*}
\chi_c(X)=&\chi_c(\{0\})+\chi_c(S^{p-1})\chi_c(\BB{R}^q\setminus\{0\})\\
=&1+(1-(-1)^p)((-1)^q-1)\\
=&(-1)^p+(-1)^q-(-1)^{p+q}.
\end{align*}
\end{exam}

Next we will apply this lemma and these examples to the computation of Euler characteristics of local nearby fibers of stable map-germs. The general setting is the following.
Let $\widetilde{B}$  be a small open ball in $\BB{R}^n$ centered at $0$ 
and let $B'$ be a small open ball in $\BB{R}^{a+b}$ centered at $0$. 
We consider a map $f$ defined by:
\begin{equation}\label{PreNormalForm}
f:\widetilde{B}\times B'\times\BB{R}^h\to\BB{R}\times\BB{R}^j\times\BB{R}^h,\qquad
(x,z,c)\mapsto(g(x;c)+Q(z),g'(x;c),c)
\end{equation}
where 
$Q(z)={z_1}^2+\cdots+{z_a}^2-{z_{a+1}}^2-\cdots-{z_{a+b}}^2$. 
Remember that stable-germs are versal unfoldings, deleting constant terms,  
of a map-germ $x\mapsto(g(x;0),g'(x;0))$, called the genotype, 
and can be written in this form. 
(See \cite[Part I, 9.]{Arnold-GuseinZade-Varchenko})

We want to compute the Euler characteristic of a local generic fiber around the point $(0,0,0)$, namely the fiber $f^{-1}(\varepsilon,\varepsilon',c)$ for small $\varepsilon$ and $\varepsilon'$.
First we remark that $f^{-1}(\varepsilon,\varepsilon',c)$ is diffeomorphic to: 
$$
F=\{(x,z)\in B\times B':g(x;c)+Q(z)=\varepsilon\},
$$
where $B$ is the nonsingular subset of $\widetilde{B}$ 
defined by $g'(x;c)=\varepsilon'$.  
Note that dim $F=n-j+a+b-1$ and dim $B=n-j$.
\begin{lem}
We have: 
$$
\chi_c(F)=
\begin{cases}
\chi_c(B_0)&
\textrm{$a$ even, $b$ even }\\
\chi_c(B)+\chi_c(B_+)-\chi_c(B_-)
&\textrm{$a$  even, $b$ odd \ }\\
\chi_c(B)-\chi_c(B_+)+\chi_c(B_-)
&\textrm{$a$ odd,  $b$  even }\\
-2\chi_c(B)-\chi_c(B_0)
&\textrm{$a$  odd, $b$ odd \ }
\end{cases}
$$
where: 
\begin{align*}
B_+=&\{x\in B \ \vert \ g(x;c)>\varepsilon\},\\
B_0=&\{x\in B \ \vert \ g(x;c)=\varepsilon\},\\
B_-=&\{x\in B \ \vert \ g(x;c)<\varepsilon\}. 
\end{align*}
\end{lem}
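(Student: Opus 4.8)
The plan is to apply the Fubini formula (Lemma \ref{Fubini}) with $\mu=\chi_c$ to the characteristic function $\BS{1}_F$ and to the projection $\pi\colon F\to B$, $(x,z)\mapsto x$. Then $\int_F\BS{1}_F\,d\chi_c=\chi_c(F)$, while $\pi_*\BS{1}_F(x)=\chi_c(\pi^{-1}(x))$, so the whole argument reduces to computing this fibrewise Euler characteristic and integrating it over $B$.

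First I would observe that $\pi^{-1}(x)=\{z\in B':Q(z)=\varepsilon-g(x;c)\}$, whose homeomorphism type depends only on the sign of $s:=\varepsilon-g(x;c)$; hence $\pi_*\BS{1}_F$ is constant, say equal to $c_+$, $c_0$, $c_-$, on $B_+$, $B_0$, $B_-$ respectively. Moreover $\pi$ restricted over $B_0$ is literally the product projection $B_0\times\{z\in B':Q(z)=0\}\to B_0$, and over $B_+$ and over $B_-$ it is a locally trivial fibration: near a point where $s$ keeps a fixed sign one trivializes the family $\{z\in B':Q(z)=s\}$ by scaling the $z$-coordinates. In particular $(\mathcal S(F),\mathcal S(B))$ fits $\pi$, so Lemma \ref{Fubini} applies and, by the very definition of the integral, gives
$$\chi_c(F)=c_+\,\chi_c(B_+)+c_0\,\chi_c(B_0)+c_-\,\chi_c(B_-).$$

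Next I would evaluate $c_+$, $c_0$, $c_-$. For $x\in B_-$ one has $s>0$, so $\pi^{-1}(x)=\{z_1^2+\dots+z_a^2=z_{a+1}^2+\dots+z_{a+b}^2+s\}\cap B'$; the same fibration over the $(z_{a+1},\dots,z_{a+b})$-variables with fibre $S^{a-1}$ as in Example \ref{EX1} (the ball $B'$ changing nothing) gives $c_-=(-1)^b-(-1)^{a+b}$. Interchanging the roles of the two groups of variables gives $c_+=(-1)^a-(-1)^{a+b}$ for $x\in B_+$, and for $x\in B_0$ the fibre is the cone $\{Q=0\}\cap B'$, so the computation of Example \ref{EX2} gives $c_0=(-1)^a+(-1)^b-(-1)^{a+b}$. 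Substituting,
$$\chi_c(F)=\bigl((-1)^a-(-1)^{a+b}\bigr)\chi_c(B_+)+\bigl((-1)^a+(-1)^b-(-1)^{a+b}\bigr)\chi_c(B_0)+\bigl((-1)^b-(-1)^{a+b}\bigr)\chi_c(B_-).$$

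Finally, I would run through the four parities of $(a,b)$ and simplify, using $\chi_c(B)=\chi_c(B_+)+\chi_c(B_0)+\chi_c(B_-)$; e.g. for $a$ even and $b$ odd the coefficients become $(2,1,0)$ and $2\chi_c(B_+)+\chi_c(B_0)=\chi_c(B)+\chi_c(B_+)-\chi_c(B_-)$, and similarly in the other three cases. The only step needing genuine care is checking that $\pi$ is locally trivial over $B_+$, $B_0$ and $B_-$ (so that $(\mathcal S(F),\mathcal S(B))$ fits $\pi$) and that restricting the $z$-variables to the small ball $B'$ does not alter the fibre Euler characteristics; once this is in place the proof is a substitution into Examples \ref{EX1} and \ref{EX2} followed by a short case-by-case simplification.
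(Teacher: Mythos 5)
Your proposal is correct and follows essentially the same route as the paper's own proof: both project $F$ onto $B$, read off the fibre Euler characteristics over $B_+$, $B_0$, $B_-$ from Examples \ref{EX1} and \ref{EX2} (obtaining the same values $(-1)^a-(-1)^{a+b}$, $(-1)^a+(-1)^b-(-1)^{a+b}$, $(-1)^b-(-1)^{a+b}$), and then combine them case by case using $\chi_c(B_+)+\chi_c(B_0)+\chi_c(B_-)=\chi_c(B)$. The only inessential difference is in the justification of local triviality over the three pieces (the paper notes the projection is a fold map with singular set over $B_0$ and invokes the Thom--Mather lemma there, while you use a product/scaling trivialization and cite Lemma \ref{Fubini} explicitly), which does not change the substance of the argument.
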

Remark that $B$, $B_+$, $B_-$ and $B_0$ depend on 
$\varepsilon$, $\varepsilon'$, $c$ and it would be better to denote them by 
$B(\varepsilon,\varepsilon',c)$,
$B_+(\varepsilon,\varepsilon',c)$,
$B_-(\varepsilon,\varepsilon',c)$ and $B_0(\varepsilon,\varepsilon',c)$ respectively. 
But we keep the notation in the lemma for shortness.   
\begin{proof}
Consider the map:  
$
\varphi:F\to B, \ (y,z)\mapsto y
$.
The singular set of $\varphi$ is described by: 
$$
\rank
\begin{pmatrix}
g_{y_i}&Q_{z_k}\\
1&0
\end{pmatrix}<m+1
\qquad(\textrm{i.e., }\ Q_{z_1}=\cdots=Q_{z_{a+b}}=0),
$$
where $m=n-j$ and $(y_1,\dots,y_m)$ denotes a local coordinates system for $B$. 
Note that, with the standard notation, $\Sigma(\varphi)=\Sigma^{a+b}(\varphi)$. 
Now we consider the singular set of $\varphi|_{\Sigma(\varphi)}$, 
which is defined by:
$$
\rank
\begin{pmatrix}
g_{y_i}&Q_{z_k}\\
1&0\\
0&Q_{z_iz_j}
\end{pmatrix}<m+a+b. 
$$
Since $Q$ is quadratic, we have $\Sigma^{a+b,1}(\varphi)= \emptyset $ which means that $\varphi$ is a fold map. Moreover $\Sigma(\varphi)$ is included in $\varphi^{-1}(B_0)$. Hence 
$\varphi_{\vert \varphi^{-1}(B^+)}$ and $\varphi_{\vert \varphi^{-1}(B^-)}$ are locally trivial. Furthermore the decomposition $\varphi^{-1}(B_0) =\Sigma(\varphi) \sqcup (\varphi^{-1}(B_0) \setminus   \Sigma(\varphi))$ gives a Whitney stratification of $\varphi^{-1}(B_0)$ and $\varphi_{\vert \Sigma(\varphi)}$ and $\varphi_{\vert \varphi^{-1}(B_0) \setminus   \Sigma(\varphi)}$ have no critical point so $\varphi_{\vert \varphi^{-1}(B_0)}$ is also trivial by the Thom-Mather lemma. 

Using Examples \ref{EX1} and \ref{EX2}, we remark the following:
$$
\chi_c(\varphi^{-1}(x))=
\begin{cases}
(-1)^b-(-1)^{a+b}& x\in B_-\\
(-1)^a-(-1)^{a+b}& x\in B_+\\
(-1)^a+(-1)^b-(-1)^{a+b}
&x\in B_0
\end{cases}
$$
In other words, $\chi_c(\varphi^{-1}(x))$ is given by 
the following table: 
\begin{center}
\begin{tabular}{|c|c|c|c|}
\hline
&$x\in B_+$&$x\in B_-$&$x\in B_0$\\
\hline
$a$ even, $b$ even &0&0&1\\
$a$ even, $b$ odd \ &$2$&0&$1$\\
$a$ odd, \ $b$ even &0&$2$&$1$\\
$a$ odd, \ $b$ odd \ &$-2$&$-2$&$-3$\\
\hline
\end{tabular}
\end{center}
Therefore, using the local trivialities mentioned above, 
we conclude as follows: 
$$
\chi_c(F)=
\begin{cases}
\chi_c(B_0)&
\textrm{$a$  even, $b$ even }\\
2\chi_c(B_+)+\chi_c(B_0)=\chi_c(B)+\chi_c(B_+)-\chi_c(B_-)
&\textrm{$a$ even, $b$  odd \ }\\
2\chi_c(B_-)+\chi_c(B_0)=\chi_c(B)-\chi_c(B_+)+\chi_c(B_-)
&\textrm{$a$ odd, \ $b$  even }\\
-2\chi_c(B_+)-2\chi_c(B_-)-3\chi_c(B_0)=-2\chi_c(B)-\chi_c(B_0)%
&\textrm{$a$ odd, \ $b$  odd \ }
\end{cases}
$$
Here we use the fact 
$\chi_c(B_+)+\chi_c(B_-)+\chi_c(B_0)=\chi_c(B)$. 
\end{proof}
If $B$ is an open $m$-ball, then $\chi_c(B)=(-1)^m$ and we conclude that: 
$$
1+(-1)^{m+a+b}\chi_c(F)=
\begin{cases}
(-1)^m((-1)^m+\chi_c(B_0))&
\textrm{$a$ even, $b$  even }\\
-(-1)^m(\chi_c(B_+)-\chi_c(B_-))
&\textrm{$a$ even, $b$  odd \ }\\
(-1)^m(\chi_c(B_+)-\chi_c(B_-))
&\textrm{$a$   odd, \ $b$  even }\\
-(-1)^m((-1)^m+\chi_c(B_0))
&\textrm{$a$ odd, \ $b$ odd \ }
\end{cases}
$$
If $f$ is an unfolding of a function-germ (i.e., $m=n$, $j=0$), 
then $B$ is an open $m$-ball. 
\begin{defn}\label{DefSsigma}
We consider an unfolding of a function-germ $(x,z)\mapsto g(x;0)+Q(z)$.
Let $\sigma$ denote the singularity type of the map 
$x\mapsto g(x,0)$. 
When $m+a+b$ is even, define $s_\sigma$ by: 
$$
s_\sigma=1+\chi_c(F)=
\begin{cases}
-\chi_c(B_+)+\chi_c(B_-)&\textrm{if $m$ is odd and $a+b$ is odd}\\
1+\chi_c(B_0)&\textrm{if $m$ is even and $a+b$ is even}
\end{cases}
$$
When $m+a+b$ is odd, define $s^{\max}_\sigma$, $s^{\min}_\sigma$ by:
\begin{align*}
s^{\max}_\sigma=1-\max\{\chi_c(F)\}=&
\begin{cases}
-\max\{-1+\chi_c(B_0)\}&\textrm{if $m$ is odd and $a+b$ is even}\\
\min\{\chi_c(B_+)-\chi_c(B_-)\}&\textrm{if $m$ is even and $a+b$ is odd}
\end{cases}
\\
s^{\min}_\sigma=1-\min\{\chi_c(F)\}=&
\begin{cases}
-\min\{-1+\chi_c(B_0)\}&\textrm{if $m$ is odd and $a+b$ is even}\\
\max\{\chi_c(B_+)-\chi_c(B_-)\}&\textrm{if $m$ is even and $a+b$ is odd}
\end{cases}
\end{align*}
\end{defn}
Now let us apply this machinery to $A_k$ and $D_k$ singularities.
\subsection{$A_k$ singularities}\label{Morin}
We set $n=m=1$, $j=0$ and: 
$$
g_c(x)=g(x;c)=x^{k+1}+c_1x^{k-1}+\cdots+c_{k-2}x^2+c_{k-1}x. 
$$
Then we have $\chi_c(B_0)=\#\{x:g_c(x)=\varepsilon\}$ and: 
$$
\chi_c(B_+)-\chi_c(B_-)=
\begin{cases}
0&k \  \textrm{even},\\
-1&k \  \textrm{odd}.
\end{cases}
$$
If $k$ is even, then we obtain: 
$$
1-(-1)^{a+b}\chi_c(F)=
\begin{cases}
1-\#\{x\in B:g_c(x)=\varepsilon\}&
\textrm{$a$  even, $b$  even }\\
0
&\textrm{$a$  even, $b$  odd \ }\\
0
&\textrm{$a$  odd,  $b$  even }\\
\#\{x\in B:g_c(x)=\varepsilon\}-1
&\textrm{$a$ odd, $b$ odd \ }
\end{cases}
$$
If $k$ is odd, then we obtain: 
$$
1-(-1)^{a+b}\chi_c(F)=
\begin{cases}
1-\#\{x\in B:g_c(x)=\varepsilon\}&
\textrm{$a$ even, $b$ even }\\
-1
&\textrm{$a$ even, $b$ odd \ }\\
1
&\textrm{$a$ odd, $b$ even }\\
\#\{x\in B:g_c(x)=\varepsilon\}-1
&\textrm{$a$ odd,  $b$  odd \ }
\end{cases}
$$
\subsection{Unfoldings of functions
$(x_1,x_2,z)\mapsto g(x,0)+Q(z)$}
We set $n=m=2$ and $j=0$. 
We consider the map defined by: 
\begin{multline}\label{NFF}
(\BB{R}^{2+a+b+h},0)\to(\BB{R}^{1+h},0),\qquad
(x_1,x_2,z_1,\dots,z_{a+b},c_1,\dots,c_h)\mapsto\\
(g(x_1,x_2,c_1,\dots,c_h)
+{z_1}^2+\cdots+{z_a}^2-{z_{a+1}}^2-\cdots-{z_{a+b}}^2,c_1,\dots,c_h).
\end{multline}
Let $r$ denote the number of branches of the curve defined by $g(x;0)=0$. 
Since $\chi_c(B_0)=-r$, we obtain that:
$$
1+(-1)^{a+b}\chi_c(F)=
\begin{cases}
1-r
&\textrm{$a$ even, $b$  even }\\
\chi_c(B_-)-\chi_c(B_+)
&\textrm{$a$ even, $b$  odd \ }\\
\chi_c(B_+)-\chi_c(B_-)
&\textrm{$a$ odd, \ $b$ even }\\
r-1
&\textrm{$a$ odd, \ $b$ odd \ }
\end{cases}
$$
O.~Viro \cite{Viro1990} described the list of possible smoothings of 
$D_k$ ($k\ge4$), $E_6$, $E_7$, $E_8$, $J_{10}$ 
and non-degenerate $r$-fold points. 
In next subsection, we use this list 
to compute $\chi_c(B_+)-\chi_c(B_-)$ 
for $D_k$ singularities. 
We leave to the reader the computation in the other cases. 
\subsection{$D_k$ singularities}
We denote by $D_k^\pm$ the singularity defined by \eqref{NFF}
with:
$$
g(x;c)=x_1({x_1}^{k-2}\pm {x_2}^2)+c_1x_1+\cdots
+c_{k-2}{x_1}^{k-2}+c_{k-1}x_2. 
$$
\underline{First case:}  $k$ is even and 
$\{x \in \mathbb{R}^2  :   g(x,0)=0 \}$ has 3 branches.

The zero set of $g(x,0)$ looks like  the following:
\begin{center}
\setlength{\unitlength}{0.7pt}
\begin{picture}(100,100)(-50,-50)
\path(0,-50)(0,50)
\path(43.3,25)(-43.3,-25)
\path(-43.3,25)(43.3,-25)
\put(10,-45){$D_4^-$}
\end{picture}
\qquad
\begin{picture}(100,100)(-50,-50)
\path(0,-50)(0,50)
\spline(43.3,20)(20,3)(0,0)
\spline(-43.3,20)(-20,3)(0,0)
\spline(43.3,-20)(20,-3)(0,0)
\spline(-43.3,-20)(-20,-3)(0,0)
\put(10,-45){$D_k^-$ ($k>4$ even)}
\end{picture}
\end{center}
First consider the smoothing described by the following picture:
\begin{center}
\setlength{\unitlength}{0.7pt}
\begin{picture}(100,100)(-50,-50)
\spline(0,-50)(-10,-17.3)(-43.3,-25)
\spline(43.3,25)(10,17.3)(0,50)
\path(-43.3,25)(43.3,-25)
\end{picture}
\qquad
\begin{picture}(100,100)(-50,-50)
\spline(0,50)(-10,17.3)(-43.3,25)
\spline(43.3,-25)(10,-17.3)(0,-50)
\path(-43.3,-25)(43.3,25)
\end{picture}
\end{center}
For such a smoothing, it is easy to see $\chi_c(B_+)-\chi_c(B_-)=0$. 

Next we consider the smoothings described by the following pictures: 
\begin{center}
\setlength{\unitlength}{0.7pt}
\begin{picture}(150,100)(-50,-50)
\spline(0,-50)(-10,-17.3)(-43.3,-25)
\spline(43.3,25)(20,0)(43.3,-25)
\spline(-43.3,25)(-10,17.3)(0,50)
\put(-4,0){$\langle\alpha\rangle$}
\put(10,-45){$0\le\alpha\le\frac{k-1}2$}
\end{picture}
\qquad
\begin{picture}(150,100)(-50,-50)
\spline(0,50)(10,17.3)(43.3,25)
\spline(-43.3,-25)(-20,0)(-43.3,25)
\spline(43.3,-25)(10,-17.3)(0,-50)
\put(-4,0){$\langle\alpha\rangle$}
\put(10,-45){$0\le\alpha\le\frac{k-1}2$}
\end{picture}
\qquad
\begin{picture}(140,100)(-50,-50)
\path(0,50)(0,-50)
\spline(43.3,-25)(20,0)(43.3,25)
\spline(-43.3,-25)(-20,0)(-43.3,25)
\put(-25,10){$\langle\alpha\rangle$}
\put(1,10){$\langle\beta\rangle$}
\put(10,-45){$0\le\alpha+\beta\le\frac{k-4}2$}
\end{picture}
\end{center}
Here $\langle \alpha \rangle$ represents a group of $\alpha$ ovals
without nests. For such smoothings, we see that:
$
\chi_c(B_+)-\chi_c(B_-)=2(1+\alpha), \ 
-2(1+\alpha), \ 2(\alpha-\beta)$ 
respectively.
Then we obtain: 
$$
\chi_c(B_+)-\chi_c(B_-)=-k, -k+2,\dots, k-2, k.
$$
\underline{Second case:}  $k$ is even and $\{x \in \mathbb{R}^2  :    g(x,0)=0 \}$ has 1 branch.

The smoothings are described by the figure on the right-hand side. 
\begin{center}
\setlength{\unitlength}{0.7pt}
\begin{picture}(150,100)(-50,-50)
\path(0,-50)(0,50)
\put(0,0){\circle*{2}}
\put(10,-45){$D_k^+$ ($k$ even)}
\end{picture}
\qquad
\begin{picture}(140,100)(-50,-50)
\path(0,-50)(0,50)
\put(-25,0){$\langle\alpha\rangle$}
\put(13,0){$\langle\beta\rangle$}
\put(10,-45){$0\le\alpha+\beta\le \frac{k-2}2$}
\end{picture}
\end{center}
For such smoothings, we see that
$\chi_c(B_+)-\chi_c(B_-)=2(\alpha-\beta)$. 
Thus we have:
$$
\chi_c(B_+)-\chi_c(B_-)=2-k,4-k,\dots,k-4,k-2.
$$
\underline{Third case:}  $k$ is odd.
\begin{center}
\setlength{\unitlength}{0.7pt}
\begin{picture}(130,100)(-50,-50)
\path(0,-50)(0,50)
\spline(43.3,20)(20,5)(0,0)
\spline(43.3,-20)(20,-5)(0,0)
\put(10,-45){$D_k$ ($k$ odd)}
\end{picture}
\qquad
\begin{picture}(130,100)(-50,-50)
\put(-25,0){$\langle\alpha\rangle$}
\spline(0,-50)(10,-10)(43.3,-20)
\spline(0,50)(10,10)(43.3,20)
\put(10,-45){$0\le\alpha\le\frac{k-3}2$}
\end{picture}
\qquad
\begin{picture}(130,100)(-50,-50)
\put(-25,0){$\langle\alpha\rangle$}
\put(2,10){$\langle\beta\rangle$}
\path(0,-50)(0,50)
\spline(43.3,-20)(20,0)(43.3,20)
\put(10,-45){$0\le\alpha+\beta\le\frac{k-3}2$}
\end{picture}
\end{center}
For such smoothings, we see that:
$\chi_c(B_+)-\chi_c(B_-)=-1-2\alpha,\ 1-2(\alpha-\beta)$, 
respectively.
Thus we have:
$$
\chi_c(B_+)-\chi_c(B_-)=2-k,4-k,\dots,k-4,k-2.
$$

\lsection{Study of stable maps $f:M\to N$ with $\dim M \ge \dim N$}
\label{SecSatble1}
Let $f:M\to N$ be a stable map between two smooth manifolds $M$ and $N$.
Let $m=\dim M$ and $n=\dim N$.
We assume that $m \ge n$, that $N$ is connected and that $M$ and $N$ have finite topological types.
Let $\sigma$ denote the singularity type given by the genotype : $x\mapsto(g(x;0),g'(x;0))$ in the notation 
of \eqref{PreNormalForm}.   
Then the genotype $\sigma$ gives rise to two singularity types of $f$ :  we say that $f$ is of type $\sigma^+$ (resp. $\sigma^-$) 
if, in the expression of given \eqref{PreNormalForm}, 
 $b$ is even (resp. odd).
The definition of $\sigma^+$ and $\sigma^-$ is ad hoc, 
since it depends on the normal form \eqref{PreNormalForm}. 
It seems to be no natural way to define the sign in general.  
We set:
$$
\sigma^\pm(f)=\{x\in M: f_x \textrm{ has singularity of type $\sigma^\pm$}\},
$$
where $f_x:(M,x)\to(N,f(x))$ is the germ of $f$ at $x$. 
Let $\Sigma(f)$ denote the critical set of $f$. 

Since $f$ is stable, $\Sigma(f)\cap f^{-1}(y)$ is a finite set for each 
$y\in N$.  
Then $f$ defines a multi-germ:
$$
f_y:(M,\Sigma(f)\cap f^{-1}(y))\to(N,y). 
$$
Let $\tau$ denote a type of singularities of stable multi-germs and: 
$$
N_\tau(f)=\{y\in N: f_y \textrm{ has singularities of type $\tau$}\}.
$$
\subsection{Case $m-n$ is odd}
If $m-n$ is odd, then $\chi_c(f^{-1}(y')\cap \overline{B_\varepsilon(x)})$
does not depend on the choice of regular value $y'$ nearby $f(x)$, where $B_\varepsilon(x)$ denotes the open ball of small radius $\varepsilon$ centered at $x$ in $M$.  
Indeed, $f^{-1}(y') \cap \overline{B_\varepsilon(x)}$ is a compact odd-dimensional manifold with boundary and so:
$$\chi_c(f^{-1}(y') \cap  \overline{B_\varepsilon(x)})=\chi(f^{-1}(y') \cap  \overline{B_\varepsilon(x)})=
\frac{1}{2} \chi(f^{-1}(y') \cap \partial  \overline{B_\varepsilon(x)}).$$
But the last Euler characteristic is equal to $\chi(f^{-1}(f(x)) \cap \partial \overline{B_\varepsilon(x)})$. If $x$ is of type $\nu$ then we 
denote by $c_\nu$ the Euler characteristic $\chi_c(f^{-1}(y') \cap \overline{B_\varepsilon(x)})$. 

Replacing the ball of small radius with a ball with big radius and assuming that $f$ is locally trivial at infinity, we may establish in a similar way that $\chi_c(f^{-1}(y))$ does not depend on the choice of the regular value $y$ of $f$.
We denote this Euler characteristic by $\chi_f$. 
\begin{thm}\label{RThm1}
Assume that $f:M\to N$ is locally trivial at infinity and
has finitely many singularity types (this is the case when $(m,n)$ is
a pair of nice dimensions  in Mather's sense). 
Then we have:
\begin{equation}\label{F1}
\medsum_{\nu}c_\nu\chi_c(\nu(f))=\chi_f\chi_c(N),
\end{equation}
provided that the $\chi_c(\nu(f))$'s and $\chi_f$ are finite.
Moreover, if all singularities of $f$ are versal unfoldings of function-germs
then we have: 
\begin{equation}\label{F2}
\chi_c(M)-\chi_f\chi_c(N)=\medsum_\sigma s_\sigma
\bigl[\chi_c(\sigma^+(f))-\chi_c(\sigma^-(f))\bigr],
\end{equation}
where $\sigma$ denotes the singularity type of the genotype and 
$s_\sigma$ is defined as in Definition \ref{DefSsigma}.
\end{thm}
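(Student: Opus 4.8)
The plan is to deduce both identities from Viro's Fubini theorem (Lemma~\ref{Fubini}) after establishing a single local‑to‑global formula for $\chi_c(f^{-1}(y))$.

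\textbf{Step 1: a local jump formula.} First I would fix $y\in N$, write $\Sigma(f)\cap f^{-1}(y)=\{x_1,\dots,x_k\}$ (finite by stability) with $x_i$ of type $\nu_i$, and choose pairwise disjoint closed balls $\overline{B_\varepsilon(x_i)}\subset M$ with $\varepsilon$ small enough that $x_i$ is the only critical point of $f$ in $\overline{B_\varepsilon(x_i)}$ and each sphere $S_\varepsilon(x_i)$ is transverse to the fibres of $f$. By the conic structure of the analytic set $f^{-1}(y)$ at $x_i$, the local fibre $f^{-1}(y)\cap\overline{B_\varepsilon(x_i)}$ is contractible, so its $\chi_c$ is $1$; additivity of $\chi_c$ then gives $\chi_c(f^{-1}(y))=\chi_c\bigl(f^{-1}(y)\setminus\bigcup_i\overline{B_\varepsilon(x_i)}\bigr)+k$. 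The same decomposition applied to a regular value $y'$ near $y$ gives $\chi_f=\chi_c(f^{-1}(y'))=\chi_c\bigl(f^{-1}(y')\setminus\bigcup_i\overline{B_\varepsilon(x_i)}\bigr)+\sum_i c_{\nu_i}$. I would then check that over a small ball $D$ containing $y$ and $y'$ the restriction $f\colon f^{-1}(D)\setminus\bigcup_i B_\varepsilon(x_i)\to D$ is a locally trivial fibration --- by Ehresmann's theorem (or the Thom--Mather isotopy lemma) over a large fixed ball of $M$, and by local triviality at infinity on the complement --- so that the two exterior Euler characteristics coincide. Subtracting the two displays yields, for every $y\in N$,
$$\chi_c(f^{-1}(y))=\chi_f+\sum_{x\in\Sigma(f)\cap f^{-1}(y)}\bigl(1-c_{\nu(x)}\bigr).$$

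\textbf{Step 2: formula \eqref{F1}.} I would apply Lemma~\ref{Fubini} to $\varphi=\BS{1}_M+\sum_\nu(c_\nu-1)\BS{1}_{\nu(f)}\in\Cons(M)$, the sum running over the singular types; this $\varphi$ equals $c_\nu$ on each $\nu(f)$ and equals $1$ on $M\setminus\Sigma(f)$. Using Step~1 and $\chi_c(f^{-1}(y)\setminus\Sigma(f))=\chi_c(f^{-1}(y))-\#(\Sigma(f)\cap f^{-1}(y))$ one finds $f_*\varphi(y)=\chi_c(f^{-1}(y)\setminus\Sigma(f))+\sum_{x\in\Sigma(f)\cap f^{-1}(y)}c_{\nu(x)}=\chi_f$, so $f_*\varphi$ is the constant $\chi_f$, and Corollary~\ref{Fubini3} gives $\int_M\varphi\,d\mu_M=\chi_f\chi_c(N)$; this is \eqref{F1}, the regular stratum $M\setminus\Sigma(f)$ being counted with $c_\nu=1$.

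\textbf{Step 3: formula \eqref{F2}.} Feeding $\varphi=\BS{1}_M$ into Lemma~\ref{Fubini} gives $\chi_c(M)=\int_N\chi_c(f^{-1}(y))\,d\mu_N$; substituting Step~1 and applying Lemma~\ref{Fubini} once more to each $f|_{\nu(f)}\colon\nu(f)\to N$ (so that $\int_N\#(\nu(f)\cap f^{-1}(y))\,d\mu_N=\chi_c(\nu(f))$) produces $\chi_c(M)=\chi_f\chi_c(N)+\sum_\nu(1-c_\nu)\chi_c(\nu(f))$, with only singular types contributing. When every singularity of $f$ is a versal unfolding of a function-germ, each critical point has the form \eqref{PreNormalForm} with $j=0$, and a genotype $\sigma$ gives the two types $\sigma^{\pm}$ according to the parity of $b$. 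Since $m-n$ is odd, $f^{-1}(y')\cap\overline{B_\varepsilon(x)}$ is a compact manifold with boundary of odd dimension $m-n$, so by Poincar\'e--Lefschetz duality $\chi$ of its boundary equals $2c_\nu$ and hence $c_\nu=-\chi_c(F)$ for the open local fibre $F$ of Section~\ref{Euler characteristics of local generic fibers}. Matching this with Definition~\ref{DefSsigma} and the lemma computing $\chi_c(F)$ one obtains $1-c_{\sigma^{+}}=s_\sigma$ and $1-c_{\sigma^{-}}=-s_\sigma$; inserting these into the displayed identity produces \eqref{F2}.

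\textbf{The main obstacle.} The one genuinely delicate step is the local triviality assertion in Step~1: one must splice an Ehresmann/Thom--Mather trivialization over a large compact part of $M$ with the local triviality at infinity of $f$, and it is precisely this patching (together with the requirement that the resulting pieces have finite $\chi_c$) that accounts for the finiteness provisos in the statement. Everything else is bookkeeping with additivity of $\chi_c$, the Fubini lemma, and the local Euler-characteristic computations of Section~\ref{Euler characteristics of local generic fibers}.
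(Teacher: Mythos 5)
Your proposal is correct and follows essentially the same route as the paper: your Step 1 jump formula is precisely the content of Lemma \ref{L1} (together with the cone-structure and odd-dimensional boundary observations of Remark \ref{L2}), your $\varphi$ is the same constructible function the paper feeds into Viro's Fubini lemma to get \eqref{F1}, and the identification $1-c_{\sigma^{\pm}}=\pm s_\sigma$ via the computations of Section \ref{Euler characteristics of local generic fibers} is handled identically. The only cosmetic difference is that you obtain $\sum_\nu(1-c_\nu)\chi_c(\nu(f))$ by a second application of the Fubini lemma to $\BS{1}_M$ and to $f|_{\nu(f)}$, where the paper uses additivity of $\chi_c$ directly, and you are right that the splicing of the interior trivialization with local triviality at infinity is the delicate point, which the paper also passes over at the same level of detail.
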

\begin{proof}
We consider the stratification of $f$ defined by the types of 
singularities (see Nakai's paper \cite[\S1]{Nakai2000}) and we define 
$\mathcal S(M)$, $\mathcal S(N)$ 
as the subset algebras generated by the strata and fibers of $f$.   
Then $(\mathcal S(X),\mathcal S(Y))$ fits to the map $f$. 
Set $\mu_X=\chi_c$, $\mu_Y=\chi_c$ and:
$$
\varphi(x)=\chi_c(f^{-1}(y')\cap \overline{B_\varepsilon(x)}),
$$
where $y'$ is a regular value nearby $f(x)$. 
Applying Corollary \ref{Fubini2} for $\varphi$, 
Lemma \ref{L1} and Remark \ref{L2} below,  we obtain:
\begin{equation}\label{F11}
\medsum_{\nu}c_\nu\chi_c(\nu(f))=\chi_f\chi_c(N).
\end{equation}
By the additivity of the Euler characteristic with compact support, we get:
$$
\chi_c(M)-\chi_f\chi_c(N)
=
\medsum_{\nu}(1-c_\nu)\chi_c(\nu(f)).
$$
If all the singularities are versal unfoldings of function-germs then
 each genotype gives rise to two singularity types $\sigma^+(f)$ and
 $\sigma^-(f)$ and $1-c_{\sigma^-}=-(1-c_{\sigma^+})=-s_\sigma$ by
 Remark  \ref{L2} below and the computations made in Section \ref{Euler
 characteristics of local generic fibers}.

\end{proof}
\begin{lem}\label{L1}
Let $f:M\to N$ be a smooth map such that:
\begin{itemize}
\item $\dim M-\dim N$ is odd,
\item $f|_{\Sigma(f)}$ is finite,
\item $f$ is locally trivial at infinity. 
\end{itemize}
Then for each $y \in N$, we have: 
$$
f_*\varphi(y)
=\int_{f^{-1}(y)}\varphi(x)d\chi_c
=\chi_c(f^{-1}(y')), 
$$
where $y'$ is a regular value of $f$ close to $y$.
\end{lem}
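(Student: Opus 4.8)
The plan is to evaluate both sides of the asserted equality by hand, exploiting that the integrand $\varphi$ equals $1$ away from the finitely many critical points lying over $y$. Write $\Sigma(f)\cap f^{-1}(y)=\{x_1,\dots,x_r\}$, which is finite because $f|_{\Sigma(f)}$ is finite, and let $\nu_i$ be the singularity type of $x_i$. Since $\dim M-\dim N$ is odd, the discussion preceding the lemma shows that $\chi_c\bigl(f^{-1}(z)\cap\overline{B_\varepsilon(x)}\bigr)$ does not depend on the regular value $z$ chosen near $f(x)$; in particular $\varphi(x_i)=c_{\nu_i}$, computed with the same $y'$ as in the statement. At a regular point $x\in f^{-1}(y)$ the germ of $f$ is a submersion, so for $\varepsilon$ small and $z$ a regular value near $y$ the slice $f^{-1}(z)\cap\overline{B_\varepsilon(x)}$ is a closed $(\dim M-\dim N)$-disc, whence $\varphi(x)=1$. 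As every point of $f^{-1}(y)\setminus\{x_1,\dots,x_r\}$ is regular, additivity of $\chi_c$ yields
$$\int_{f^{-1}(y)}\varphi\,d\chi_c=\sum_{i=1}^r c_{\nu_i}+\bigl(\chi_c(f^{-1}(y))-r\bigr).$$

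It will then remain to show that $\chi_c(f^{-1}(y'))=\chi_c(f^{-1}(y))-r+\sum_{i=1}^r c_{\nu_i}$. The strategy is to excise small neighbourhoods of the $x_i$ and trivialise $f$ on what is left. One picks pairwise disjoint balls $B_i=B_\varepsilon(x_i)$ with $\varepsilon$ generic, so that both $f^{-1}(y)$ and $f^{-1}(y')$ meet every $\partial B_i$ transversally; for $\varepsilon$ small the set $f^{-1}(y)\cap\overline{B_i}$ is moreover a cone with vertex $x_i$ (local conic structure of a stable, hence locally polynomial, germ), so $\chi_c(f^{-1}(y)\cap\overline{B_i})=1$, while $\chi_c(f^{-1}(y')\cap\overline{B_i})=c_{\nu_i}$ by definition of $c_{\nu_i}$. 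Because $f$ is locally trivial at infinity at $y$, there are a compact $K$ and a ball $D\ni y$ with $f\colon(M\setminus K)\cap f^{-1}(D)\to D$ a trivial fibration; this forces $\Sigma(f)\cap f^{-1}(D)\subset K$, and since $f|_{\Sigma(f)}$ is proper one may shrink $D$ so that $\Sigma(f)\cap f^{-1}(D)\subset\bigsqcup_i B_i\subset K$. On the compact region $(K\cap f^{-1}(D))\setminus\bigsqcup_i B_i$ the map $f$ is then a proper submersion which is submersive along the boundary spheres $\partial B_i$, so Ehresmann's theorem gives local triviality there, while on $(M\setminus K)\cap f^{-1}(D)$ it is already trivial; patching along $\partial K\cap f^{-1}(D)$, where both pieces carry the product structure induced by $f$, one gets that
$$f\colon\ f^{-1}(D)\setminus\bigsqcup_i\overline{B_i}\ \longrightarrow\ D$$
is a locally trivial fibration, hence trivial over the contractible $D$.

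Applying additivity of $\chi_c$ to the partitions $f^{-1}(y)=\bigsqcup_i\bigl(f^{-1}(y)\cap\overline{B_i}\bigr)\sqcup\bigl(f^{-1}(y)\setminus\bigsqcup_i\overline{B_i}\bigr)$ and likewise for $f^{-1}(y')$ will then give the result: triviality of the fibration just built yields $\chi_c(f^{-1}(y)\setminus\bigsqcup_i\overline{B_i})=\chi_c(f^{-1}(y')\setminus\bigsqcup_i\overline{B_i})$, so together with $\chi_c(f^{-1}(y)\cap\overline{B_i})=1$ and $\chi_c(f^{-1}(y')\cap\overline{B_i})=c_{\nu_i}$ one obtains $\chi_c(f^{-1}(y'))-\chi_c(f^{-1}(y))=\sum_{i=1}^r(c_{\nu_i}-1)$. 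Combined with the displayed formula for the integral, this is exactly the assertion $f_*\varphi(y)=\chi_c(f^{-1}(y'))$.

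I expect the genuine difficulty to be the middle step: producing, over one and the same small ball $D$, a locally trivial fibration on the complement of the critical balls. Three regimes have to be reconciled at once — the interior, where $f$ is a proper submersion and Ehresmann applies; the boundary spheres $\partial B_i$, where one needs the generic choice of $\varepsilon$ to keep $f$ submersive; and the non-compact end, where properness fails and the hypothesis of local triviality at infinity is precisely the substitute that makes the argument go through — and the three trivialisations must be glued compatibly. The conic-structure input giving $\chi_c(f^{-1}(y)\cap\overline{B_i})=1$ is classical for stable germs but should be stated explicitly, as should the independence of $c_\nu$ from the nearby regular value, which rests on the oddness of $\dim M-\dim N$.
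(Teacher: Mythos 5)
Your proposal is correct and follows essentially the same route as the paper's proof: excise small balls around the critical points in $f^{-1}(y)$, identify $\chi_c$ of the complements in $f^{-1}(y)$ and $f^{-1}(y')$ via local triviality (at infinity together with Ehresmann away from the critical set), and handle the ball pieces by the local conic structure and the definition of $c_\nu$; the paper simply leaves the trivialisation over $D$ implicit, which you spell out. The only slip is your appeal to properness of $f|_{\Sigma(f)}$ (the hypothesis is finiteness), but this is harmless: $\Sigma(f)\cap f^{-1}(D)$ is already contained in the compact set $K$ furnished by local triviality at infinity, and closedness of $\Sigma(f)$ then yields the desired shrinking of $D$.
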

\begin{proof}
Set $\{x_1,\dots,x_s\}=f^{-1}(y)\cap\Sigma(f)$. Take a regular value $y'$ of $f$ near $y$. Then:
\begin{align*}
\chi_c(f^{-1}(y'))
=&\chi_c(f^{-1}(y')\setminus\cup_i\overline{B_\varepsilon(x_i)})
+\medsum_i\chi_c(f^{-1}(y')\cap \overline{B_\varepsilon(x_i)})\\
=&\chi_c(f^{-1}(y)\setminus\cup_i\overline{B_\varepsilon(x_i)})
+\medsum_i\chi_c(f^{-1}(y')\cap \overline{B_\varepsilon(x_i)})\\
=&\chi_c(f^{-1}(y)\setminus\{x_1,\dots,x_s\})
+\medsum_i\varphi(x_i)\\
=&\int_{f^{-1}(y)\setminus\{x_1,\dots,x_s\}}\varphi(x)d\chi_c
+\int_{\{x_1,\dots,x_s\}}\varphi(x)d\chi_c\\
=&\int_{f^{-1}(y)}\varphi(x)d\chi_c.
\end{align*}
\end{proof}
\begin{rem}\label{L2}
Set $\phi(x)=\chi_c(f^{-1}(y')\cap B_\varepsilon(x))$ where $y'$ is a
 regular value nearby $f(x)$.   
Then:
$$
\varphi(x)=\phi(x)+\chi_c(f^{-1}(y')\cap S_\varepsilon(x)), 
$$
where $S_\varepsilon(x)$ is the sphere of radius $\varepsilon$ centered at $x$.
If $f^{-1}(y')\cap \overline{B_\varepsilon(x)}$ is an odd
 dimensional  manifold with boundary $f^{-1}(y')\cap S_\varepsilon(x)$,
then we obtain $\phi(x)=-\varphi(x)$, 
since: 
$$
2\varphi(x)
=\chi_c(f^{-1}(y')\cap S_\varepsilon(x))
=-2\phi(x).
$$
Similarly 
if $f^{-1}(y')\cap \overline{B_\varepsilon(x)}$ is an even
dimensional  manifold with boundary $f^{-1}(y')\cap S_\varepsilon(x)$,
we obtain that $\phi(x)=\varphi(x)$.
\end{rem}

%

\begin{cor}\label{RMorin1}
Assume that the map $f$ satisfies the assumptions of Theorem \ref{RThm1} and 
has at worst $A_n$ singularities. Then, we have:
$$
\chi_c(M)-\chi_f\, \chi_c(N)
=\medsum_{k :\textrm{odd}}
\bigl[\chi_c( A_k^+ (f))
-\chi_c( A_k^-(f))\bigr].
$$
\end{cor}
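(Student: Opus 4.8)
The plan is to obtain the statement as a direct specialisation of the second formula \eqref{F2} of Theorem \ref{RThm1}. Since $f$ has at worst $A_n$ singularities, every mono-germ of $f$ is a stable germ of type $A_k$ for some $k\le n$, hence a versal unfolding of a function-germ, the genotype being the one-variable germ $x\mapsto x^{k+1}$. Thus the hypothesis of the second part of Theorem \ref{RThm1} holds, the only genotypes occurring are the $A_k$, and \eqref{F2} reads
$$
\chi_c(M)-\chi_f\,\chi_c(N)=\sum_{k}s_{A_k}\bigl[\chi_c(A_k^+(f))-\chi_c(A_k^-(f))\bigr],
$$
so it remains to evaluate the constants $s_{A_k}$ of Definition \ref{DefSsigma}.

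First I would settle the parity bookkeeping. For the genotype $x\mapsto x^{k+1}$ the integer $m$ of Definition \ref{DefSsigma} (the number of source variables of the genotype, i.e. $\dim B$) equals $1$. Moreover the local generic fibre $F$ in \eqref{PreNormalForm} has dimension $m+a+b-1$ and is diffeomorphic to a generic fibre of the global map $f$, so $m+a+b-1=\dim M-\dim N$, which is odd under the standing hypotheses of Theorem \ref{RThm1}. Hence $m+a+b$ is even, and since $m=1$ is odd, $a+b$ is odd; we are therefore in the case of Definition \ref{DefSsigma} with $m$ odd and $a+b$ odd, where $s_{A_k}=-\chi_c(B_+)+\chi_c(B_-)$.

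Finally I would quote the computation of Section \ref{Morin}: taking $g_c(x)=x^{k+1}+c_1x^{k-1}+\cdots+c_{k-1}x$ one has $\chi_c(B_+)-\chi_c(B_-)=0$ when $k$ is even and $\chi_c(B_+)-\chi_c(B_-)=-1$ when $k$ is odd, so $s_{A_k}=0$ for $k$ even and $s_{A_k}=1$ for $k$ odd. Substituting these values into the displayed formula deletes every even-$k$ term and leaves each odd-$k$ term with coefficient $1$, which is exactly the asserted identity. The only point requiring real care is the parity argument of the middle paragraph --- matching the local invariants $m,a,b$ of each normal form with the global parity of $\dim M-\dim N$ so that every genotype lands in the intended branch of Definition \ref{DefSsigma}; everything else is routine substitution.
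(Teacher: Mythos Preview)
Your proof is correct and follows essentially the same route as the paper's own argument: both apply \eqref{F2} of Theorem \ref{RThm1} and then invoke the computations of Section \ref{Euler characteristics of local generic fibers} to conclude that $s_{A_k}=0$ for $k$ even and $s_{A_k}=1$ for $k$ odd. Your middle paragraph makes explicit the parity check (that the local $m=1$ and $a+b=\dim M-\dim N$ is odd, so one lands in the correct branch of Definition \ref{DefSsigma}) which the paper leaves implicit.
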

\begin{proof}
Using the computations in
Section \ref{Euler characteristics of local generic fibers}, 
we see that $s_{A_k}=0$ is $k$ is even and $s_{A_k}=1$ if $k$ is odd. 
\end{proof}
\begin{cor}\label{RMorin11}
Assume that the map $f$ satisfies the assumptions of Theorem \ref{RThm1} and 
has only stable singularities locally defined by \eqref{NFF}.
We denote $\sigma_r$ the union of singularities types
so that the number of branches of $g(x_1,x_2;0)=0$ near $0$ is $r$.  
We denote by $\sigma_r^+$ (resp. $\sigma_r^-$) 
the union of such singularities types with even (resp. odd) $b$. 
Then, we have: 
$$
\chi_c(M)-\chi_f\, \chi_c(N)
=
\medsum_{r}
(1-r)\bigl[\chi_c( \sigma_r^+ (f))
-\chi_c(\sigma_r^-(f))\bigr].
$$
\end{cor}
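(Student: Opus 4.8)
The plan is to obtain the formula directly from equation \eqref{F2} of Theorem \ref{RThm1}, by computing the coefficients $s_\sigma$ for the singularities in question and then collecting terms according to the number of branches.

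First I would observe that, by hypothesis, every singularity of $f$ is locally of the form \eqref{NFF}, hence is a versal unfolding of a function-germ, the genotype being the two-variable function $x\mapsto g(x_1,x_2;0)$ stabilized by the quadratic form $Q$. So the assumptions under which \eqref{F2} holds are satisfied, and
$$
\chi_c(M)-\chi_f\,\chi_c(N)=\medsum_\sigma s_\sigma\bigl[\chi_c(\sigma^+(f))-\chi_c(\sigma^-(f))\bigr],
$$
where $\sigma$ runs over the genotypes actually occurring, each a function-germ in the two variables $(x_1,x_2)$.

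Next I would identify $s_\sigma$. Since $f$ satisfies the hypotheses of Theorem \ref{RThm1}, we are in the case $\dim M-\dim N$ odd, so in the local model \eqref{NFF} one has $\dim M-\dim N=1+a+b$ and therefore the number $a+b$ of $z$-variables is even; together with $m=2$ this makes $m+a+b$ even, so $s_\sigma$ is defined as in Definition \ref{DefSsigma}. Using the representative with $b$ even (hence $a$ even), the computation in Section \ref{Euler characteristics of local generic fibers} for unfoldings of functions $(x_1,x_2,z)\mapsto g(x,0)+Q(z)$ gives $\chi_c(F)=\chi_c(B_0)=-r(\sigma)$, where $r(\sigma)$ is the number of branches of $\{\, g(x_1,x_2;0)=0\,\}$ near $0$; hence $s_\sigma=1+\chi_c(F)=1-r(\sigma)$.

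Finally I would regroup. By definition $\sigma_r^\pm(f)$ is the disjoint union of the $\sigma^\pm(f)$ over the genotypes $\sigma$ with $r(\sigma)=r$, so by additivity of $\chi_c$ one has $\medsum_{r(\sigma)=r}\chi_c(\sigma^\pm(f))=\chi_c(\sigma_r^\pm(f))$. Substituting $s_\sigma=1-r(\sigma)$ into the display above and collecting the terms with a fixed $r$ then yields
$$
\chi_c(M)-\chi_f\,\chi_c(N)=\medsum_r(1-r)\bigl[\chi_c(\sigma_r^+(f))-\chi_c(\sigma_r^-(f))\bigr],
$$
which is the assertion. The step requiring the most care — and the only place I expect any real friction — is the identification $s_\sigma=1-r(\sigma)$: one must match the sign convention of Section \ref{SecSatble1} (where $\sigma^+$ is the stratum with $b$ even) with the representative implicitly used in Definition \ref{DefSsigma}, and keep track of the parities of $a$, $b$ and $m$ so as to land on the correct line of the case distinction in Section \ref{Euler characteristics of local generic fibers}; the remaining hypotheses (finiteness of the relevant Euler characteristics, local triviality at infinity, finitely many singularity types) are inherited from Theorem \ref{RThm1}.
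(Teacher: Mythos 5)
Your argument is correct and follows the paper's own route: both invoke formula \eqref{F2} of Theorem \ref{RThm1} and reduce the corollary to the identity $s_{\sigma_r}=1-r$, which comes from the case ``$a$ even, $b$ even'' of the lemma in Section \ref{Euler characteristics of local generic fibers} together with $\chi_c(B_0)=-r$. Your version merely spells out the parity bookkeeping ($a+b$ even because $m-n$ is odd, the $b$-even representative matching $\sigma^+$ in Definition \ref{DefSsigma}) and the regrouping of genotypes by branch number, which the paper leaves implicit.
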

\begin{proof}
Using the computations in
Section \ref{Euler characteristics of local generic fibers}, 
we see that $s_{ \sigma_r}=1-r$ . 
\end{proof}

\subsection{Case $m-n$ is even and $m-n >0$}
If $m-n$ is even and non-zero then $\chi_c(f^{-1}(y') \cap
\overline{B_\varepsilon(x)})$ depends on the choice of the regular value
$y'$ nearby $f(x)$. But its parity does not depend on $y'$.
Indeed, $f^{-1}(y') \cap \overline{B_\varepsilon(x)}$ is a compact 
even-dimensional manifold with boundary and so:
\begin{align*}
\chi_c(f^{-1}(y') \cap  \overline{B_\varepsilon(x)})
\equiv&\chi(f^{-1}(y')\cap  \overline{B_\varepsilon(x)})\\
\equiv&\psi (f^{-1}(y')\cap S_\varepsilon(x))\\
\equiv&\psi (f^{-1}(f(x))\cap S_\varepsilon(x)) \pmod 2,
\end{align*}
where $\psi$ denotes the semi-characteristic, 
i.e., half the sum of the mod $2$ Betti numbers (see \cite{Wall1983}).

If a point $x$ in $M$ is of singular type $\nu$ then we denote by
$c_\nu$ the mod $2$ Euler characteristic $\chi_c(f^{-1}(y') \cap
\overline{B_\varepsilon(x)})$. We will denote by $\chi_f$ the mod $2$
Euler characteristic $\chi_c(f^{-1}(y))$ where $y$ is a regular value of
$f$. The following theorem is proved in the same way as Theorem
\ref{RThm1}.
\begin{thm}\label{RThm1B}
Assume that $f:M\to N$ is locally trivial at infinity and
has finitely many singularity types (this is the case when $(m,n)$ is
a pair of nice dimensions in Mather's sense). 
Then we have:
\begin{equation}\label{F111}
\medsum_{\nu}c_\nu\chi_c(\nu(f))\equiv\chi_f\chi_c(N) \pmod{2},
\end{equation}
provided that the $\chi_c(\nu(f))$'s and $\chi_f$ are finite.
Moreover, if all singularities of $f$ are versal unfoldings of function-germs
then we have: 
\begin{equation}\label{F22}
\chi_c(M)-\chi_f\chi_c(N)\equiv\medsum_\sigma s_\sigma
\bigl[\chi_c(\sigma^+(f))-\chi_c(\sigma^-(f))\bigr] \pmod{2},
\end{equation}
where $\sigma$ denotes the singularity type of the genotype and
 $s_\sigma$ is defined as in Definition \ref{DefSsigma}.
\end{thm}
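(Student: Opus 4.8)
The plan is to repeat the proof of Theorem~\ref{RThm1} word for word, now with coefficient ring $R=\BB{Z}/2\BB{Z}$ and with $\mu_X=\mu_Y$ the mod~$2$ Euler characteristic, which satisfies the axioms for $\mu$ by Example~\ref{EulerChar}. One keeps the same stratification of $f$ by singularity types of its multi-germs and lets $\mathcal S(M)$, $\mathcal S(N)$ be the subset algebras generated by the strata together with the fibers of $f$; then $(\mathcal S(M),\mathcal S(N))$ fits to $f$ exactly as before, this being insensitive to replacing $\chi_c$ by its reduction mod~$2$.

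The genuinely new point is that, since $m-n$ is even and positive, $\chi_c(f^{-1}(y')\cap\overline{B_\varepsilon(x)})$ is no longer independent of the regular value $y'$ near $f(x)$, but its class mod~$2$ is, by the semi-characteristic computation displayed just before the statement. Hence $\varphi(x)=\chi_c(f^{-1}(y')\cap\overline{B_\varepsilon(x)})\bmod 2$ defines an element of $\Cons(M,\mathcal S(M),\BB{Z}/2\BB{Z})$ which, by local triviality of the stable map along each stratum, equals the constant $c_\nu$ on $\nu(f)$. The mod~$2$ version of Lemma~\ref{L1} holds with the same proof (additivity of $\chi_c$, local triviality of $f$ away from $\Sigma(f)$, shrinking of balls): for every $y\in N$,
$$
f_*\varphi(y)=\sint_{f^{-1}(y)}\varphi(x)\,d\mu_X\equiv\chi_c(f^{-1}(y'))\pmod 2,
$$
with $y'$ a regular value near $y$; and, replacing the small balls by a large compact set and using local triviality at infinity --- exactly as in the passage to big balls in the odd case, but with the semi-characteristic of the boundary in place of the halving identity --- the right-hand side equals the constant $\chi_f$. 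Thus $f_*\varphi\equiv\chi_f$ on $N$, and Corollary~\ref{Fubini3} (or Corollary~\ref{Fubini2}) gives \eqref{F111}.

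For \eqref{F22}, additivity of $\chi_c$ gives $\chi_c(M)\equiv\medsum_\nu\chi_c(\nu(f))\pmod 2$, hence
$$
\chi_c(M)-\chi_f\chi_c(N)\equiv\medsum_\nu(1-c_\nu)\chi_c(\nu(f))\pmod 2.
$$
When every singularity of $f$ is a versal unfolding of a function-germ, each genotype $\sigma$ splits into $\sigma^+(f)$ ($b$ even) and $\sigma^-(f)$ ($b$ odd); using Remark~\ref{L2} together with the computations of Section~\ref{Euler characteristics of local generic fibers}, carried through mod~$2$, one checks that $1-c_{\sigma^+}\equiv 1-c_{\sigma^-}\equiv s_\sigma\pmod 2$, where $s_\sigma$ denotes $s^{\max}_\sigma$ or $s^{\min}_\sigma$ of Definition~\ref{DefSsigma} --- these agreeing mod~$2$ because here the local generic fiber is even-dimensional, so its mod~$2$ Euler characteristic does not depend on the smoothing. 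Since mod~$2$ one cannot distinguish $\chi_c(\sigma^+(f))+\chi_c(\sigma^-(f))$ from $\chi_c(\sigma^+(f))-\chi_c(\sigma^-(f))$, this is \eqref{F22}.

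The main obstacle is less any single computation than the bookkeeping needed to see that the parts of the proof of Theorem~\ref{RThm1} resting on the \emph{equality} $\chi_c(f^{-1}(y')\cap\overline{B_\varepsilon(x)})=c_\nu$ survive once this is only a congruence mod~$2$: that $\varphi$ is genuinely $\BB{Z}/2\BB{Z}$-constructible, that $f_*\varphi$ is globally constant on $N$ (where local triviality at infinity and semi-characteristic invariance both enter), and that $s^{\max}_\sigma$ and $s^{\min}_\sigma$ collapse to the single $s_\sigma$ appearing in \eqref{F22}.
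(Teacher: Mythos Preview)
Your proposal is correct and follows exactly the approach the paper intends: the paper's own proof is the single sentence ``proved in the same way as Theorem~\ref{RThm1}'', and you have reconstructed that passage to the mod~$2$ setting with more care than the paper itself supplies. In particular, your observation that in this parity regime Definition~\ref{DefSsigma} only furnishes $s_\sigma^{\max}$ and $s_\sigma^{\min}$, and that these coincide mod~$2$ (so the paper's ``$s_\sigma$'' is unambiguous), is a point the paper leaves implicit.
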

This theorem gives a mod $2$ equality. Nevertheless, it is still
possible to find integral relations between the topology 
of the source, the target and the singular set.
 
Let $\nu$ denote a singularity type of a map-germ. 
Let $c_\nu^{\max}$ (resp. $c_\nu^{\min}$) denote 
the maximal (resp. minimum) of all possible Euler characteristics of 
local regular fibers nearby the singular fiber. 
Set also:
\begin{align*}
N_j^{\max}=&\big\{y\in N  : 
j=\max\{\chi_c(f^{-1}(y')):y' \textrm{ a regular value nearby } y\} \big\},\\
N_j^{\min}=&\big\{y\in N  :  
j=\min\{\chi_c(f^{-1}(y')):y' \textrm{ a regular value nearby } y\} \big\}.
\end{align*}
\begin{thm}\label{RThm1C}
If a smooth map $f:M\to N$ is locally trivial at infinity and
has finitely many singularity types (this is the case if $(m,n)$ is
a pair of nice dimensions in  Mather's sense), 
then: 
\begin{align*}
\medsum_{\nu}c_\nu^{\max}\cdot\chi_c(\nu(f))
\ge&\medsum_jj\chi_c(N_j^{\max}), \\
\medsum_{\nu}c_\nu^{\min}\cdot\chi_c(\nu(f))
\le&\medsum_jj\chi_c(N_j^{\min}),
\end{align*}
provided the $\chi_c(\nu(f))$'s, the $\chi_c(N_j^{\max})$'s  and the $\chi_c(N_j^{\min})$'s are finite.
If $f$ is stable, we have the equalities. 
Moreover, if all singularities are versal unfoldings of function-germs
then: 
\begin{align*}
\chi_c(M)-\medsum_jj\chi_c(N^{\max}_j)
=&\sum_\sigma 
\bigl[s_\sigma^{\max}\chi_c(\sigma^+(f))
-s_\sigma^{\min}\chi_c(\sigma^-(f))\bigr],\\
\chi_c(M)-\medsum_jj\chi_c(N^{\min}_j)
=&\sum_\sigma 
\bigl[s_\sigma^{\min}\chi_c(\sigma^+(f))
-s_{\sigma}^{\max}\chi_c(\sigma^-(f))\bigr],
\end{align*}
where $\sigma$ denotes the singularity type of the genotype and
 $s_\sigma^{\max}$ and $s_\sigma^{\min}$ are defined 
in Definition \ref{DefSsigma}.
\end{thm}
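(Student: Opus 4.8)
\emph{Sketch of the argument.}
The plan is to run the proof of Theorem \ref{RThm1} with its constructible function $\varphi$ replaced by two extremal variants. As there, I keep the stratification of $f$ by singularity types and the subset algebras $\mathcal S(M)$, $\mathcal S(N)$ generated by the strata and the fibers of $f$, so that $(\mathcal S(M),\mathcal S(N))$ fits to $f$, and I take $\mu_M=\mu_N=\chi_c$. Let $\varphi^{\max},\varphi^{\min}\in\Cons(M)$ be the functions equal to $c_\nu^{\max}$, respectively $c_\nu^{\min}$, on each stratum $\nu(f)$ (both equal $1$ on the regular stratum, a local regular fiber at a regular point being a closed $(m-n)$-ball). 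Then $\int_M\varphi^{\max}\,d\chi_c=\sum_\nu c_\nu^{\max}\chi_c(\nu(f))$, likewise for $\varphi^{\min}$, and by Lemma \ref{Fubini} the fiber integrals $f_*\varphi^{\max}$, $f_*\varphi^{\min}$ lie in $\Cons(N)$.

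The heart of the matter is to compute these fiber integrals and compare them with the functions $y\mapsto\chi_c(f^{-1}(y'))$ for $y'$ a regular value near $y$. Fix $y\in N$, put $\{x_1,\dots,x_s\}=\Sigma(f)\cap f^{-1}(y)$ with $x_i$ of type $\nu_i$, and take $\varepsilon>0$ small enough that the closed balls $\overline{B_\varepsilon(x_i)}$ are disjoint. Since the local singular fiber $f^{-1}(y)\cap\overline{B_\varepsilon(x_i)}$ is a compact cone with vertex $x_i$, additivity of $\chi_c$ gives $f_*\varphi^{\max}(y)=\chi_c\bigl(f^{-1}(y)\setminus\bigcup_i\overline{B_\varepsilon(x_i)}\bigr)+\sum_i c_{\nu_i}^{\max}$, and similarly with $\min$. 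For a regular value $y'$ near $y$, writing $f^{-1}(y')$ as the disjoint union of its part outside the balls and the pieces $f^{-1}(y')\cap\overline{B_\varepsilon(x_i)}$ and invoking the local triviality of $f$ over $N$ away from $\Sigma(f)$ together with local triviality at infinity (as in the proof of Lemma \ref{L1}), one obtains $\chi_c(f^{-1}(y'))=\chi_c\bigl(f^{-1}(y)\setminus\bigcup_i\overline{B_\varepsilon(x_i)}\bigr)+\sum_i\chi_c\bigl(f^{-1}(y')\cap\overline{B_\varepsilon(x_i)}\bigr)$. Since $x_i$ is of type $\nu_i$, each summand lies between $c_{\nu_i}^{\min}$ and $c_{\nu_i}^{\max}$; hence, with $\Psi^{\max}(y)$, $\Psi^{\min}(y)$ denoting the maximum and minimum of $\chi_c(f^{-1}(y'))$ over regular values $y'$ near $y$ (so that $\sum_j j\,\chi_c(N_j^{\max})=\int_N\Psi^{\max}\,d\chi_c$, and similarly for $\min$), I get the pointwise bounds $f_*\varphi^{\min}(y)\le\Psi^{\min}(y)\le\Psi^{\max}(y)\le f_*\varphi^{\max}(y)$. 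When $f$ is stable the multi-germ $f_y$ is stable and its branches are in general position, so some regular $y'$ realizes the value $c_{\nu_i}^{\max}$ (resp. $c_{\nu_i}^{\min}$) for $\chi_c(f^{-1}(y')\cap\overline{B_\varepsilon(x_i)})$ at all the $x_i$ at once, whence $f_*\varphi^{\max}=\Psi^{\max}$ and $f_*\varphi^{\min}=\Psi^{\min}$.

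Now Corollary \ref{Fubini2} applied to $\varphi^{\max}$ and to $\varphi^{\min}$ yields $\sum_\nu c_\nu^{\max}\chi_c(\nu(f))=\sum_k k\,\chi_c(\{f_*\varphi^{\max}=k\})$ and the analogue for $\varphi^{\min}$. For stable $f$ the sets $\{f_*\varphi^{\max}=j\}$ and $\{f_*\varphi^{\min}=j\}$ are precisely $N_j^{\max}$ and $N_j^{\min}$, which gives the two equalities. For a general $f$ under the standing hypotheses one still has to extract the two inequalities from the pointwise bounds $\Psi^{\max}\le f_*\varphi^{\max}$ and $f_*\varphi^{\min}\le\Psi^{\min}$ obtained above; I expect this last passage to be the main point requiring care, since going from pointwise bounds to the inequalities between the $\chi_c$-weighted sums $\sum_\nu c_\nu^{\max}\chi_c(\nu(f))$ and $\sum_j j\,\chi_c(N_j^{\max})$ amounts to controlling the locus of $N$ over which $f_y$ fails to be a stable multi-germ, and is to be treated by a finer look at the stratification of $N$.

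Finally, for the two displayed formulas assume in addition that $f$ is stable and that all its singularities are versal unfoldings of function-germs. Then $\chi_c(M)=\sum_\nu\chi_c(\nu(f))$, so subtracting the first equality gives $\chi_c(M)-\sum_j j\,\chi_c(N_j^{\max})=\sum_\nu(1-c_\nu^{\max})\chi_c(\nu(f))$. Every genotype $\sigma$ supplies two types, $\sigma^+$ with $b$ even and $\sigma^-$ with $b$ odd, while the regular stratum contributes $0$; identifying a local regular fiber with the set $F$ of Section \ref{Euler characteristics of local generic fibers} and using that $\chi_c$ of an open $m$-ball is $(-1)^m$, the explicit tables computed there give $1-c_{\sigma^+}^{\max}=s_\sigma^{\max}$ and $1-c_{\sigma^-}^{\max}=-s_\sigma^{\min}$, the sign being governed by the parity of $b$. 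This proves the first formula; replacing $N_j^{\max}$ by $N_j^{\min}$ and $\varphi^{\max}$ by $\varphi^{\min}$ interchanges $s_\sigma^{\max}$ and $s_\sigma^{\min}$ and yields the second.
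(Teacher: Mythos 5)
Your proposal follows essentially the same route as the paper: the same stratification and subset algebras, the same two constructible functions (your $\varphi^{\max},\varphi^{\min}$ are the paper's $\varphi_{\max},\varphi_{\min}$), the same Fubini argument via Corollary \ref{Fubini2}, and your fiberwise computation is exactly the paper's Lemma \ref{L3}; the final identification $1-c_{\sigma^+}^{\max}=s_\sigma^{\max}$, $1-c_{\sigma^-}^{\max}=-s_\sigma^{\min}$ (and the $\min$ analogues) via the tables of Section \ref{Euler characteristics of local generic fibers} is also the paper's step. Two small remarks. First, the simultaneous realization of the extremal local values at all the points $x_1,\dots,x_s$ in the stable case, which you justify informally by "branches in general position", is exactly what the paper pins down by citing the equivalence (i)$\Longleftrightarrow$(iii) of Lemma 1.5 in \cite{Wall2009}; with that reference your argument for the stable equalities and the two displayed formulas is complete and coincides with the paper's. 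Second, the step you leave open --- deducing the inequalities for a merely smooth $f$ from the pointwise bounds $f_*\varphi^{\min}\le\Psi^{\min}\le\Psi^{\max}\le f_*\varphi^{\max}$ --- is precisely where the paper offers nothing beyond combining Lemma \ref{L3} with the same Fubini identity, i.e.\ it integrates the pointwise inequality over $N$; your caution that this needs care (Euler-characteristic integration is not monotone, since $\chi_c$ of strata can be negative) is a legitimate observation, but it points at a terseness in the paper's own proof rather than at an idea you are missing, and no finer stratification argument appears there.
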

\begin{proof}
To get the first inequalities, we apply the same method as we did in the proof of Theorem \ref{RThm1} with the two following constructible functions $\varphi_{\max}$ and $\varphi_{\min}$:
\begin{align*}
\varphi_{\max}(x)=&\max\{\chi_c(f^{-1}(y')\cap B_\varepsilon(x)) :
y' \textrm{ is regular value nearby $f(x)$}\},\\
\varphi_{\min}(x)=&\min\{\chi_c(f^{-1}(y')\cap B_\varepsilon(x)) :
y' \textrm{ is regular value nearby $f(x)$}\}. 
\end{align*}
We also use Lemma \ref{L3} below.

When $f$ is stable, by the additivity of the Euler characteristic with compact support, we get:
$$\chi_c(M)-\sum_j \chi_c(N_j^{\max})=\sum_{\nu}(1-c_\nu^{\max}) \chi_c(\nu(f)),$$
$$\chi_c(M)-\sum_j \chi_c(N_j^{\min})=\sum_{\nu}(1-c_\nu^{\min}) \chi_c(\nu(f)).$$
If all the singularities are versal unfoldings of function-germs then
 each genotype gives rise to two singularity types $\sigma_+(f)$ and
 $\sigma_-(f)$. Using the computations done in 
Section \ref{Euler characteristics of local generic fibers}, we see that:
$$
1-c_{\sigma^+(f)}^{\max}=s_\sigma^{\max}, \ 
1-c_{\sigma^-(f)}^{\max}=-s_\sigma^{\min}, \ 
1-c_{\sigma^+(f)}^{\min}=s_\sigma^{\min}, \hbox{ and } 
1-c_{\sigma^-(f)}^{\min}=-s_\sigma^{\max}.
$$
\end{proof}
\begin{lem}\label{L3}
Let $f:M\to N$ be a smooth map such that:
\begin{itemize}
\item $\dim M-\dim N$ is even,
\item $f|_{\Sigma(f)}$ is finite, 
\item $f$ is locally trivial at infinity.
\end{itemize}
Then we have: 
\begin{align*}
f_*\varphi_{\max}(y)\ge&
\max\{\chi_c(f^{-1}(y')):y' \textrm{ a regular value
 nearby $y$}\},
\\
f_*\varphi_{\min}(y)\le&
\min\{\chi_c(f^{-1}(y')):y' \textrm{ a regular value
 nearby $y$}\}.
\end{align*}
We have the equalities when $f$ is stable. 
\end{lem}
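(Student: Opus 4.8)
The plan is to adapt the proof of Lemma~\ref{L1} to the case where $\dim M-\dim N$ is even. The only genuinely new feature is that the Euler characteristic with compact support of a small regular fiber now depends on the chosen regular value, so the exact count of Lemma~\ref{L1} degenerates into a one-sided estimate, and equality is recovered only when $f$ is stable.

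First I would localize the computation at the singular points lying in the fiber. Set $\{x_1,\dots,x_s\}=f^{-1}(y)\cap\Sigma(f)$, which is finite since $f|_{\Sigma(f)}$ is finite; choose disjoint closed balls $\overline{B_\varepsilon(x_i)}$ of sufficiently small (Milnor) radius $\varepsilon$, and a small connected neighborhood $D$ of $y$ with $\Sigma(f)\cap f^{-1}(D)\subset\cup_iB_\varepsilon(x_i)$, which is possible because $f|_{\Sigma(f)}$ is proper. Exactly as in the discussion preceding Theorem~\ref{RThm1} — apply Ehresmann's theorem to the submersion $f$ over the compact part of $f^{-1}(D)\setminus\cup_iB_\varepsilon(x_i)$ and glue it to the trivial fibration near infinity furnished by local triviality at infinity — the restriction $f:f^{-1}(D)\setminus\cup_i\overline{B_\varepsilon(x_i)}\to D$ is a locally trivial fibration. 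Together with the local conic structure of the fiber at each isolated critical point $x_i$ (which makes $\chi_c$ of the punctured cone $f^{-1}(y)\cap(\overline{B_\varepsilon(x_i)}\setminus\{x_i\})$ vanish), this produces a quantity
\[
C:=\chi_c\bigl(f^{-1}(y')\setminus\cup_i\overline{B_\varepsilon(x_i)}\bigr)=\chi_c\bigl(f^{-1}(y)\setminus\{x_1,\dots,x_s\}\bigr)
\]
that is independent of the regular value $y'\in D$.

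Next I would evaluate the two sides in terms of $C$. Every point of $f^{-1}(y)\setminus\{x_1,\dots,x_s\}$ is a regular point of $f$, where the germ of the fiber is an $(m-n)$-ball, so $\varphi_{\max}=\varphi_{\min}=(-1)^{m-n}=1$ there; hence, by additivity of the integral, $f_*\varphi_{\max}(y)=C+\sum_{i=1}^s\varphi_{\max}(x_i)$ and likewise $f_*\varphi_{\min}(y)=C+\sum_{i=1}^s\varphi_{\min}(x_i)$. On the other hand, additivity of $\chi_c$ gives, for any regular value $y'\in D$,
\[
\chi_c(f^{-1}(y'))=C+\sum_{i=1}^s\chi_c\bigl(f^{-1}(y')\cap\overline{B_\varepsilon(x_i)}\bigr),
\]
and since $\dim M-\dim N$ is even, Remark~\ref{L2} lets us replace each closed ball by the corresponding open ball, so $\chi_c(f^{-1}(y')\cap\overline{B_\varepsilon(x_i)})\le\varphi_{\max}(x_i)$ for every $i$. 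Taking the maximum over regular values $y'$ nearby $y$ yields $\max\{\chi_c(f^{-1}(y'))\}\le C+\sum_i\varphi_{\max}(x_i)=f_*\varphi_{\max}(y)$, the first inequality; the inequality for $\varphi_{\min}$ follows in the same way with the inequalities reversed.

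Finally, when $f$ is stable the multigerm $f_y:(M,\{x_1,\dots,x_s\})\to(N,y)$ is stable, so near $y$ the map is equivalent to a product of the stable monogerms at the $x_i$ with a trivial factor; a regular value $y'$ near $y$ then amounts to an independent choice of regular value near the origin for each monogerm, and consequently the local terms $\chi_c(f^{-1}(y')\cap\overline{B_\varepsilon(x_i)})$ can be driven to their maxima (resp.\ minima) simultaneously, which turns the two inequalities into equalities. I expect this last step to be the real obstacle: one must be sure there is no hidden correlation between the branches of the multigerm that would force a trade-off between the local contributions, and this is precisely where stability of the \emph{multigerm} — not merely of its monogerms — is used. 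A more routine technical point is making the first step fully rigorous, i.e.\ patching the Ehresmann fibration over the compact part to the trivial fibration near infinity.
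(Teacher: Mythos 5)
Your treatment of the two inequalities is essentially the paper's own proof: decompose $\chi_c(f^{-1}(y'))$ by additivity into the part outside the Milnor balls, identify that part with $\chi_c(f^{-1}(y)\setminus\{x_1,\dots,x_s\})$ (hence independent of $y'$, using the fibration away from the balls and the conic structure at the $x_i$), bound each local term $\chi_c(f^{-1}(y')\cap \overline{B_\varepsilon(x_i)})$ above by $\varphi_{\max}(x_i)$ (resp. below by $\varphi_{\min}(x_i)$) via Remark \ref{L2} in the even-dimensional case, and recognize the resulting right-hand side as $f_*\varphi_{\max}(y)$ (resp. $f_*\varphi_{\min}(y)$). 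This part is correct and coincides with the argument in the paper.

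The gap is exactly where you suspected it, and your proposed fix does not work as stated. A stable multigerm is not equivalent to a product of its monogerms with a trivial factor, and in any case a regular value $y'$ near $y$ is a single point of $N$ seen simultaneously by all the branches; so it does not amount to an independent choice of regular value for each monogerm. Whether $y'$ lies in a chamber maximizing the local Euler characteristic for the $i$-th branch depends on its position relative to the $i$-th local discriminant, and the claim that these positions can be prescribed independently is precisely the statement that needs proof -- phrased as you phrase it, it begs the question. What stability of the multigerm actually provides is that the discriminants (more precisely, the relevant strata) of the several branches are in general position at $y$, so that arbitrarily close to $y$ there is a single $y'$ lying simultaneously in a maximizing (resp. minimizing) chamber for every branch. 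The paper does not argue this by hand but invokes the equivalence (i)$\Longleftrightarrow$(iii) of Lemma 1.5 of \cite{Wall2009}; to complete your proof you should either quote that result or prove the general-position/simultaneity statement directly.
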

\begin{proof}
Set $\{x_1,\dots,x_s\}=f^{-1}(y)\cap\Sigma(f)$.
Take a regular value $y'$ of $f$ near $y$. Then:
\begin{align*}
\chi_c(f^{-1}(y'))
=&\chi_c(f^{-1}(y')\setminus\cup_i\overline{B_\varepsilon(x_i)})
+\medsum_i\chi_c(f^{-1}(y')\cap \overline{B_\varepsilon(x_i)})\\
=&\chi_c(f^{-1}(y)\setminus\cup_i\overline{B_\varepsilon(x_i)})
+\medsum_i\chi_c(f^{-1}(y')\cap \overline{B_\varepsilon(x_i)})\\
\le&\chi_c(f^{-1}(y)\setminus\{x_1,\dots,x_s\})
+\medsum_i\varphi_{\max}(x_i)\\
=&\int_{f^{-1}(y)\setminus\{x_1,\dots,x_s\}}\varphi_{\max}(x)d\chi_c
+\int_{\{x_1,\dots,x_s\}}\varphi_{\max}(x)d\chi_c\\
=&\int_{f^{-1}(y)}\varphi_{\max}(x)d\chi_c
=f_*\varphi_{\max}(y).
\end{align*}
When $f$ is stable, we see that the equality is attained by some $y'$ 
using the fact (i)$\Longleftrightarrow$(iii) of \cite[Lemma 1.5]{Wall2009}. 

Similarly we obtain: 
\begin{align*}
\chi_c(f^{-1}(y'))
=&\chi_c(f^{-1}(y')\setminus\cup_i\overline{B_\varepsilon(x_i)})
+\medsum_i\chi_c(f^{-1}(y')\cap \overline{B_\varepsilon(x_i)})\\
=&\chi_c(f^{-1}(y)\setminus\cup_i\overline{B_\varepsilon(x_i)})
+\medsum_i\chi_c(f^{-1}(y')\cap \overline{B_\varepsilon(x_i)})\\
\ge&\chi_c(f^{-1}(y)\setminus\{x_1,\dots,x_s\})
+\medsum_i\varphi_{\min}(x_i)\\
=&\int_{f^{-1}(y)\setminus\{x_1,\dots,x_s\}}\varphi_{\min}(x)d\chi_c
+\int_{\{x_1,\dots,x_s\}}\varphi_{\min}(x)d\chi_c\\
=&\int_{f^{-1}(y)}\varphi_{\min}(x)d\chi_c
=f_*\varphi_{\min}(y). 
\end{align*}
When $f$ is stable, we see that the equality is attained by some $y'$ 
using the fact (i)$\Longleftrightarrow$(iii) of \cite[Lemma 1.5]{Wall2009}. 
\end{proof}
Now let us apply this theorem to the case of a map having at worst $D_n$
singularities. Using the computations in Section \ref{Euler characteristics of local generic fibers}, 
we see that:
$$
1-c_\nu^{\max}
=
\begin{cases}
-k&\textrm{ if } x\in A_k^+(f),\\
-1&\textrm{ if } x\in A_k^-(f),\ k\textrm{  odd},\\
0&\textrm{ if } x\in A_k^-(f),\ k\textrm{  even},\\
k&\textrm{ if } x\in D_k^-(f),\ k\textrm{  even},\\
k-2&\textrm{ if } x\in D_k^+(f),\ k\textrm{  even},\\
k-2&\textrm{ if } x\in D_k(f),\ k\textrm{  odd},
\end{cases}
\qquad
1-c_\nu^{\min}
=
\begin{cases}
k&\textrm{ if } x\in A_k^-(f)\\
1&\textrm{ if } x\in A_k^+(f),\ k\textrm{  odd},\\
0&\textrm{ if } x\in A_k^+(f),\ k\textrm{   even},\\
-k&\textrm{ if } x\in D_k^-(f),\ k\textrm{  even},\\
2-k&\textrm{ if } x\in D_k^+(f),\ k\textrm{  even},\\
2-k&\textrm{ if } x\in D_k(f),\ k\textrm{  odd}.
\end{cases}
$$
\begin{cor}\label{RMorin2}
If the map $f$ satisfies the assumptions of 
Theorem \ref{RThm1C} and has at worst $D_n$ singularities then:
\footnotesize
\begin{align*}
\chi_c(M)-\sum_jj\chi_c(N_j^{\max})
=&
-\medsum_{k>0}k\chi_c(A^+_k(f))
-\medsum_{k: \textrm{odd}}\chi_c(A_k^-(f))
+\sum_{k}(k-2)\chi_c(D_k(f))
+2\sum_{k: \e}\chi_c(D_k^-(f))
\\
\chi_c(M)-\sum_jj\chi_c(N_j^{\min})
=&
\medsum_{k>0}k\chi_c(A_k^-(f))
+\medsum_{k :\textrm{odd}}\chi_c(A_k^+(f))
+\sum_{k}(2-k)\chi_c(D_k(f))
-2\sum_{k: \e}\chi_c(D_k^-(f))
\end{align*}
\end{cor}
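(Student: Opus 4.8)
The plan is to read the corollary off directly from the last (``versal unfoldings of function-germs'') part of Theorem \ref{RThm1C}. The first thing I would check is that this part applies: every singularity type occurring in a map with at worst $D_n$ singularities is a versal unfolding of a function-germ, since in the notation of \eqref{PreNormalForm} the relevant genotypes have $j=0$ (the $A_k$ genotype being the one-variable function $x\mapsto x^{k+1}$, the $D_k^{\pm}$ genotype the two-variable function $(x_1,x_2)\mapsto x_1(x_1^{k-2}\pm x_2^2)$), so after adjoining the nondegenerate quadratic $Q(z)$ and the unfolding parameters one gets an unfolding of a function-germ. Granting this, Theorem \ref{RThm1C} yields
$$
\chi_c(M)-\sum_j j\,\chi_c(N_j^{\max})=\sum_{\sigma}\bigl[s_\sigma^{\max}\chi_c(\sigma^+(f))-s_\sigma^{\min}\chi_c(\sigma^-(f))\bigr]
$$
together with its $\min$-analogue, and, using the identities $1-c_{\sigma^+(f)}^{\max}=s_\sigma^{\max}$, $1-c_{\sigma^-(f)}^{\max}=-s_\sigma^{\min}$, $1-c_{\sigma^+(f)}^{\min}=s_\sigma^{\min}$, $1-c_{\sigma^-(f)}^{\min}=-s_\sigma^{\max}$ recorded in its proof, the right-hand sides become $\sum_\nu(1-c_\nu^{\max})\chi_c(\nu(f))$ and $\sum_\nu(1-c_\nu^{\min})\chi_c(\nu(f))$, the sums running over all singularity types $\nu$ of $f$.

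Next I would substitute the explicit values of $1-c_\nu^{\max}$ and $1-c_\nu^{\min}$ tabulated just before the statement; these come from the $A_k$- and $D_k$-computations of Section \ref{Euler characteristics of local generic fibers} and, in the $D_k$ case, rest on O.~Viro's classification of smoothings \cite{Viro1990}. For the $\max$-identity this gives the contributions $-k\,\chi_c(A_k^+(f))$ from $A_k^+$, the contribution $-\chi_c(A_k^-(f))$ from $A_k^-$ with $k$ odd and $0$ from $A_k^-$ with $k$ even, the contribution $k\,\chi_c(D_k^-(f))$ from $D_k^-$ with $k$ even, and $(k-2)$ times the respective $\chi_c$ from $D_k^+$ with $k$ even and from $D_k$ with $k$ odd.

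The only slightly delicate point is repackaging the $D$-contributions into the form displayed in the statement. Adopting the convention $D_k(f)=D_k^+(f)\sqcup D_k^-(f)$ for $k$ even, so that $\chi_c(D_k(f))=\chi_c(D_k^+(f))+\chi_c(D_k^-(f))$ by additivity of $\chi_c$, the even-$k$ contribution $(k-2)\chi_c(D_k^+(f))+k\,\chi_c(D_k^-(f))$ together with the odd-$k$ contribution $(k-2)\chi_c(D_k(f))$ rearranges as $\sum_k(k-2)\chi_c(D_k(f))+2\sum_{k:\,\e}\chi_c(D_k^-(f))$, and collecting the $A$-terms yields the first formula; the second follows identically from the $1-c_\nu^{\min}$ column, the $D$-part now assembling into $\sum_k(2-k)\chi_c(D_k(f))-2\sum_{k:\,\e}\chi_c(D_k^-(f))$. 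I do not expect a genuine obstacle: all the analytic content already lives in Theorem \ref{RThm1C} and in the earlier Euler-characteristic computations for $D_k$-germs via Viro's smoothing list, so what remains is bookkeeping: tracking which of $\sigma^{\pm}$ carries $s_\sigma^{\max}$ and which carries $-s_\sigma^{\min}$, keeping the even and odd cases of $k$ apart, and using the $D_k=D_k^+\sqcup D_k^-$ convention consistently.
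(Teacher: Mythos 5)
Your proposal is correct and is exactly the paper's argument: the printed proof consists of one line, "combine Theorem \ref{RThm1C} with the tabulated values of $1-c_\nu^{\max}$ and $1-c_\nu^{\min}$," which is precisely what you do, only with the bookkeeping (including the convention $\chi_c(D_k(f))=\chi_c(D_k^+(f))+\chi_c(D_k^-(f))$ for $k$ even) written out explicitly. No discrepancy to report.
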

\begin{proof}
Combine the previous theorem with
the above expressions of $\varphi_{\max}$ and $\varphi_{\min}$.
\end{proof}

\begin{cor}\label{RMorin3}
Assume that a map $f$ satisfies the assumptions of Theorem \ref{RThm1C}
 and has at worst $A_n$ singularities.\\
When $\dim N=1$, we have:
$$
\sum_jj\chi_c(N_j^{\max})=\chi_c(M)+\chi_c(A_1(f)),\qquad
\sum_jj\chi_c(N_j^{\min})=\chi_c(M)-\chi_c(A_1(f)), 
$$
and thus:
$$
\medsum_{j}\tfrac{j}2[\chi_c(N^{\max}_j)+\chi_c(N^{\min}_j)]=
\chi_c(M),
$$
$$
\medsum_{j}\tfrac{j}2[\chi_c(N^{\max}_j)-\chi_c(N^{\min}_j)]=
\chi_c(A_1(f))=\chi_c(\Sigma(f)).
$$
When $\dim N=2$, we have:
\begin{align*}
\sum_jj\chi_c(N_j^{\max})
=&\chi_c(M)
+\chi_c(A_1(f))+2\#(A^+_2(f)),\\
\sum_jj\chi_c(N_j^{\min})
=&\chi_c(M)
-\chi_c(A_1(f))-2\#(A_2^-(f)),
\end{align*}
and thus:
$$
\medsum_{j}\tfrac{j}2[\chi_c(N^{\max}_j)+\chi_c(N^{\min}_j)]=
\chi_c(M)+\#(A^+_2(f))-\#(A^-_2(f)),
$$
$$
\medsum_{j}\tfrac{j}2[\chi_c(N^{\max}_j)-\chi_c(N^{\min}_j)]=
\chi_c(A_1(f))+\#(A_2(f))=\chi_c(\Sigma(f)).
$$
When $\dim N=3$, we have:
\begin{align*}
\sum_jj\chi_c(N_j^{\max})
=&\chi_c(M)
+\chi_c(A_1(f))+2\chi_c(A^+_2(f))+\#(A_3(f))+2\#(A_3^+(f)),\\
\sum_jj\chi_c(N_j^{\min})
=&\chi_c(M)
-\chi_c(A_1(f))-2\chi_c(A_2^-(f))-\#(A_3(f))-2\#(A_3^-(f)), 
\end{align*}
and thus:
$$
\medsum_{j}\tfrac{j}2[\chi_c(N^{\max}_j)+\chi_c(N^{\min}_j)]
=\chi_c(M)+\chi_c(A^+_2(f))-\chi_c(A^-_2(f))+\#(A^+_3(f))-\#(A^-_3(f)),
$$
$$
\medsum_{j}\tfrac{j}2[\chi_c(N^{\max}_j)-\chi_c(N^{\min}_j)]
=\chi_c(A_1(f))+\chi_c(A_2(f))+2\#(A_3(f))
=\chi_c(\Sigma(f))+\#(A_3(f)).
$$
\end{cor}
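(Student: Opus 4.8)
The plan is to deduce all the equalities from Theorem~\ref{RThm1C}, most conveniently in the guise of Corollary~\ref{RMorin2} specialized to the case where no $D_k$ singularity is present. That specialization gives exactly the two identities
\begin{align*}
\chi_c(M)-\sum_j j\,\chi_c(N_j^{\max})
&=-\sum_{k>0}k\,\chi_c(A_k^+(f))-\sum_{k:\odd}\chi_c(A_k^-(f)),\\
\chi_c(M)-\sum_j j\,\chi_c(N_j^{\min})
&=\sum_{k>0}k\,\chi_c(A_k^-(f))+\sum_{k:\odd}\chi_c(A_k^+(f)),
\end{align*}
and I expect all the equalities in the statement to drop out of these by substitution and elementary manipulation.

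First I would record which $A_k$-strata can occur. From the normal form of \S\ref{Morin} (the polynomial $g_c$ there involves $k-1$ parameters, and inside $\Sigma(f)$ the condition of being an $A_k$-point has codimension $k-1$) one obtains $\dim A_k(f)=\dim N-k$. Hence for $\dim N=1$ only $A_1(f)$ is nonempty; for $\dim N=2$ only $A_1(f)$ and $A_2(f)$, the latter a finite set; and for $\dim N=3$ only $A_1(f),A_2(f),A_3(f)$, with $A_3(f)$ finite. On a $0$-dimensional stratum $\chi_c$ equals the cardinality, so I may replace $\chi_c(A_2^\pm(f))$ by $\#(A_2^\pm(f))$ when $\dim N=2$ and $\chi_c(A_3^\pm(f))$ by $\#(A_3^\pm(f))$ when $\dim N=3$.

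Next I would substitute the admissible values of $k$ into the two identities and simplify. The only algebraic ingredients are additivity of $\chi_c$ over the locally closed decomposition $\Sigma(f)=\bigsqcup_k A_k(f)$ with $A_k(f)=A_k^+(f)\sqcup A_k^-(f)$, giving $\chi_c(\Sigma(f))=\sum_k\bigl(\chi_c(A_k^+(f))+\chi_c(A_k^-(f))\bigr)$, together with the rewriting $k\,\chi_c(A_k^+(f))+\chi_c(A_k^-(f))=\chi_c(A_k(f))+(k-1)\chi_c(A_k^+(f))$. For instance, for $\dim N=3$ the first identity yields a right-hand side $-\chi_c(A_1(f))-2\chi_c(A_2^+(f))-3\#(A_3^+(f))-\#(A_3^-(f))$, and $3\#(A_3^+(f))+\#(A_3^-(f))=\#(A_3(f))+2\#(A_3^+(f))$ converts this into the claimed expression for $\sum_j j\,\chi_c(N_j^{\max})$; the cases $\dim N=1,2$ and the companion $\min$-identities go the same way. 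Finally the half-sum and half-difference relations follow by adding, respectively subtracting, the two formulas in each dimension and dividing by $2$; in the half-difference the right-hand side is $\chi_c(\Sigma(f))$, with an extra $\#(A_3(f))$ when $\dim N=3$ because there the $0$-dimensional top stratum contributes to $\chi_c(\Sigma(f))$ with coefficient $1$ but enters that combination with total coefficient $2$.

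I do not anticipate any genuine obstacle: the analytic content is entirely in Theorem~\ref{RThm1C} and in the values of $s_\sigma^{\max},s_\sigma^{\min}$ already computed in Section~\ref{Euler characteristics of local generic fibers}. The only steps needing care are the sign-bookkeeping and the dimension count $\dim A_k(f)=\dim N-k$, which is exactly what licenses passing from $\chi_c$ to $\#$ on the top strata and from $\chi_c(A_1(f))$ to $\chi_c(\Sigma(f))$.
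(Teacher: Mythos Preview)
Your proposal is correct and follows the same route that the paper implicitly takes: the paper gives no separate proof of this corollary, treating it as an immediate consequence of Corollary~\ref{RMorin2} (equivalently, of the table of values $1-c_\nu^{\max}$, $1-c_\nu^{\min}$ displayed just before it). Your substitution of the admissible strata for each value of $\dim N$, the passage from $\chi_c$ to $\#$ on the $0$-dimensional top stratum, and the half-sum/half-difference manipulation are exactly what is needed, and your bookkeeping checks out in each case.
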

\subsection{Case $m-n=0$}
Here we assume that $M$ and $N$ are oriented and have the same dimension
$n$. If a point $x$ in $M$ is of type $\nu$, we denote by $d_\nu$ the
local topological degree of the map-germ 
$f:(M,x)\to(N,f(x))$. We assume that $f$ is finite-to-one and that $f$
is locally trivial at infinity. In this situation, it is possible to
define the mapping degree of $f$ as follows:
$$
\deg f =\sum_{x \in f^{-1}(y)} \deg \left( f: (M,x) \to (N,f(x))
\right),
$$
where $y$ is a regular value of $f$.
\begin{thm}\label{RThm1D}
Assume that a map $f:M\to N$ is finite-to-one, locally trivial at
 infinity and has finitely many singularity types. We also assume that
 $M$ and $N$ are oriented and that $N$ is connected. Then:
$$
\medsum_\nu d_\sigma\chi_c(\nu(f))
=(\deg f)\chi_c(N),
$$
provided that  the $\chi_c(\nu(f))$'s are finite.
\end{thm}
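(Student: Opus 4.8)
The plan is to run the same argument as in the proof of Theorem~\ref{RThm1}, with the constructible function ``Euler characteristic of the nearby local fibre'' replaced by the ``local topological degree''. First I would refine the stratification of $M$ by singularity type so that on each stratum $\nu(f)$ the local degree $d_\nu=\deg\bigl(f:(M,x)\to(N,f(x))\bigr)$ is constant; because $f$ has finitely many singularity types this costs only a finite refinement (the local degree is locally constant in families of finite maps, so, under the standing finiteness assumptions, it suffices to pass to the connected components of the existing strata). The one genuinely new feature is that the regular locus has to be split into the two open sets $M^{+}$ and $M^{-}$ on which $\det df$ is positive, respectively negative, with respect to the chosen orientations; there $d_\nu=+1$, respectively $d_\nu=-1$. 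Then, exactly as for Theorem~\ref{RThm1}, I let $\mathcal S(M)$ and $\mathcal S(N)$ be the subset algebras generated by these strata together with the fibres of $f$, so that $(\mathcal S(M),\mathcal S(N))$ fits $f$, and I take $\mu=\chi_c$ on both sides. The constructible function
$$
\varphi(x)=\deg\bigl(f:(M,x)\to(N,f(x))\bigr)=\sum_\nu d_\nu\BS{1}_{\nu(f)}(x)
$$
then has $\int_M\varphi\,d\chi_c=\sum_\nu d_\nu\,\chi_c(\nu(f))$, so the right-hand side of the asserted formula is what Viro's calculus computes.

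The core step is the analogue of Lemma~\ref{L1}: \emph{for every $y\in N$ one has $f_*\varphi(y)=\deg f$}. Since $f$ is finite-to-one, $f^{-1}(y)=\{x_1,\dots,x_s\}$ is finite, whence $f_*\varphi(y)=\int_{f^{-1}(y)}\varphi\,d\chi_c=\sum_{i=1}^s\varphi(x_i)$. Choosing pairwise disjoint balls $\overline{B_\varepsilon(x_i)}$, small enough to compute the local degrees at the $x_i$, I claim that every regular value $y'$ close enough to $y$ satisfies $f^{-1}(y')\subset\bigcup_iB_\varepsilon(x_i)$. Indeed, Definition~\ref{Def:localTriv} supplies a compact $K\subset M$ and an open neighbourhood $D$ of $y$ with $f:(M\setminus K)\cap f^{-1}(D)\to D$ trivial; enlarging $K$ so that $f^{-1}(y)\subset K$, triviality forces $f^{-1}(y')\cap(M\setminus K)\cong f^{-1}(y)\cap(M\setminus K)=\emptyset$ for $y'\in D$, i.e.\ $f^{-1}(y')\subset K$. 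Moreover $f\bigl(K\setminus\bigcup_iB_\varepsilon(x_i)\bigr)$ is compact and misses $y$, so after shrinking $D$ I may also assume it misses $f^{-1}(y')$, which proves the claim. Taking now $y'\in D$ a regular value, the definition of $\deg f$ and of the local topological degree of a map-germ give
$$
\deg f=\sum_{x'\in f^{-1}(y')}\deg\bigl(f:(M,x')\to(N,y')\bigr)
=\sum_{i=1}^s\ \sum_{x'\in f^{-1}(y')\cap B_\varepsilon(x_i)}\deg\bigl(f:(M,x')\to(N,y')\bigr)
=\sum_{i=1}^s d_{\nu(x_i)}=\sum_{i=1}^s\varphi(x_i),
$$
which is the claimed identity.

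Finally I would feed this into Viro's calculus: since $f_*\varphi$ is the constant $\deg f$ on $N$, Corollary~\ref{Fubini3}, applied with $X_i=\{x\in M:\varphi(x)=i\}$ and $\mu_X=\mu_Y=\chi_c$, gives $\sum_i i\,\chi_c(X_i)=(\deg f)\,\chi_c(N)$; and since the $\nu(f)$ are disjoint this is exactly
$$
\sum_\nu d_\nu\,\chi_c(\nu(f))=(\deg f)\,\chi_c(N).
$$
Here the index $\nu$ runs over all strata of the refined stratification, including the regular strata $M^{\pm}$, so the left-hand side really does contain the term $\chi_c(M^{+})-\chi_c(M^{-})$; this inclusion is forced, as one sees by testing on $f=\mathrm{id}$.

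I expect the only real obstacle to be the lemma $f_*\varphi\equiv\deg f$, and within it the ``no escape at infinity'' point — but that is precisely what the hypothesis of local triviality at infinity is there to guarantee, and the rest is the classical additivity of the local degree over the finitely many preimages in a fibre. The remaining ingredients are routine: the local constancy of $d_\nu$ on the refined strata, and the multiplicativity $\chi_c(\text{total})=\chi_c(\text{fibre})\cdot\chi_c(\text{base})$ of $\chi_c$ along locally trivial fibrations, which is part of the statement that $\chi_c$ fits $f$ and is already used in Theorem~\ref{RThm1}.
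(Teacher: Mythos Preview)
Your proof is correct and follows essentially the same approach as the paper: define $\varphi(x)=\deg\bigl(f:(M,x)\to(N,f(x))\bigr)$, observe that $f_*\varphi\equiv\deg f$, and apply Viro's Fubini (the paper cites Corollary~\ref{Fubini2}, you cite Corollary~\ref{Fubini3}, but since $f_*\varphi$ is constant these amount to the same thing). The paper's proof simply ``remarks'' that $f_*\varphi(y)=\deg f$; your detailed justification of this via local triviality at infinity and additivity of the local degree is exactly what that remark is hiding, so you have filled in the only nontrivial step.
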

\begin{proof}
We consider the stratification of $f$ defined by the types of 
singularities (see Nakai's paper \cite[\S1]{Nakai2000}) and we define 
$\mathcal S(M)$, $\mathcal S(N)$ 
as the subset algebras generated by the strata and fibers of $f$.   
Then $(\mathcal S(X),\mathcal S(Y))$ fits to the map $f$. 
Set $\mu_X=\chi_c$, $\mu_Y=\chi_c$ and:
$$
\varphi(x)= \deg \left( f: (M,x) \to (N,f(x)) \right).
$$
Applying Corollary \ref{Fubini2} for $\varphi$ and remarking that $f_*\varphi(y)=\deg f$, we obtain the result.
\end{proof}
If $x$ is a point of type $A_k$ with $k$ even, we say that $x$ belongs to $A_k^+(f)$ (resp. $A_k^-(f)$) if $\deg\{f:(M,x)\to(N,f(x))\}=1$ (resp. $-1$).

\begin{cor}\label{RMorin4}
Assume that $f$ satisfies the assumptions of Theorem \ref{RThm1D} and has at worst $A_n$ singularities. Then we have:
$$
\medsum_{k: \e}
\bigl[\chi_c(A_k^+(f))-\chi_c(A_k^-(f))\bigr]
=(\deg f)\chi_c(N).
$$
\end{cor}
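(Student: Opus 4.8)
The plan is to read off Corollary~\ref{RMorin4} from Theorem~\ref{RThm1D} once one knows the local topological degree $d_\nu$ of a stable map-germ $f_x\colon(M,x)\to(N,f(x))$ of type $A_k$ in the equidimensional case $m=n$. First I would note that having at worst $A_n$ singularities guarantees finitely many singularity types, so Theorem~\ref{RThm1D} applies and gives $\medsum_\nu d_\nu\,\chi_c(\nu(f))=(\deg f)\,\chi_c(N)$, the sum running over the types $\nu$ occurring among the germs $f_x$ ($x\in M$), namely $A_0$ (regular points), $A_1,\dots,A_n$.

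Next I would compute $d_{A_k}$. Up to local diffeomorphisms of source and target one may take an $A_k$-germ ($0\le k\le n$) to be $(x_1,\dots,x_{n-1},t)\mapsto(x_1,\dots,x_{n-1},g(x,t))$ with $g(x,t)=t^{k+1}+x_1t^{k-1}+\dots+x_{k-1}t$, so $\det Df_x=\partial g/\partial t$. Choosing a regular value $(x_0,\varepsilon_0)$ with $x_0$ small and generic, one has $d_{A_k}=\medsum_t\sign\bigl(\tfrac{\partial g}{\partial t}(x_0,t)\bigr)$ over the finitely many solutions $t$ of $g(x_0,t)=\varepsilon_0$ inside the chosen small ball, which is just the local degree at the origin of the one-variable germ $t\mapsto g(x_0,t)$. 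This polynomial has degree $k+1$ with positive leading coefficient, so if $k$ is odd it goes to $+\infty$ at both ends, $\partial g/\partial t$ changes sign, the outermost two preimages of $\varepsilon_0$ have slopes of opposite sign and the others cancel in pairs, giving $d_{A_k}=0$; if $k$ is even it goes to $\pm\infty$ at $\pm\infty$ while $\partial g/\partial t>0$ for $|t|$ large, so the outermost preimages both have positive slope and the rest again cancel in pairs, giving $d_{A_k}\in\{+1,-1\}$. By the convention fixed just before the corollary, the $k$-even points with $d=+1$ (resp.\ $d=-1$) are exactly those of $A_k^+(f)$ (resp.\ $A_k^-(f)$); in particular for $k=0$ the sets $A_0^\pm(f)$ are the regular loci on which $f$ is orientation-preserving, resp.\ orientation-reversing.

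Substituting these values into Theorem~\ref{RThm1D}, the odd-$k$ types contribute nothing and the type $A_k$ with $k$ even contributes $\chi_c(A_k^+(f))-\chi_c(A_k^-(f))$, whence $\medsum_{k:\e}\bigl[\chi_c(A_k^+(f))-\chi_c(A_k^-(f))\bigr]=(\deg f)\,\chi_c(N)$, as claimed. The only real work, and the one step to carry out with care, is the local degree computation together with the orientation bookkeeping (in particular remembering that the $k=0$ term in the sum is contributed by the regular points); granting that, the corollary is merely the specialization of Theorem~\ref{RThm1D} to $A_n$-maps.
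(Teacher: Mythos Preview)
Your proposal is correct and follows the same route as the paper: apply Theorem~\ref{RThm1D} and use that the local degree of an $A_k$-germ is $0$ when $k$ is odd, while for $k$ even the definition of $A_k^\pm(f)$ given just before the corollary records the sign of the degree. The paper's proof simply states the odd-$k$ fact without justification; your explicit computation via the normal form is a welcome addition, though it can be streamlined by taking $x_0=0$ so that one is just computing the degree of $t\mapsto t^{k+1}$ at the origin.
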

\begin{proof}
Apply the previous theorem and the fact that 
$\deg\{f:(M,x)\to(N,f(x))\}=0$ if $x \in A_k(f)$, $k$ odd.
\end{proof}

The map $f:(\BB{R}^4,0)\to(\BB{R}^4,0)$ is an $I_{2,2}^\pm$ singularity  if $f$ is defined by:
$$
(x,y,a,b)\mapsto(x^2\pm y^2+ax+by,xy,a,b). 
$$
This is the only singularity of stable-germs which is not a Morin
singularity from $\BB{R}^4$ to $\BB{R}^4$. We can state:
\begin{cor}\label{RMorin5}
Assume that $f$ satisfies the assumptions of Theroem \ref{RThm1D} and that $n=4$. Then we have:
$$
\medsum_{k: \e}
\bigl[\chi_c(A_k^+(f))-\chi_c(A_k^-(f))\bigr]
+2\#(I_{2,2}^-(f))=(\deg f) \chi_c(N).
$$
\end{cor}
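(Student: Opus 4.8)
The plan is to apply Theorem~\ref{RThm1D} and then to evaluate the local degree $d_\nu$ of each singularity type that occurs. Since $(4,4)$ is a pair of nice dimensions, the relevant germs of $f$ are the stable ones from $\BB{R}^4$ to $\BB{R}^4$: the regular type, the Morin types $A_1,A_2,A_3,A_4$, and the non-Morin type $I_{2,2}^{\pm}$ recalled just before the statement. So in $\medsum_\nu d_\nu\chi_c(\nu(f))=(\deg f)\chi_c(N)$ only these types occur, and it suffices to compute the $d_\nu$. For the odd Morin types $d_{A_1}=d_{A_3}=0$, which is the fact already used in the proof of Corollary~\ref{RMorin4} (in a normal form $(x_1,x_2,x_3,t)\mapsto(x_1,x_2,x_3,P(t))$ with $\deg P$ even, the real roots of $P(t)=s$ pair up with opposite signs of $P'(t)$); for the even Morin types an $A_k$ point has local degree $\pm1$ — the normal form $P$ has odd degree $k+1$ — which is exactly the content of the decoration $A_k^{\pm}(f)$ given just above the corollary, so $d_{A_2^{\pm}}=d_{A_4^{\pm}}=\pm1$.

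Hence everything reduces to computing $d_{I_{2,2}^{\pm}}$. For the model $(x,y,a,b)\mapsto(x^{2}\pm y^{2}+ax+by,\,xy,\,a,\,b)$ I would fix a regular value $(u,v,a,b)$ near $0$ with $v\neq0$; since $xy=v$ forces $y=v/x$, the preimages lying in a fixed small ball about $0$ are the points $(x,v/x)$ with $x$ a real root of
$$
x^{4}+ax^{3}-ux^{2}+bvx\pm v^{2}=0
$$
(the sign being that of the normal form), a short estimate on $|x|$ and $|v/x|$ ensuring that, once the value is small enough, each real root gives exactly one preimage in the ball and none escapes to its boundary. The Jacobian of the model at $(x,v/x)$, cleared of its positive denominator $x^{2}$, equals $2x^{4}+ax^{3}-bvx\mp2v^{2}$. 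Writing $w=x^{2}$, the quartic reads to leading order $w^{2}-uw\pm v^{2}=0$. For $I_{2,2}^{+}$ the product of the roots of this quadratic is $v^{2}>0$, so there are $0$ or $2$ positive roots $w_{1}>w_{2}>0$; when they exist, the cleared Jacobian at $x=\pm\sqrt{w_{1}}$ and $x=\pm\sqrt{w_{2}}$ equals, to leading order, $2(w^{2}-w_{1}w_{2})$, positive at $w_{1}$ and negative at $w_{2}$, so two preimages count $+1$ and two count $-1$ and $d_{I_{2,2}^{+}}=0$. For $I_{2,2}^{-}$ the product of the roots is $-v^{2}<0$, so there is a single positive root $w_{+}$ and $x=\pm\sqrt{w_{+}}$; at both points the cleared Jacobian equals $2(w_{+}^{2}+v^{2})>0$ to leading order, so $d_{I_{2,2}^{-}}=2$ (with the standard orientations of $\BB{R}^4$, which is the convention under which the statement is phrased).

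Substituting these values into Theorem~\ref{RThm1D} annihilates the contributions of $A_1(f)$, $A_3(f)$ and $I_{2,2}^{+}(f)$ and leaves
$$
\medsum_{k: \e}\bigl[\chi_c(A_k^{+}(f))-\chi_c(A_k^{-}(f))\bigr]+2\,\chi_c(I_{2,2}^{-}(f))=(\deg f)\chi_c(N);
$$
since $I_{2,2}^{-}(f)$ is a discrete subset of $M$, $\chi_c(I_{2,2}^{-}(f))=\#(I_{2,2}^{-}(f))$, which is the asserted identity. The main obstacle is exactly the local-degree computation for $I_{2,2}^{\pm}$: one must check that the real roots of the displayed quartic parametrise \emph{all} preimages in a fixed small ball (so that none escapes to the boundary as the value shrinks — this is what the size estimates on $x$ and $v/x$ control) and that the Jacobian sign is genuinely constant along those roots, the odd-degree coefficients $a,b$ being of strictly lower order than the terms that govern the root count and the signs.
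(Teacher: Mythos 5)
Your argument is correct and follows the paper's route exactly: apply Theorem \ref{RThm1D}, use that the local degree is $0$ for odd $A_k$ points and $\pm1$ for $A_k^{\pm}$ points with $k$ even, and that an $I_{2,2}^{-}$ (resp.\ $I_{2,2}^{+}$) point has local degree $2$ (resp.\ $0$), the paper simply asserting these last two values where you verify them from the normal form. Your explicit computation is sound (and the slight looseness about the terms in $a,b$ is harmless, since the degree is independent of the chosen regular value, so one may take $a=b=0$), so this is essentially the paper's proof with the degree remark spelled out.
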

\begin{proof}
Remark that the mapping degree of  $f_x$ is 2 (resp. 0) when $x$ is an
$I_{2,2}^-$ ($I_{2,2}^+$) point. 
\end{proof}
A similar discussion shows the following:
\begin{thm}
Assume that a map $f:M\to N$ is finite-to-one, locally trivial at infinity and has finitely many singularity types. We assume that $M$ or $N$ may not be orientable and that $N$ is connected.
Then:
$$
\medsum_\sigma d_\sigma\chi_c(M_\sigma(f))
\equiv(\deg f)\chi_c(N)\pmod{2}.
$$
\end{thm}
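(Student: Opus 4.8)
The proof runs exactly parallel to that of Theorem \ref{RThm1D}, the only genuinely new point being that when orientations are not available the local topological degree of a finite map-germ $f:(M,x)\to(N,f(x))$ between manifolds of equal dimension is still well defined as an element of $\BB{Z}/2\BB{Z}$: it is the reduction mod $2$ of the number of preimages, inside a small ball $\overline{B_\varepsilon(x)}$, of a regular value of $f$ close to $f(x)$. This invariant depends only on the singularity type $\sigma$ of $f$ at $x$, and we denote it by $d_\sigma$; on the regular part it equals $1$. Likewise $\deg f:=\medsum_{x\in f^{-1}(y)}d_{\sigma(x)}\in\BB{Z}/2\BB{Z}$ for a regular value $y$ makes sense without orientability. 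The plan is to feed these data into Viro's calculus over $R=\BB{Z}/2\BB{Z}$.

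Concretely, I would take $\mu_M=\mu_N$ to be the mod $2$ Euler characteristic, which satisfies the axioms of Section \ref{Geometric integration} by Example \ref{EulerChar}. As in the proofs of Theorems \ref{RThm1} and \ref{RThm1D}, stratify $M$ and $N$ by the types of the mono- and multi-germs of $f$ (following \cite[\S1]{Nakai2000}) and let $\mathcal S(M)$, $\mathcal S(N)$ be the subset algebras generated by these strata together with the fibers of $f$; then $(\mathcal S(M),\mathcal S(N))$ fits $f$. The function $\varphi(x)=d_{\sigma(x)}\in\BB{Z}/2\BB{Z}$ is constant on each singular stratum and identically $1$ on the regular locus, so $\varphi\in\Cons(M,\mathcal S(M),\BB{Z}/2\BB{Z})$. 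Since $f$ is finite-to-one, $f^{-1}(y)$ is finite for every $y$, whence $f_*\varphi(y)=\int_{f^{-1}(y)}\varphi\,d\mu_M=\medsum_{x\in f^{-1}(y)}d_{\sigma(x)}$, which for a regular value is $(\deg f)\bmod 2$. Granting that this quantity is in fact a global constant $d=(\deg f)\bmod 2$ on $N$ (see below), Corollary \ref{Fubini3} applies: writing $X_1=\{x\in M:d_{\sigma(x)}=1\}$ and decomposing it into singular strata gives $\mu_M(X_1)=\medsum_\sigma d_\sigma\,\chi_c(M_\sigma(f))$ in $\BB{Z}/2\BB{Z}$, so Corollary \ref{Fubini3} yields
$$
\medsum_\sigma d_\sigma\,\chi_c(M_\sigma(f))\equiv(\deg f)\,\chi_c(N)\pmod 2 ,
$$
which is the assertion.

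The one step that needs care, and which I expect to be essentially the only obstacle, is the claim that $f_*\varphi$ is constant on all of $N$. For a regular value this is the homotopy invariance of the mod $2$ Brouwer degree: here the hypothesis that $f$ is locally trivial at infinity ensures that no preimage escapes to infinity as the regular value moves, and finiteness of the fibers makes the signed (mod $2$) count locally constant, hence constant since $N$ is connected. For a critical value $y$ one passes to a nearby regular value $y'$ lying in a trivializing neighborhood and uses that the sum of the local mod $2$ degrees over $f^{-1}(y)\cap\Sigma(f)$ accounts exactly for the change in the fiber count, by the same localization argument as in the proof of Lemma \ref{L1}. Once this is in place the rest is purely formal.
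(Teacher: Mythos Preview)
Your proposal is correct and is precisely the ``similar discussion'' the paper alludes to: the paper does not give a separate proof for this statement but simply notes that the argument of Theorem~\ref{RThm1D} carries over with $R=\BB{Z}/2\BB{Z}$, which is exactly what you have written out. Your extra care in justifying that $f_*\varphi$ is constant on all of $N$ (including at critical values, via the localization argument of Lemma~\ref{L1}) fills in the one detail the paper leaves implicit.
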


\lsection{Applications to Morin maps}\label{SecMorin}
In this section, we apply the results of the previous section to Morin
maps. We will consider three different settings : Morin maps
from a compact manifold $M$ to a connected manifold $N$ such that $\dim
M-\dim N$ is odd, Morin maps from a compact manifold $M$ to a
connected manifold $N$ with $\dim M=\dim N$, Morin perturbations of
smooth map-germs.

\subsection{Morin maps from $M^m$ to $N^n$, $m-n$ odd}
Let $f : M^m \to N^n$ be a Morin mapping from a compact $m$-dimensional
manifold $M$ to a connected $n$-dimensional manifold $N$.

Let us recall that a point $p$ in $M$ is of type $A_k$ if its genotype
is $x^{k+1}$. This means that  there exist a local
coordinate sytem $(x_1,\ldots,x_m)$ centered at $p$ and a local
coordinate system $(y_1,\ldots,y_n)$ centered at $f(p)$ such that $f$
has the following normal form:
$$ \begin{array}{l}
y_i \circ f = x_i \hbox{ for } i \le n-1 ,\cr
y_n \circ f = x_n^{k+1} + \sum_{i=1}^{k-1} x_ix_n^{k-i} +x_{n+1}^2 +\cdots+x_{n+\lambda-1}^2-x_{n+\lambda}^2-\cdots-x_m^2.
\cr 
\end{array} $$
Note that $x$ belongs to $A_k^+(f)$ (resp. $A_k^-(f)$) if and only if $m-n-\lambda+1$ is even (resp. odd). We should remark also that if $k$ is odd then $x \in A_k^+(f)$ (resp. $A_k^-(f)$) if and only if
$\chi_c(f^{-1}(y') \cap \overline{B_\varepsilon(x)})=\chi(f^{-1}(y') \cap \overline{B_\varepsilon(x)})=0$ (resp. $2$) where $y'$ is a regular value of $f$ close to $f(x)$. It is well known that for $k\ge 1$, the $A_k(f)$'s are smooth manifolds of dimension $n-k$, that the $\overline{A_k(f)}$'s are smooth manifolds with boundary and that:
$$\overline{A_k(f)}= \cup_{i \ge k} A_i(f) , \  \ \partial{\overline{A_k(f)}}=\cup_{i>k} A_i(f).$$
We will describe more precisely the structure of the $A_k^{\pm}(f)$'s. 
\begin{prop}\label{RProp6A}
If $k$ is odd then $\overline{A_k^+(f)}$ and $\overline{A_k^-(f)}$ are compact manifolds with boundary of dimension $n-k$.
Furthermore $\partial \overline{A_k^+(f)}=\partial \overline{A_k^-(f)}= \overline{A_{k+1}(f)}$.
\end{prop}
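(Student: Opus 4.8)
The plan is to combine the local normal form of a Morin $A_k$-point with the quoted set-theoretic description $\overline{A_k(f)}=\bigcup_{i\ge k}A_i(f)$, $\overline{A_{k+1}(f)}=\bigcup_{i>k}A_i(f)$ of the strata. I first note that, for $k$ odd, the decomposition $A_k(f)=A_k^+(f)\sqcup A_k^-(f)$ is by open and closed subsets of $A_k(f)$. Indeed, by the remark preceding the statement, $x\in A_k(f)$ lies in $A_k^+(f)$ (resp.\ $A_k^-(f)$) exactly when $m-n-\lambda+1$ is even (resp.\ odd), $\lambda-1$ being the number of positive squares in the normal form at $x$; along a connected component of $A_k(f)$ the index of the nondegenerate transverse quadratic part in the normal form is locally constant (equivalently, this follows from Thom--Mather triviality of $f$ over a tubular neighbourhood of the component). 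Hence each $A_k^\pm(f)$ is a union of connected components of $A_k(f)$, in particular a smooth submanifold of $M$ of dimension $n-k$, and, $M$ being compact, $\overline{A_k^\pm(f)}$ is compact.

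Since $A_k^+(f)$ is open and closed in $A_k(f)=\overline{A_k(f)}\setminus\overline{A_{k+1}(f)}$, which is open in $\overline{A_k(f)}$, one gets $\overline{A_k^+(f)}\cap A_k(f)=A_k^+(f)$ and $\overline{A_k^+(f)}\setminus A_k^+(f)\subseteq\overline{A_k(f)}\setminus A_k(f)=\overline{A_{k+1}(f)}$, and similarly for $A_k^-(f)$; so $\overline{A_k^\pm(f)}\subseteq A_k^\pm(f)\cup\overline{A_{k+1}(f)}$. It remains to prove the reverse inclusion and to describe $\overline{A_k^\pm(f)}$ near $\overline{A_{k+1}(f)}$, and since $\overline{A_{k+1}(f)}$ is the closure of the manifold $A_{k+1}(f)$, it suffices to work in a neighbourhood $U$ of a point $q\in A_{k+1}(f)$.

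For this local step I would use the Morin normal form of $f$ at $q$: discarding the $n-1-k$ spectator coordinates and the fixed nondegenerate quadratic $Q$, one is looking at the deformation $\phi_{\mathbf{a}}(t)=t^{k+2}+\sum_{i=1}^{k}a_i\,t^{k+1-i}$ of the genotype $t^{k+2}$ of $A_{k+1}$, and $A_k(f)\cap U$ consists of the parameters $\mathbf{a}$ for which $\phi_{\mathbf{a}}$ has an $A_k$-critical point at some $t_0$. Solving $\phi_{\mathbf{a}}^{(1)}(t_0)=\dots=\phi_{\mathbf{a}}^{(k)}(t_0)=0$ successively for $a_1,\dots,a_k$ exhibits this locus as a smooth (polynomial) arc $t_0\mapsto(t_0,\mathbf{a}(t_0))$ through $q$, with $q$ corresponding to $t_0=0$; thus $A_k(f)\cap U$ is this arc minus $q$, i.e.\ the two half-arcs $\{t_0>0\}$ and $\{t_0<0\}$ times $\mathbb{R}^{n-1-k}$. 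The key point is that these two half-arcs carry opposite signs: near the $A_k$-critical point one has $\phi_{\mathbf{a}(t_0)}(t)=\mathrm{const}+(k+2)\,t_0\,(t-t_0)^{k+1}+\dots$, and because $k+1$ is even, reducing this to the genotype $x^{k+1}$ (with the standard $+$ sign) costs a target reflection $y_n\mapsto -y_n$ precisely on the branch $t_0<0$; that reflection replaces the transverse quadratic $Q$ by $-Q$, whose index differs in parity from that of $Q$ exactly because $m-n$ is odd. Hence $\{t_0>0\}$ and $\{t_0<0\}$ lie in the two different sets $A_k^\pm(f)$, both accumulate onto $q$, and $\overline{A_k^+(f)}\cap U$, $\overline{A_k^-(f)}\cap U$ are the two closed half-arcs $\{t_0\ge 0\}\times\mathbb{R}^{n-1-k}$, $\{t_0\le 0\}\times\mathbb{R}^{n-1-k}$, each a smooth manifold with boundary $\{t_0=0\}\times\mathbb{R}^{n-1-k}=\overline{A_{k+1}(f)}\cap U$.

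Combining these observations, $A_{k+1}(f)\subseteq\overline{A_k^+(f)}\cap\overline{A_k^-(f)}$, hence $\overline{A_{k+1}(f)}\subseteq\overline{A_k^\pm(f)}$, so $\overline{A_k^\pm(f)}=A_k^\pm(f)\sqcup\overline{A_{k+1}(f)}$; this is a compact manifold with boundary of dimension $n-k$ (smooth at interior points, with the half-space model above along $A_{k+1}(f)$, the deeper strata $A_{k+2}(f),\dots$ being absorbed upon passing to closures, or handled by an induction that uses the manifold-with-boundary structure of $\overline{A_{k+1}(f)}$), and $\partial\overline{A_k^+(f)}=\partial\overline{A_k^-(f)}=\overline{A_{k+1}(f)}$. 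I expect the main obstacle to be precisely this local analysis near $A_{k+1}(f)$: one must check carefully, from the normal form, that $A_k(f)$ there is exactly two arc-bundles of opposite sign both limiting onto $A_{k+1}(f)$, and that their closures are smooth half-spaces sharing the common boundary $\overline{A_{k+1}(f)}$; the remaining steps are point-set topology together with the quoted description of $\overline{A_k(f)}$.
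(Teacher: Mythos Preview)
Your overall strategy matches the paper's: in a Morin chart one shows that the two local pieces of $A_k(f)$ on either side of $\overline{A_{k+1}(f)}$ carry opposite $\pm$-labels, so that each of $\overline{A_k^\pm(f)}$ is one ``half'' of the closed manifold $\overline{A_k(f)}$ with common boundary $\overline{A_{k+1}(f)}$. There are two differences worth recording.

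\textbf{Sign computation.} The paper determines the label of $q\in A_k(f)$ by computing $\varphi(q)=\chi\bigl(f^{-1}(y')\cap\overline{B_\varepsilon(q)}\bigr)$ via Khimshiashvili's formula for $\deg_0\nabla g'$, obtaining that $q\in A_k^+(f)$ or $A_k^-(f)$ according to the sign of $\dfrac{\partial^{k+1}g}{\partial x_n^{k+1}}(q)$ (coupled with the parity of $\lambda$). Your route is more elementary and rather neat: since $k+1$ is even, a positive rescaling of the variable cannot change the sign of the leading coefficient $(k+2)\,t_0$, so when $t_0<0$ one is forced to apply the target reflection $y_n\mapsto-y_n$, which turns $Q$ into $-Q$; and because the number $m-n$ of quadratic variables is odd, this flips the parity of $m-n-\lambda+1$. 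Both arguments yield the same criterion, yours without invoking the local degree.

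\textbf{Deeper strata.} The paper carries out the computation near an arbitrary $p\in A_l(f)$ for \emph{every} $l>k$: in those coordinates $\overline{A_{k+1}(f)}$ is exactly the zero set of $\dfrac{\partial^{k+1}g}{\partial x_n^{k+1}}$ inside $\overline{A_k(f)}$, so the half-space model for $\overline{A_k^\pm(f)}$ is obtained uniformly at every boundary point. You instead restrict to $q\in A_{k+1}(f)$ and relegate the points of $A_{k+2}(f),A_{k+3}(f),\dots$ to the parenthetical ``absorbed upon passing to closures, or handled by an induction.'' That is a genuine (if small) gap: knowing the half-space model on the dense open part $A_{k+1}(f)$ of the boundary does not by itself give it on all of $\overline{A_{k+1}(f)}$. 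It can be repaired---near $p\in A_l(f)$ with $l>k+1$ the complement $\overline{A_k(f)}\setminus\overline{A_{k+1}(f)}$ still has two local components, each open in $A_k(f)$ and hence entirely in one of $A_k^\pm(f)$, and a nearby point of $A_{k+1}(f)$ (density) together with your analysis there forces the two components to carry opposite labels---but the paper's uniform treatment across all $l>k$ sidesteps this extra step.
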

\begin{proof} 
Let $p$ be a point in $A_k(f)$, $k$ odd. There exist local coordinates around $p$ and $f(p)$ such that $f$
has the form:
$$ \begin{array}{l}
y_i \circ f = x_i \hbox{ for } i \le n-1 ,\cr
y_n \circ f = x_n^{k+1} + \sum_{i=1}^{k-1} x_ix_n^{k-i} +x_{n+1}^2 +\cdots+x_{n+\lambda-1}^2-x_{n+\lambda}^2-\cdots-x_m^2. 
\end{array}$$
Let us write $g=y_n \circ f$. Around $p$, $A_k(f)$ is defined by $\frac{\partial g}{\partial x_n}=\cdots=\frac{\partial^k
g}{\partial x_n^k}=0$ and $x_{n+1}=\cdots=x_m=0$. 
It is easy to see that this is equivalent to $x_1=\cdots=x_{k-1}=0$ and
$x_n=\cdots=x_m=0$. This proves that $A_k(f)$ is a manifold  of dimension $n-k$. Let $q=(q_1,\ldots,q_m) \in A_k(f)$ be a point
close to $p$. We have $q_1=\ldots=q_{k-1}=0$ and $q_n=\ldots=q_m=0$. For $i\in \{k,\ldots,n-1\}$, let us put
$z_i=x_i-q_i$ and $w_i=y_i-q_i$. For $i \notin \{k,\ldots,n-1\}$, let us put $z_i=x_i$ and $w_i=y_i$. Then
$(z_1,\ldots,z_m)$ and $(w_1,\ldots,w_n)$ are local coordinate systems centered at $q$ and $f(q)$. In these systems, $f$
has the form:
$$\begin{array}{l}
w_i \circ f = z_i \hbox{ for } i \le n-1, \cr
w_n \circ f = z_n^{k+1} + \sum_{i=1}^{k-1} z_iz_n^{k-i} +z_{n+1}^2 +\cdots+z_{n+\lambda-1}^2-z_{n+\lambda}^2-\cdots-z_m^2.
\cr 
\end{array} $$
We conclude that $q $ belongs to $A_k^+(f)$ (resp. $A_k^-(f)$) if and only if $p$ belongs to $A_k^+(f)$ (resp.
$A_k^-(f)$). This proves that the sets  $A_k^+(f)$ and $A_k^-(f)$ are open subsets of $A_k(f)$, hence manifolds of dimension $n-k$. 

We know that $\overline{A_k(f)}=\cup_{l\ge k} A_l(f)$. Let $l>k$ and let $p \in A_l(f)$. There are local coordinates
systems around $p$ and $f(p)$ such that $f$ has the form:
$$\begin{array}{l}
y_i \circ f = x_i \hbox{ for } i \le n-1, \cr
y_n \circ f = x_n^{l+1} + \sum_{i=1}^{l-1} x_ix_n^{l-i} +x_{n+1}^2 +\cdots+x_{n+\lambda-1}^2-x_{n+\lambda}^2-\cdots-x_m^2.
\cr 
\end{array}$$
Let us denote by $g$ the function $y_n \circ f$. We have:
$$A_k(f)= \left\{ \frac{\partial g}{\partial x_n}=\cdots=\frac{\partial^k g}{\partial x_n^k}=0,x_{n+1}=\cdots=x_m=0,\frac{\partial^{k+1} g}{\partial x_n^{k+1}}\not=0
\right\},$$
and
$$\overline{A_{k+1}(f)}= \left\{ \frac{\partial g}{\partial x_n}=\cdots=\frac{\partial^{k+1} g}{\partial x_n^{k+1}}=0,x_{n+1}=\cdots=x_m=0
\right\}.$$
Let $q=(q_1,\ldots,q_n,0,\ldots,0)$ be a point in $A_k(f)$ close to $p$. Let us find when $q \in A_k^+(f)$ or $q\in
A_k^-(f)$. For this we have to compute $\varphi(q)=\chi(f^{-1}(y') \cap \overline{B_\varepsilon(q)})$ where $y'$ is a regular value of $f$ close to $f(q)$. 
Since it does not depend on the choice of the regular value because $m-n$ is odd, let us compute $\chi(f^{-1}(\tilde{y}) \cap \overline{B_\varepsilon(q)})$ where $\tilde{y}=(q_1,\ldots,q_{n-1},
q_n+\epsilon)$ and $\epsilon$ is a small real number.
So we have to look for the
zeros lying close to $q$ of the following system:
$$\left\{ \begin{array}{l}
y_i \circ f (x)= q_i \hbox{ for } i \le n-1 \cr
g(x)=g(q)+ \epsilon .\cr
\end{array} \right.$$
This system is equivalent to:
$$\left\{ \begin{array}{l}
x_i= q_i \hbox{ for } i \le n-1 \cr
g(q_1,\ldots,q_{n-1},q_n+x_n',x_{n+1},\ldots,x_m)=g(q)+ \epsilon .\cr
\end{array} \right.$$
But we have:
\begin{align*}
&g(q_1,\ldots,q_{n-1},q_n+x_n',x_{n+1},\ldots,x_m)\\
=&g(q_1,\ldots,q_{n-1},q_n+x_n',0,\ldots,0) 
+x_{n+1}^2+\cdots+x_{n+\lambda-1}^2-x_{n+\lambda}^2-\cdots-x_m^2\\
=&g(q)+\sum_{i\ge k+1} \frac{1}{i!}\frac{\partial^i g}{\partial
x_n^i}(q){x_n'}^i+x_{n+1}^2+\cdots
+x_{n+\lambda-1}^2-x_{n+\lambda}^2-\cdots-x_m^2 
\\
=& 
g(q)+g'(x'_n,x_{n+1},\ldots,x_m).
\end{align*}
Hence by Khimshiashvili's formula \cite{Khimshiashvili}, we have :
 $\varphi (q)=1-\deg_0 \nabla g'$,
where $\deg_ 0 \nabla g'$ is the topological degree of the map
 $\frac{\nabla g'}{\Vert \nabla g' \Vert} : S_\varepsilon^{m-n} \to
 S^{m-n}$.
Two cases are possible. If $\lambda$ is even then: 
$$
q \in A_k^+(f) \Leftrightarrow \frac{\partial^{k+1}g}{\partial
 x_n^{k+1}}(q) >0 \hbox{ and }  
q \in A_k^-(f) \Leftrightarrow \frac{\partial^{k+1}g}{\partial x_n^{k+1}}(q) <0.$$
If $\lambda$ is odd then: 
$$q \in A_k^+(f) \Leftrightarrow \frac{\partial^{k+1}g}{\partial x_n^{k+1}}(q) <0 \hbox{ and }  
q \in A_k^-(f) \Leftrightarrow \frac{\partial^{k+1}g}{\partial x_n^{k+1}}(q) >0.$$
Finally we see that the sets $A_k^+(f)$ and $A_k^-(f)$ are in
 correspondence with the sets 
$A_k(f) \cap 
\{\frac{\partial^{k+1}g}{\partial x_n^{k+1}}>0\}$ and $A_k(f) \cap 
\{\frac{\partial^{k+1}g}{\partial x_n^{k+1}}<0\}$, 
which enables us to conclude. 
\end{proof}

We can state our main theorem which is a slight improvement of a result
of T.~Fukuda \cite{Fukuda1988} for $N=\mathbb{R}^n$ and O.~Saeki
\cite{Saeki} for a general $N$. 

\begin{thm}\label{RThm6A}
Let $f : M^m \to N^n$ be a Morin mapping. Assume that $M$ is compact, $N$ is connected and $m-n$ is odd. Then we have:
$$
\chi(M)=\sum_{k: \odd}
\Bigl[\chi(\overline{A_k^+(f)})-\chi(\overline{A_k^-(f)})\Bigr]. 
$$
\end{thm}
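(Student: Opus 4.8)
## Proof Proposal

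The plan is to apply Theorem \ref{RThm1} to the Morin map $f : M^m \to N^n$ and then convert the resulting identity in Euler characteristics with compact support into an identity in ordinary Euler characteristics, using the compactness of $M$ and the structure results for the strata $A_k^\pm(f)$. Since $M$ is compact and $f$ is proper, $f$ is automatically locally trivial at infinity (one may take $K = M$ in Definition \ref{Def:localTriv}), and a Morin map has only the singularity types $A_k$, so the finiteness hypotheses of Theorem \ref{RThm1} are satisfied. Moreover every $A_k$ singularity is a versal unfolding of the function-germ $x \mapsto x^{k+1}$, so formula \eqref{F2} applies. By the computations of Section \ref{Euler characteristics of local generic fibers} (see the proof of Corollary \ref{RMorin1}), $s_{A_k} = 0$ when $k$ is even and $s_{A_k} = 1$ when $k$ is odd, so \eqref{F2} reduces to
$$
\chi_c(M) - \chi_f\, \chi_c(N) = \sum_{k:\,\odd} \bigl[\chi_c(A_k^+(f)) - \chi_c(A_k^-(f))\bigr].
$$

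Next I would pin down $\chi_f$. Here $m - n$ is odd, so a regular fiber $f^{-1}(y)$ is a compact manifold of odd dimension $m-n$, hence has Euler characteristic zero; thus $\chi_f = 0$ and the right-hand side above equals $\chi_c(M) = \chi(M)$, the last equality because $M$ is compact. It remains to replace each $\chi_c(A_k^\pm(f))$ by $\chi(\overline{A_k^\pm(f)})$. This is where Proposition \ref{RProp6A} enters: for $k$ odd, $\overline{A_k^\pm(f)}$ is a compact manifold with boundary of dimension $n-k$, with $\partial \overline{A_k^+(f)} = \partial \overline{A_k^-(f)} = \overline{A_{k+1}(f)}$, and $A_k^\pm(f)$ is its interior. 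For a compact manifold with boundary $W$ one has $\chi_c(\mathrm{int}\,W) = \chi(W) - \chi(\partial W)$, so
$$
\chi_c(A_k^+(f)) - \chi_c(A_k^-(f)) = \bigl[\chi(\overline{A_k^+(f)}) - \chi(\overline{A_{k+1}(f)})\bigr] - \bigl[\chi(\overline{A_k^-(f)}) - \chi(\overline{A_{k+1}(f)})\bigr] = \chi(\overline{A_k^+(f)}) - \chi(\overline{A_k^-(f)}),
$$
the boundary contributions cancelling in each odd-$k$ term. Summing over odd $k$ gives the stated formula.

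The step I expect to require the most care is the bookkeeping for $k$ even: formula \eqref{F2} a priori also involves the sets $A_k^\pm(f)$ for even $k$, but these are weighted by $s_{A_k} = 0$, so they drop out of the algebraic identity even though they contribute to $\chi_c(M)$ through the decomposition $M = (M \setminus \Sigma(f)) \sqcup \bigsqcup_k A_k(f)$; one should make sure the passage from \eqref{F11} to \eqref{F2} in the proof of Theorem \ref{RThm1} has already absorbed this correctly, which it has via the relation $1 - c_{\sigma^-} = -(1-c_{\sigma^+}) = -s_\sigma$. A second delicate point is verifying that $A_k^\pm(f)$ is genuinely the interior of $\overline{A_k^\pm(f)}$ and that no stratum $A_{k+1}$ splits into $\pm$ pieces that would contribute asymmetrically to the boundary — but Proposition \ref{RProp6A} states precisely that both closures have the \emph{same} boundary $\overline{A_{k+1}(f)}$, so the cancellation is exact. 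With these two points checked, the proof is a direct concatenation of Theorem \ref{RThm1}, the vanishing $\chi_f = 0$, and Proposition \ref{RProp6A}.
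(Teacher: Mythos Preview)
Your proposal is correct and follows essentially the same route as the paper: apply Corollary~\ref{RMorin1} (which you re-derive from Theorem~\ref{RThm1}), use $\chi_f=0$ since the regular fiber is a closed odd-dimensional manifold, and then convert $\chi_c(A_k^\pm(f))$ to $\chi(\overline{A_k^\pm(f)})$ via Proposition~\ref{RProp6A} and the cancellation of the common boundary $\overline{A_{k+1}(f)}$. The paper phrases the last step through additivity of $\chi_c$ on $\overline{A_k^\pm(f)}=A_k^\pm(f)\sqcup\overline{A_{k+1}(f)}$, which is equivalent to your $\chi_c(\mathrm{int}\,W)=\chi(W)-\chi(\partial W)$ computation.
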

\begin{proof}
Applying Corollary \ref{RMorin1}, we get:
$$
\chi_c(M)-\chi_f\, \chi_c(N)
=\medsum_{k:\odd}
\bigl[\chi_c( A_k^+ (f))-\chi_c( A_k^-(f))\bigr],
$$
where $\chi_f$ is the Euler characteristic of a regular fiber of $f$. In this situation, $\chi_f=0$ because the regular fiber of $f$ is a compact odd-dimensional manifold. Then we remark  that $\chi_c(M)=\chi(M)$ because $M$ is compact. Moreover by the additivity of the Euler-Poincar\'e with
compact support, we have:
\begin{align*}
\chi(\overline{A_k^+(f)})
=&\chi_c(\overline{A_k^+(f)})
=\chi_c(A_k^+(f))+\chi_c(\partial(\overline{A_k^+(f)}))
=\chi_c(A_k^+(f))+\chi_c(\overline{A_{k+1}(f)}),\\ 
\chi(\overline{A_k^-(f)})
=&\chi_c(\overline{A_k^-(f)})
=\chi_c(A_k^-(f))+\chi_c(\partial(\overline{A_k^-(f)}))
=\chi_c(A_k^-(f))+\chi_c(\overline{A_{k+1}(f)}).
\end{align*} 
This implies that 
$\chi(\overline{A_k^+(f)})-\chi(\overline{A_k^-(f)})
=\chi_c(A_k^+(f))-\chi_c(A_k^{-}(f))$. 
\end{proof}
We end this subsection with two remarks: 

\begin{enumerate}
\item If $m$ is odd then $n$ is even and $\chi(M)=0$. If $k$ is odd, the dimension of $\overline{A_k^+(f)}$ and
$\overline{A_k^+(f)}$ is odd. Furthermore, we have:
$$\chi(\overline{A_k^+(f)})=\frac{1}{2}\chi(\partial \overline{A_k^+(f)})=\frac{1}{2}\chi(\overline{A_{k+1}(f)})=
\frac{1}{2}\chi(\partial \overline{A_k^-(f)})=\chi(\overline{A_k^-(f)}),$$
and $\chi(\overline{A_k^+(f)})-\chi(\overline{A_k^-(f)})=0$. In this case, our theorem is trivial.
\item If $m$ is even and $n=1$, then we can apply our theorem. 
In this situation, there is only a finite number of singular
points, which are the elements of $A_1^+(f)$ and of $A_1^-(f)$. Theorem
      \ref{RThm6A} gives 
that $\chi(M)= \# A_1^+(f)-\# A_1^-(f)$. We recover the well-known Morse equalities.
\end{enumerate} 

\subsection{Morin  maps from $M^n$ to $N^n$}
Let $f : M^n \to N^n$ be a Morin mapping from a compact oriented manifold $M$ of dimension $n$ to a connected manifold $N$ of the same dimension.
For any $p \in M$, let $\varphi (p)$ be the local topological degree of
the map-germ $f : (M,p) \to (N,f(p))$. Recall that $\varphi(p)=0$ if $p
\in A_k(f)$ and $k$ odd and that $\vert \varphi(p) \vert=1$ if $p \in
A_k(f)$ and $k$ even. Hence, if $k$ is even, $A_k(f)$ splits into two
subsets $A_k^+(f)$ and $A_k^-(f)$ where $A_k^+(f)$ (resp. $A_k^-(f)$)
consists of the points $p$ such that $\varphi(p)=1$
(resp. $\varphi(p)=-1$). It is well known that the $A_k(f)$'s are smooth
manifolds of dimension $n-k$, that the $\overline{A_k(f)}$'s are smooth
manifolds with boundary and that:
$$
\overline{A_k(f)}= \cup_{i \ge k} A_i(f) , \  \
\partial{\overline{A_k(f)}}=\cup_{i>k} A_i(f).
$$
Remark that $A_0(f)$ is the set of regular points of $f$. 
Let  us  describe more precisely the structure of the sets $A_k^{\pm}(f)$. 
\begin{prop}
If $k$ is even, then $\overline{A_k^+(f)}$ and $\overline{A_k^-(f)}$ are manifolds with boundary of dimension $n-k$ and 
$\partial \overline{A_k^+(f)}=\partial \overline{A_k^-(f)}= \overline{A_{k+1}(f)}$.
\end{prop}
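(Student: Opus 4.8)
The plan is to follow, almost verbatim, the argument used for Proposition \ref{RProp6A}, with the local topological degree $\varphi(q)=\deg\bigl(f:(M,q)\to(N,f(q))\bigr)$ playing the role that the fibre Euler characteristic $\chi_c(f^{-1}(y')\cap\overline{B_\varepsilon(q)})$ played there. (This degree is defined near $\Sigma(f)$ because, for a Morin germ, the origin is an isolated point of its zero fibre.) First I would show that $A_k^+(f)$ and $A_k^-(f)$ are open in $A_k(f)$, hence smooth manifolds of dimension $n-k$: exactly as in the first half of the proof of Proposition \ref{RProp6A}, for $q\in A_k(f)$ near $p\in A_k(f)$ one re-centres the Morin normal form at $q$ using translations only, and since translations preserve orientations the local degree at $q$ equals the one at $p$; as $A_k(f)$ is locally closed, this also gives $\overline{A_k^+(f)}\cap A_k(f)=A_k^+(f)$ and likewise for $A_k^-(f)$.

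The core of the argument is the behaviour near $\overline{A_{k+1}(f)}=\bigcup_{i\ge k+1}A_i(f)$. Take $p\in A_l(f)$ with $l>k$; as in Proposition \ref{RProp6A} choose local coordinates in which $y_i\circ f=x_i$ for $i\le n-1$ and
$$
g:=y_n\circ f=x_n^{l+1}+\sum_{i=1}^{l-1}x_ix_n^{l-i},
$$
so that near $p$ one has $A_k(f)=\{\partial g/\partial x_n=\dots=\partial^k g/\partial x_n^k=0,\ \partial^{k+1}g/\partial x_n^{k+1}\ne0\}$ and $\overline{A_{k+1}(f)}=\{\partial g/\partial x_n=\dots=\partial^{k+1}g/\partial x_n^{k+1}=0\}$. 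I would then check, directly from this explicit $g$, that the jet map $q\mapsto\bigl(\partial g/\partial x_n(q),\dots,\partial^{k+1}g/\partial x_n^{k+1}(q)\bigr)$ is a submersion at $p$ (one varies the coefficients $x_1,\dots,x_{l-1}$, together with $x_n$ when $k+1=l$). It follows that $Z:=\{\partial g/\partial x_n=\dots=\partial^k g/\partial x_n^k=0\}$ is, near $p$, a smooth $(n-k)$-manifold — necessarily equal to $\overline{A_k(f)}$ there — that $\overline{A_{k+1}(f)}=Z\cap\{h=0\}$ is a smooth hypersurface of $Z$, where $h:=\partial^{k+1}g/\partial x_n^{k+1}$ has nonvanishing differential along it, and that $A_k(f)=Z\cap\{h\ne0\}$.

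Next I would pin down which side of $\overline{A_{k+1}(f)}$ in $Z$ is $A_k^+(f)$ and which is $A_k^-(f)$, by the computation of Proposition \ref{RProp6A} adapted to $m=n$: for $q\in A_k(f)$ close to $p$, test $\varphi(q)$ on the regular value $\widetilde y=(q_1,\dots,q_{n-1},g(q)+\eta)$. The preimages near $q$ reduce to the one-variable equation $h_q(x_n)=g(q)+\eta$, $h_q(x_n):=g(q_1,\dots,q_{n-1},x_n)$; since $h_q$ vanishes to order $k+1$ at $x_n=q_n$ with $h_q^{(k+1)}(q_n)=h(q)\ne0$ and $k+1$ odd, $h_q$ is strictly monotone near $q_n$, so there is exactly one such preimage, at which $\det Df=\partial g/\partial x_n$ has the sign of $h(q)$ because $k$ is even. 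Hence $\varphi(q)=\sign h(q)$, so near $p$ one has $A_k^+(f)=Z\cap\{h>0\}$ and $A_k^-(f)=Z\cap\{h<0\}$.

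To conclude, since $0$ is a regular value of $h|_Z$, the local closures are $Z\cap\{h\ge0\}$ and $Z\cap\{h\le0\}$: smooth $(n-k)$-manifolds with boundary, both having boundary $Z\cap\{h=0\}=\overline{A_{k+1}(f)}$, and both accumulating onto all of that hypersurface. Combining this with the first paragraph (near points of $A_k(f)$ the sets $\overline{A_k^\pm(f)}$ are locally boundaryless $(n-k)$-manifolds, and contain no point of $\overline{A_{k+1}(f)}$) yields $\overline{A_k^\pm(f)}=A_k^\pm(f)\cup\overline{A_{k+1}(f)}$, that $\overline{A_k^+(f)}$ and $\overline{A_k^-(f)}$ are smooth manifolds with boundary of dimension $n-k$, and that $\partial\overline{A_k^+(f)}=\partial\overline{A_k^-(f)}=\overline{A_{k+1}(f)}$. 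I expect the main obstacle to be the transversality input of the second paragraph — the submersivity of the jet map, i.e. the well-known nested-smooth-manifold structure of the Morin strata — since everything else is orientation and sign bookkeeping; and I would stress that the hypothesis ``$k$ even'' is used exactly where ``$k$ odd'' would instead force $\varphi\equiv0$, so that the splitting $A_k(f)=A_k^+(f)\sqcup A_k^-(f)$ would not even be defined.
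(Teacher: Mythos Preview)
Your proposal is correct and follows essentially the same route as the paper's proof: both arguments reduce to showing that, near a point $p\in A_l(f)$ with $l>k$, the local degree $\varphi(q)$ of a nearby $q\in A_k(f)$ equals $\sign\,\partial^{k+1}g/\partial x_n^{k+1}(q)$, so that $A_k^+(f)$ and $A_k^-(f)$ are the two half-spaces cut out of $\overline{A_k(f)}$ by the regular hypersurface $\overline{A_{k+1}(f)}$. The only cosmetic difference is that the paper writes the normal form with $\pm$ signs to keep the source and target coordinates positively oriented (so that $\det Df$ genuinely computes the degree), whereas you suppress these; this does not affect the conclusion, since either sign convention still identifies $A_k^\pm(f)$ with the two sides of $\{h=0\}$.
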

\begin{proof} 
Let $p$ be  a point in $A_k(f)$, $k$ even. In local coordinates, $f$ is given by:
$$
\begin{array}{l}
y_i \circ f = x_i \hbox{ for } i \le n-1 ,\cr
y_n \circ f = x_n^{k+1} + \sum_{i=1}^{k-1} x_ix_n^{k-i}. \cr 
\end{array}
$$
If we suppose that $(x_1,\ldots,x_n)$ and $(y_1,\ldots,y_n)$ are coordinates in direct basis,
then $f$ has two possible forms:
$$
\begin{cases}
y_i \circ f = &x_i \quad\hbox{ for } i \le n-1 ,\cr
y_n \circ f = &x_n^{k+1} + \sum_{i=1}^{k-1} x_ix_n^{k-i}, \cr 
\end{cases}
\quad\textrm{or}\quad
\begin{cases}
y_i \circ f = &x_i \quad\hbox{ for } i \le n-1 ,\cr
y_n \circ f = &-x_n^{k+1} + \sum_{i=1}^{k-1}(-1)^{k-i} x_ix_n^{k-i}. \cr 
\end{cases}
$$
In the first case, $\varphi(p)=1$ and in the second case $\varphi(p)=-1$. 

We can prove the fact that $A_k^+(f)$ and $A_k^-(f)$ are manifolds of dimension $n-k$  with the same method as
in Proposition \ref{RProp6A}. 
Now let $l>k$ and let $p \in A_l(f)$. Locally $f$ is given by:
$$\begin{array}{l}
y_i \circ f = x_i \hbox{ for } i \le n-1 ,\cr
y_n \circ f = \pm x_n^{l+1} + \sum_{i=1}^{l-1}\pm x_ix_n^{l-i}. \cr 
\end{array} $$
Let us denote by $g$ the function $y_n \circ f$. We have:
$$A_k(f)= \left\{ \frac{\partial g}{\partial x_n}=\cdots=\frac{\partial^k g}{\partial x_n^k}=0,\frac{\partial^{k+1} g}{\partial x_n^{k+1}} \not= 0
\right\},$$
and:
$$\overline{A_{k+1}(f)}= \left\{ \frac{\partial g}{\partial x_n}=\cdots=\frac{\partial^{k+1} g}{\partial x_n^{k+1}}=0
\right\}.$$
Let $q=(q_1,\ldots,q_n)$ be a point in $A_k(f)$ close to $p$. Let us find when $q \in A_k^+(f)$ or $q\in
A_k^-(f)$. For this we have to compute $\varphi(q)$. Let $\epsilon$ be a small real number and let us look for the
zeros lying close to $q$ of the following system:
$$\left\{ \begin{array}{l}
y_i \circ f (x)= q_i \hbox{ for } i \le n-1, \cr
g(x)=g(q)+ \epsilon .\cr
\end{array} \right.$$
This system is equivalent to:
$$\left\{ \begin{array}{l}
x_i= q_i \hbox{ for } i \le n-1 ,\cr
g(q_1,\ldots,q_{n-1},q_n+x_n')=g(q)+ \epsilon . \cr
\end{array} \right.$$
But: 
$$g(q_1,\ldots,q_{n-1},q_n+x_n')=g(q)+\sum_{i\ge k+1}\frac{\partial^i g}{\partial
x_n^i}(q){x_n'}^i.$$
Then we see  that $\varphi(q)= \hbox{sign} \frac{\partial^{k+1}g}{\partial x_n^{k+1}}(q)$. We conclude as in Proposition \ref{RProp6A}.
\end{proof}
\begin{thm}
Let $f : M^n \to N^n$ be a Morin mapping. Assume that $M$ is compact and
 oriented and that $N$ is connected and oriented. We have:
$$
\sum_{k : \e} \Bigl[
\chi(\overline{A_k^+(f)})-\chi(\overline{A_k^-(f)})\Bigr]
=(\deg f)\chi(N).
$$
\end{thm}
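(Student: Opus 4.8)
The plan is to deduce this from Corollary \ref{RMorin4}, exactly as Theorem \ref{RThm6A} was deduced from Corollary \ref{RMorin1}. First I would check that $f$ meets the hypotheses of Theorem \ref{RThm1D}, hence of Corollary \ref{RMorin4}. Since $M$ is compact, $f$ is proper, so it is finite-to-one and it is locally trivial at infinity in a trivial way: take $K=M$, so that $(M\setminus K)\cap f^{-1}(D)=\emptyset$ for every open $D$. Being a Morin map, $f$ has only singularities of type $A_k$, and since $A_k(f)$ has dimension $n-k$ it can carry only the types $A_0,\dots,A_n$; in particular $f$ has finitely many singularity types and ``at worst $A_n$ singularities''. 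Together with the standing hypotheses that $M$ and $N$ are oriented and $N$ connected, Corollary \ref{RMorin4} then yields
$$
\medsum_{k:\e}\bigl[\chi_c(A_k^+(f))-\chi_c(A_k^-(f))\bigr]=(\deg f)\,\chi_c(N).
$$

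Next I would rewrite the left-hand side in terms of ordinary Euler characteristics of the closures. By the Proposition preceding the theorem, for $k$ even the sets $\overline{A_k^+(f)}$ and $\overline{A_k^-(f)}$ are manifolds with boundary, with $\partial\overline{A_k^+(f)}=\partial\overline{A_k^-(f)}=\overline{A_{k+1}(f)}$; since $M$ is compact they are compact, so on them $\chi_c=\chi$. Additivity of the Euler characteristic with compact support then gives, for every even $k$ (including $k=0$, where $A_0^\pm(f)$ are the orientation-preserving and orientation-reversing regular loci),
$$
\chi(\overline{A_k^\pm(f)})=\chi_c(A_k^\pm(f))+\chi_c(\overline{A_{k+1}(f)}),
$$
and subtracting the common term $\chi_c(\overline{A_{k+1}(f)})$ yields $\chi(\overline{A_k^+(f)})-\chi(\overline{A_k^-(f)})=\chi_c(A_k^+(f))-\chi_c(A_k^-(f))$.

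It remains to identify $\chi_c(N)$ with $\chi(N)$. I would argue by cases on $\deg f$. If $\deg f=0$, both sides of the asserted identity vanish and there is nothing more to prove. If $\deg f\ne 0$, then every regular value of $f$ has nonempty preimage, so $f(M)$ is dense in $N$; it is also closed since $f$ is proper, and $N$ is connected, so $N=f(M)$ is compact and $\chi_c(N)=\chi(N)$. Combining the three steps,
$$
\medsum_{k:\e}\bigl[\chi(\overline{A_k^+(f)})-\chi(\overline{A_k^-(f)})\bigr]=\medsum_{k:\e}\bigl[\chi_c(A_k^+(f))-\chi_c(A_k^-(f))\bigr]=(\deg f)\,\chi_c(N)=(\deg f)\,\chi(N).
$$

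I do not expect a serious obstacle here: the substance is carried entirely by Corollary \ref{RMorin4} and the structure Proposition, and the rest is the same bookkeeping with $\chi_c$ versus $\chi$ that appears in the proof of Theorem \ref{RThm6A}. The only points that need a little care are the verification of the (trivial) local triviality at infinity so that Corollary \ref{RMorin4} genuinely applies, and the passage $\chi_c(N)=\chi(N)$ in the target, which I would handle through the $\deg f=0$ versus $\deg f\ne0$ dichotomy above rather than by assuming $N$ compact outright.
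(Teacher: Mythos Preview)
Your proposal is correct and follows essentially the same approach as the paper: apply Corollary \ref{RMorin4}, convert $\chi_c(A_k^\pm(f))$ to $\chi(\overline{A_k^\pm(f)})$ via the preceding Proposition and additivity (exactly as in Theorem \ref{RThm6A}), and handle $(\deg f)\chi_c(N)=(\deg f)\chi(N)$ by the compact/non-compact dichotomy on $N$. The paper phrases the last step as ``if $N$ is compact then $\chi_c(N)=\chi(N)$, and if $N$ is not compact then $\deg f=0$'', which is the contrapositive of your argument; your version additionally supplies the reason (properness forces $f(M)=N$ when $\deg f\neq 0$) and spells out why the hypotheses of Corollary \ref{RMorin4} hold, but the route is the same.
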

This is proved by I.~R.~Quine \cite{Quine} when $n=2$. 
It appeared in a preprint of I.~Nakai \cite{Nakai1996} for any $n$.
\begin{proof} 
By Corollary \ref{RMorin4}, we know that:
$$
\medsum_{k :\textrm{even}}
\bigl[\chi_c(A_k^+(f))-\chi_c(A_k^-(f))\bigr]
=(\deg f)\chi_c(N).
$$
If $N$ is compact then $\chi_c(N)=\chi(N)$ and if $N$ is not compact then $\deg f=0$. In both cases the equality $(\deg f) \chi_c (N)= (\deg f) \chi(N)$ is true. With the same arguments as in Theorem \ref{RThm6A}, it is easy to prove that $\chi(\overline{A_k^+(f)})-\chi(\overline{A_k^-(f)})=
\chi_c(A_k^+(f))-\chi_c(A_k^-(f))$.
\end{proof}

\begin{rem}
When $n$ is odd, $\overline{A_k^+(f)}$ and $\overline{A_k^-(f)}$
are odd-dimensional manifolds with the same boundary and so the left hand-side of the equality vanishes. But the right-hand
side is also zero because $\chi(N)=0$ if $N$ is compact and $\deg f=0$ if $N$ is not compact. Hence our theorem is trivial in
this case.
\end{rem}

\subsection{Local versions}
We give local versions of the global formulas of the previous subsections. 

We work first with map-germs $f :(\mathbb{R}^n,0) \rightarrow
(\mathbb{R}^p,0)$, $n>p$, which are generic in the sense of
Theorem $1'$ in \cite{Fukuda1985}. There are two cases: 

Case I) If the origin $0$ is not isolated in $f^{-1}(0)$, i.e $0 \in
\overline{f^{-1}(0) \setminus \{0\}}$, then
there exist a positive number $\varepsilon_0$ and a strictly increasing
function 
$\delta : [0,\varepsilon_0] \rightarrow[0,+\infty)$ 
with $\delta(0)=0$ such that for every $\varepsilon$ and
$\delta$ with $0< \varepsilon \le \varepsilon_0$ and
$0<\delta< \delta(\varepsilon)$ the following properties hold:
\begin{enumerate}
\item $f^{-1}(0) \cap S_\varepsilon^{n-1}$ is an $(n-p-1)$-dimensional
      manifold and it is diffeomorphic to $f^{-1}(0) \cap
S_{\varepsilon_0}^{n-1}$.
\item $\overline{B_\varepsilon^n} \cap f^{-1}(S_\delta^{p-1})$ is a smooth manifold with boundary and it is diffeomorphic to 
$\overline{B_{\varepsilon_0}^n} \cap f^{-1}(S_{\delta(\varepsilon_0)}^{p-1})$. 
\item $\partial(\overline{B_\varepsilon^n} \cap f^{-1}(\overline{B_\delta^{p}}))$ is homeomorphic to $S_\varepsilon^{n-1}$.
\item The restricted mapping $f : \overline{B_\varepsilon^n} \cap f^{-1}(S_\delta^{p-1}) \rightarrow S_\delta^{p-1}$ is topologically
stable ($C^\infty$ stable if $(n,p)$ is a nice pair) and its topological type is independent of $\varepsilon$ and $\delta$.
\end{enumerate}
Here $B_\varepsilon^n$ denotes the open ball of radius $\varepsilon$
centered at $0$ and $S_\varepsilon^{n-1}$ the sphere of radius
$\varepsilon$ centered at $0$ in $\mathbb{R}^n$.

Case II) If the origin $0$ is isolated in $f^{-1}(0)$, i.e $0 \notin \overline{f^{-1}(0) \setminus \{0\}}$, then
there exists a positive number $\varepsilon_0$ such that for every $\varepsilon$ with $0< \varepsilon \le \varepsilon_0$
the following properties hold:
\begin{enumerate}
\item $f^{-1}(S_\varepsilon^{p-1})$ is diffeomorphic to 
$S_\varepsilon^{n-1}$.
\item The restricted mapping $f : f^{-1}(S_\varepsilon^{p-1}) \rightarrow S_\varepsilon^{p-1}$ is topologically
stable ($C^\infty$ stable if $(n,p)$ is a nice pair) and its topological type is independent of $\varepsilon$.
\end{enumerate}

We will focus first on Case I). Note that in this case, $\overline{B_\varepsilon^n} \cap f^{-1}(\overline{B_\delta^{p}})$ is a manifold with corners
whose topological boundary is the manifold with corners $\overline{B_\varepsilon^n} \cap f^{-1}(S_\delta^{p-1}) \cup S^{n-1}_\varepsilon \cap f^{-1}(\overline{B_\delta^p})$. 
We will use the following notations : $B_{\varepsilon,\delta}=f^{-1}(\overline{B_\delta^p}) \cap \overline{B_\varepsilon^n}$,
$\partial B_{\varepsilon,\delta}= \overline{B_\varepsilon^n} \cap f^{-1}(S_\delta^{p-1}) \cup S^{n-1}_\varepsilon \cap f^{-1}(\overline{B_\delta^p})$,
$C_{\varepsilon,\delta}= B_\varepsilon^n \cap f^{-1}(S_\delta^{p-1})$ and 
$I_{\varepsilon,\delta}$ is the topological interior of $B_{\varepsilon,\delta}$.

Let us denote by $\partial f$ the restricted mapping $f_{\vert C_{\varepsilon,\delta} } : C_{\varepsilon,\delta} \to S_\delta^{p-1}$ and let us assume that it is a Morin mapping. 
Let us consider a perturbation $\tilde{f}$ of $f$ such that  $\tilde{f}_{\vert I_{\varepsilon,\delta}}  :
I_{\varepsilon,\delta} \to B_\delta^{p}$ is a Morin mapping and $\tilde{f}=f$ in a neighborhood of $C_{\varepsilon,\delta}$.

Our aim is to generalize Theorem 2 of \cite{Fukuda1985} which deals with
map-germs from $\mathbb{R}^n$ to $\mathbb{R}$, i.e to relate
the topology of $\Lk(f)=f^{-1}(0) \cap S_\varepsilon^{n-1}$ to the
topology of the singular set of $\tilde{f}$ and to the topology of the
singular set of $\partial f$. As in the previous sections,  
we will denote by $A_k(\tilde{f})$ (resp. $A_k(\partial f)$), the set of
singular points of $\tilde{f}$ (resp. $\tilde{f}$) of type $A_k$. 
The first result is a local version of Saeki's formula 
(Theorem 2.3 in \cite{Saeki}). 
 
\begin{thm}\label{RThm6C}
We have:
$$
\psi(\Lk(f))
\equiv
1+\sum_{k=1}^{p-1}
\psi(A_k(\partial f))+\# A_p(\tilde{f}) \bmod 2,
$$
where $\psi$ denotes the semi-characteristic.
\end{thm}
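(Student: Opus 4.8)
The plan is to run the mod $2$ version of Viro's calculus (Example~\ref{EulerChar} with $R=\BB{Z}/2\BB{Z}$) for the Morin perturbation $\tilde f$, with the Fukuda--Looijenga finiteness statements (1)--(4) of Case~I) playing the role of ``local triviality at infinity'', in close analogy with the proofs of Theorems~\ref{RThm1B} and~\ref{RThm6A}.

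First I would reduce the theorem to a mod $2$ computation of the Euler characteristic of a regular fibre. Let $R=\tilde f^{-1}(y')$ for a regular value $y'\in B_\delta^p$. Since $n-p$ is even, $R$ is a compact even-dimensional manifold with boundary, and properties (1)--(4) show that the perturbation has not altered the link, i.e.\ $\partial R$ is diffeomorphic to $\Lk(f)=f^{-1}(0)\cap S_\varepsilon^{n-1}$. The congruence for compact even-dimensional manifolds with boundary recalled just before Theorem~\ref{RThm1B} then gives $\chi_c(R)=\chi(R)\equiv\psi(\partial R)=\psi(\Lk(f))\bmod 2$, so it suffices to compute $\chi_c(R)$ modulo $2$.

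Next I would apply the method of Theorem~\ref{RThm1B} to $\tilde f$ regarded as a map of the compact manifold with corners $B_{\varepsilon,\delta}$ onto the closed ball $\overline{B_\delta^p}$, taking $\mathcal S(B_{\varepsilon,\delta})$ and $\mathcal S(\overline{B_\delta^p})$ to be the subalgebras generated by the singularity strata of $\tilde f$, the fibres, and the faces $C_{\varepsilon,\delta}$ and $S_\delta^{p-1}$. That this pair fits to $\tilde f$ — in particular that $\tilde f$ is locally trivial near the corner — is exactly what (1)--(4) guarantee, since $\tilde f=f$ near $C_{\varepsilon,\delta}$ and $f|_{C_{\varepsilon,\delta}}=\partial f$. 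Applying Corollary~\ref{Fubini2} to $\varphi(x)=\chi_c\bigl(\tilde f^{-1}(y')\cap\overline{B_\varepsilon(x)}\bigr)$ and then using additivity of $\chi_c$ exactly as for~\eqref{F22}, together with $\chi_c(B_{\varepsilon,\delta})\equiv 1$ (the set $\overline{B_\varepsilon^n}\cap f^{-1}(\overline{B_\delta^p})$ is contractible, its boundary being $S_\varepsilon^{n-1}$ by~(3)) and $\chi_c(\overline{B_\delta^p})=1$, I would obtain
$$
\psi(\Lk(f))\equiv\chi_c(R)\equiv 1+\medsum_{\nu}(1-c_\nu)\,\chi_c(\nu(\tilde f))\pmod 2 ,
$$
the sum running over the singularity strata of $\tilde f$ on $B_{\varepsilon,\delta}$: the interior ones $A_k(\tilde f)\subset I_{\varepsilon,\delta}$ ($1\le k\le p$) and the ones on the face $C_{\varepsilon,\delta}$, which are the strata $A_k(\partial f)$ ($1\le k\le p-1$) of $\partial f$.

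It then remains to evaluate and reorganize the local terms. For an interior $A_k$-stratum the tables of Section~\ref{Euler characteristics of local generic fibers} give $1-c_{A_k}\equiv k\bmod 2$, so the even-$k$ strata drop out and the interior part reduces to $\medsum_{k:\,\odd}\chi_c(A_k(\tilde f))$; for a stratum on $C_{\varepsilon,\delta}$ the germ of $\tilde f=f$ is that of $\partial f$ with one extra target coordinate, so the relevant local fibre is a manifold with boundary and the contribution is a semicharacteristic of the corresponding $A_k(\partial f)$. To merge the surviving interior and boundary terms into the single expression $\medsum_{k=1}^{p-1}\psi(A_k(\partial f))+\#A_p(\tilde f)$, I would prove, by a normal-form computation along $C_{\varepsilon,\delta}$ parallel to Proposition~\ref{RProp6A}, that each $\overline{A_k(\tilde f)}$ is a compact manifold with corners whose boundary is $\overline{A_{k+1}(\tilde f)}$ glued to $\overline{A_k(\partial f)}$ along $\overline{A_{k+1}(\partial f)}$, and then telescope the resulting Euler- and semi-characteristic identities in $k$ (this is also what forces the top $0$-dimensional stratum to appear as $\#A_p(\tilde f)$). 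The main obstacle will be precisely this last step — the corner stratification of the $\overline{A_k(\tilde f)}$'s, the exact local contribution of a ``boundary $A_k$'', and the verification that the subset algebras genuinely fit to $\tilde f$ at the corner — i.e.\ the part where the geometry of the perturbation near $C_{\varepsilon,\delta}$ and the finiteness properties (1)--(4) really do the work.
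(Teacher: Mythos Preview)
Your overall strategy---Viro's integral calculus modulo $2$, together with the identification $\chi_c(\tilde f^{-1}(\tilde\delta))\equiv\psi(\Lk(f))$---matches the paper's. The divergence is in the choice of domain: you propose to run the calculus on the \emph{compact} manifold with corners $B_{\varepsilon,\delta}$ over $\overline{B_\delta^p}$, which forces you to stratify the boundary faces and to determine local invariants $c_\nu$ at points of $C_{\varepsilon,\delta}$ (and, in principle, on the other face and at the corners). This is precisely the ``main obstacle'' you flag at the end, and the paper sidesteps it entirely by working instead with the \emph{open} map $\tilde f:I_{\varepsilon,\delta}\to B_\delta^p$. On the open domain the only singularity strata are the interior $A_k(\tilde f)$, so Theorems~\ref{RThm1} and~\ref{RThm1B} together with additivity of $\chi_c$ give directly
\[
\psi(\Lk(f))\equiv\medsum_{k:\e}\chi_c(A_k(\tilde f)\cap I_{\varepsilon,\delta})
\quad\text{and}\quad
1\equiv\chi_c(I_{\varepsilon,\delta})\equiv\medsum_{k\ge0}\chi_c(A_k(\tilde f)\cap I_{\varepsilon,\delta})\pmod 2,
\]
with no boundary terms in the integral calculus at all. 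The strata $A_k(\partial f)$ enter only at the very last step, as pieces of the boundary of the compact manifold $\overline{A_k(\tilde f)\cap I_{\varepsilon,\delta}}$, via the closure relation
\[
\overline{A_k(\tilde f)\cap I_{\varepsilon,\delta}}
=\bigl(A_k(\tilde f)\cap I_{\varepsilon,\delta}\bigr)\sqcup\bigl(\overline{A_{k+1}(\tilde f)}\cap I_{\varepsilon,\delta}\bigr)\sqcup\bigl(\overline{A_k(\tilde f)}\cap C_{\varepsilon,\delta}\bigr)
\]
and the identification $\overline{A_k(\tilde f)}\cap C_{\varepsilon,\delta}=\overline{A_k(\partial f)}$; the stated formula then drops out by telescoping in $k$. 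Your route could presumably be completed, but you would have to extend Lemma~\ref{L1} to boundary and corner points and compute the boundary $c_\nu$'s honestly---work that the open-domain choice renders unnecessary.
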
 
\begin{proof} 
Note that for $\tilde{\delta}$ a sufficiently small regular value of
$\tilde{f}$ ($\vert \tilde{\delta} \vert \le \delta$), we have: 
$$
\chi_c(\tilde{f}^{-1}(\tilde{\delta})\cap I_{\varepsilon,\delta}) 
\equiv
\chi(\tilde{f}^{-1}(\tilde{\delta})\cap I_{\varepsilon,\delta})
\equiv
\chi(\tilde{f}^{-1}(\tilde{\delta}) \cap B_{\varepsilon,\delta})
\equiv
\psi(\tilde{f}^{-1}(\tilde{\delta}) \cap S_\varepsilon^{n-1})
\equiv\psi(\Lk(f)) \mod{2}.
$$
The last equality comes from the fact that $f$ has an isolated
 singularity, that $\tilde{f}^{-1}(\tilde{\delta})$
intersects $S_\varepsilon^{n-1}$ transversally and that $\tilde{f}$ is
 close to $f$.

On the one hand, applying Theorem \ref{RThm1},  Theorem \ref{RThm1B} and their corollaries to the restriction of $\tilde{f}$ to $I_{\varepsilon,\delta}$, we obtain:
$$
\medsum_{k:\e} \chi_c(A_k(\tilde{f} )\cap
I_{\varepsilon,\delta}) \equiv \psi(\Lk(f)) \bmod 2.
$$

On the other hand, by additivity, we have:
$$
1 
\equiv\chi_c (I_{\varepsilon,\delta}) 
\equiv\sum_k \chi_c(I_{\varepsilon,\delta} \cap A_k(\tilde{f})) \bmod 2.
$$
For each $k \ge 1$, we have:
$$\overline{A_k(\tilde{f}) \cap I_{\varepsilon,\delta}} =
A_k(\tilde{f})\cap I_{\varepsilon,\delta} \sqcup \overline{A_{k+1}(\tilde{f})} \cap I_{\varepsilon,\delta} \sqcup 
\overline{A_k(\tilde{f})} \cap C_{\varepsilon,\delta},$$ 
because if $\varepsilon$ and $\delta$ are small enough the singular set of $\tilde{f}$ does not intersect $f^{-1}(\overline{B_\delta^p}) \cap S_\varepsilon^{n-1}$.
Before carrying on with our computations, let us observe that for 
$k \in \{1,\ldots,p-1\}$, $\overline{A_k(\tilde{f})} \cap
 C_{\varepsilon,\delta} = A_k(\partial f)$. 
It is not difficult to see 
 this with the characterization of the $A_k$ sets by the ranks of the
 iterate jacobians. Moreover, using the characterization of the $A_k^+$
 and $A_k^-$ sets by the Euler characteristic of the nearby fiber, we
 can say that 
$\overline{A_k^+(\tilde{f})} \cap C_{\varepsilon,\delta}= A_k^+(\partial
 f)$ 
and 
$\overline{A_k^-(\tilde{f})} \cap C_{\varepsilon,\delta} = A_k^-(\partial f)$.
Hence:
\begin{align*}
\chi(\overline{A_k(\tilde{f})\cap I_{\varepsilon,\delta}})
\equiv&\chi_c(\overline{A_k(\tilde{f}) \cap I_{\varepsilon,\delta}})
\\
\equiv&\chi_c(A_k(\tilde{f}) \cap I_{\varepsilon,\delta}) 
+\chi_c(\overline{A_{k+1}(\tilde{f})} \cap I_{\varepsilon,\delta})
+\chi_c(\overline{A_k(\tilde{f})}\cap C_{\varepsilon,\delta})
\\
\equiv&
\chi_c(A_k(\tilde{f}) \cap I_{\varepsilon,\delta})
+\chi_c(\overline{A_{k+1}(\tilde{f})} \cap I_{\varepsilon,\delta}) 
\pmod 2,
\end{align*}
because $\overline{A_k(\tilde{f})} \cap C_{\varepsilon,\delta}$ is a compact boundary. 
Furthermore, we have:
$$\chi(\overline{A_{k+1}(\tilde{f}) \cap I_{\varepsilon,\delta}})=\chi_c (\overline{A_{k+1}(\tilde{f})} \cap 
I_{\varepsilon,\delta} )+\chi_c (\overline{A_{k+1}(\tilde{f})} \cap 
C_{\varepsilon,\delta})=\chi_c(\overline{A_{k+1}(\tilde{f})} \cap 
I_{\varepsilon,\delta} ).$$
Finally, for each $k$, 
$
\chi_c(A_k(\tilde{f}) \cap I_{\varepsilon,\delta})
=
\chi(\overline{A_k(\tilde{f}) \cap I_{\varepsilon,\delta}})
+
\chi(\overline{A_{k+1}(\tilde{f}) \cap I_{\varepsilon,\delta}}),
$
and so:
\begin{align*}
\psi(\Lk(f))\equiv&
1+\sum_{k=1}^{p} \chi(\overline{A_k(\tilde{f}) \cap
 I_{\varepsilon,\delta}}) \bmod 2,\\
\psi(\Lk(f))\equiv& 
1+\sum_{k=1}^{p-1} \psi (A_k(\partial f) )+\# A_p(\tilde{f})
\bmod 2.
\end{align*}
\end{proof} 
Let us examine some special cases. 
When $p=1$, we find:
$$
\psi(\Lk(f))\equiv 1+\#A_1(\tilde{f})\equiv1+\deg_0 \nabla f \pmod{2},
$$
where $\deg_ 0 \nabla f$ is the topological degree of the map 
$\frac{\nabla f}{\Vert \nabla f \Vert} : S_\varepsilon^{n-1} \to
S^{n-1}$. This due to the fact 
$\tilde{f}$ is a Morse function and the points in 
$A_1(\tilde{f})$ are exactly its critical points. 
When $p=2$, we find: 
$$
\psi(\Lk(f))\equiv1 +\psi(A_1(\partial f))+\#A_2(\tilde{f}) \bmod 2.
$$
If $\tilde{f}$ is close to $f$ then $\psi(A_1(\partial f))$ is equal to 
$\frac{1}{2}b(C(f)) \bmod 2$ where $C(f)$ denotes critical locus of $f$ and $b(C(f))$ the number of branches of $C(f)$. 
Hence:
$$
\psi(\Lk(f))\equiv1+\frac{1}{2}b(C(f)) +\# A_2(\tilde{f}) \bmod 2.
$$
Since $b(C(f))$ is a topological invariant of $f$, we deduce that $\# A_2 (\tilde{f}) \bmod 2$ is a topological invariant of $f$.
Similarly if $p=3$, this gives:
$$
\psi(\Lk(f))\equiv1+\psi(C(f) \cap \partial B_{\varepsilon,\delta})
+\frac{1}{2}b(C(f_{\vert C(f)})) +\# A_3(\tilde{f}) \bmod 2.
$$
In the sequel, we will improve Theorem \ref{RThm6C} in some situations. 
Let us assume that $n-p$ is odd.
\begin{thm}\label{RThm6D}
If $n-p$ is odd, then we have:
$$
\chi(\Lk(f))
=2-2 \sum_{k: \odd}
\Bigl[
\chi(\overline{A_k^+(\tilde{f}) \cap I_{\varepsilon,\delta}})
-\chi(\overline{A_k^-(\tilde{f}) \cap I_{\varepsilon,\delta}})
\Bigr].
$$
Furthermore, when $n$ is odd and $p$ is even, we have:
$$
\chi(\Lk(f))=
2-\sum_{k: \odd} 
\Bigl[
\chi(\overline{A_k^+(\partial f)}) 
-\chi(\overline{A_k^-(\partial f)})
\Bigr].
$$
\end{thm}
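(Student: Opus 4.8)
The plan is to derive both identities from Corollary \ref{RMorin1}, applied in turn to the two auxiliary Morin maps $\tilde f|_{I_{\varepsilon,\delta}}\colon I_{\varepsilon,\delta}\to B_\delta^p$ and $\partial f\colon C_{\varepsilon,\delta}\to S_\delta^{p-1}$, together with a few Euler characteristic computations that the conic structure of Case~I makes available. First I would record three preliminary identities. Since $n-p$ is odd, for a regular value $\tilde\delta$ of $\tilde f$ with $0<|\tilde\delta|<\delta$ the fibre $\tilde f^{-1}(\tilde\delta)\cap\overline{B_\varepsilon^n}$ is a compact $(n-p)$-dimensional manifold whose boundary, because $\tilde f=f$ near $S_\varepsilon^{n-1}$ and $f$ is a submersion there onto a neighbourhood of $0$, is diffeomorphic to $\Lk(f)$ by Ehresmann's theorem; as $\tilde f^{-1}(\tilde\delta)$ misses $f^{-1}(S_\delta^{p-1})$ and an odd-dimensional compact manifold with boundary has half the Euler characteristic of its boundary, this gives $\chi_f:=\chi_c(\tilde f^{-1}(\tilde\delta)\cap I_{\varepsilon,\delta})=-\tfrac12\chi(\Lk(f))$. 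Next, $B_{\varepsilon,\delta}$ deformation retracts onto the cone over $\Lk(f)$ and $\partial B_{\varepsilon,\delta}\cong S_\varepsilon^{n-1}$, so $\chi_c(I_{\varepsilon,\delta})=\chi(B_{\varepsilon,\delta})-\chi(S_\varepsilon^{n-1})=(-1)^n$. Finally, writing $S_\varepsilon^{n-1}\cong\partial B_{\varepsilon,\delta}$ as the union of $\overline{C_{\varepsilon,\delta}}$ with the $D^p$-bundle $S_\varepsilon^{n-1}\cap f^{-1}(\overline{B_\delta^p})$ over $\Lk(f)$, glued along the $S^{p-1}$-bundle $S_\varepsilon^{n-1}\cap f^{-1}(S_\delta^{p-1})$ over $\Lk(f)$, a Mayer--Vietoris count shows that when $n$ is odd and $p$ is even (so $\chi(S^{p-1})=0$) one has $\chi_c(C_{\varepsilon,\delta})=\chi(\overline{C_{\varepsilon,\delta}})=2-\chi(\Lk(f))$.

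Applying Corollary \ref{RMorin1} to $\tilde f|_{I_{\varepsilon,\delta}}$ (a Morin map with $\dim I_{\varepsilon,\delta}-\dim B_\delta^p=n-p$ odd; local triviality at infinity coming from the Case~I normal form near $\partial B_{\varepsilon,\delta}$), and using $\chi_c(B_\delta^p)=(-1)^p$, I obtain
$$
(-1)^n+\frac{(-1)^p}{2}\chi(\Lk(f))=\sum_{k:\odd}\bigl[\chi_c(A_k^+(\tilde f)\cap I_{\varepsilon,\delta})-\chi_c(A_k^-(\tilde f)\cap I_{\varepsilon,\delta})\bigr].
$$
To replace the locally closed strata by their closures, I would argue as in the proofs of Theorems \ref{RThm6A} and \ref{RThm6C}: the closure of $A_k^\pm(\tilde f)\cap I_{\varepsilon,\delta}$ in $B_{\varepsilon,\delta}$ is the disjoint union of $A_k^\pm(\tilde f)\cap I_{\varepsilon,\delta}$, of $\overline{A_{k+1}(\tilde f)}\cap I_{\varepsilon,\delta}$, and of $\overline{A_k^\pm(\partial f)}$, the last being a compact manifold with boundary $\overline{A_{k+1}(\partial f)}$ (compact because the singular set of $\tilde f$ avoids $S_\varepsilon^{n-1}\cap f^{-1}(\overline{B_\delta^p})$). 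Taking $\chi_c$, the middle term is the same for both signs, so for $k$ odd $\chi(\overline{A_k^+(\tilde f)\cap I_{\varepsilon,\delta}})-\chi(\overline{A_k^-(\tilde f)\cap I_{\varepsilon,\delta}})$ equals $\chi_c(A_k^+(\tilde f)\cap I_{\varepsilon,\delta})-\chi_c(A_k^-(\tilde f)\cap I_{\varepsilon,\delta})+T_k$, with $T_k:=\chi(\overline{A_k^+(\partial f)})-\chi(\overline{A_k^-(\partial f)})$.

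It then remains to evaluate $\sum_{k:\odd}T_k$. If $n$ is even and $p$ is odd, then for $k$ odd the manifold $\overline{A_k^\pm(\partial f)}$ has odd dimension $p-1-k$, hence $\chi(\overline{A_k^\pm(\partial f)})=\tfrac12\chi(\overline{A_{k+1}(\partial f)})$ regardless of the sign and $T_k=0$; combining this with the relations of the previous paragraph gives $\chi(\Lk(f))=2-2\sum_{k:\odd}[\chi(\overline{A_k^+(\tilde f)\cap I_{\varepsilon,\delta}})-\chi(\overline{A_k^-(\tilde f)\cap I_{\varepsilon,\delta}})]$. If $n$ is odd and $p$ is even, I apply Corollary \ref{RMorin1} instead to $\partial f\colon C_{\varepsilon,\delta}\to S_\delta^{p-1}$ (again Morin, with $\dim C_{\varepsilon,\delta}-\dim S_\delta^{p-1}=n-p$ odd): here $\chi_c(S_\delta^{p-1})=\chi(S^{p-1})=0$ and a regular fibre of $\partial f$ is again an odd-dimensional compact manifold with boundary $\cong\Lk(f)$, so the corollary, after converting to closures exactly as above, reads $\chi_c(C_{\varepsilon,\delta})=\sum_{k:\odd}T_k$; with $\chi_c(C_{\varepsilon,\delta})=2-\chi(\Lk(f))$ this is precisely the second asserted formula and shows $\sum_{k:\odd}T_k=2-\chi(\Lk(f))$, which, fed back into the relations of the previous paragraph, again yields the first formula.

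The main obstacle is the stratification bookkeeping in the middle step: verifying that the closure of $A_k^\pm(\tilde f)\cap I_{\varepsilon,\delta}$ meets $C_{\varepsilon,\delta}$ in exactly $\overline{A_k^\pm(\partial f)}$ with the $\pm$ labels respected (this uses the characterization of $A_k^\pm$ by the Euler characteristic of the nearby fibre, as in Theorem \ref{RThm6C}) and that $\overline{A_k^\pm(\partial f)}$ is genuinely compact and has no corners. One must also check local triviality at infinity for the two auxiliary maps, which should follow from the Case~I local models near $S_\varepsilon^{n-1}$. Granting these, everything else is a parity-sensitive but mechanical manipulation of the three Euler characteristic identities established at the start.
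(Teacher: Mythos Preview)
Your proof is correct and follows essentially the same strategy as the paper: both establish $\chi_f=-\tfrac12\chi(\Lk(f))$ and $\chi_c(I_{\varepsilon,\delta})=(-1)^n$ (you more slickly via $\partial B_{\varepsilon,\delta}\cong S^{n-1}$, the paper by decomposing $B_{\varepsilon,\delta}$ case by case), apply Corollary~\ref{RMorin1} to $\tilde f|_{I_{\varepsilon,\delta}}$, and convert the $\chi_c$-differences on strata to Euler characteristics of closures. The one genuine deviation is in the boundary formula for $n$ odd and $p$ even: the paper exploits directly that $\overline{A_k^\pm(\tilde f)\cap I_{\varepsilon,\delta}}$ is an odd-dimensional manifold with corners, writing its Euler characteristic as half that of its boundary, which then splits into $\overline{A_k^\pm(\partial f)}$ and $\overline{A_{k+1}(\tilde f)\cap I_{\varepsilon,\delta}}$; you instead apply Corollary~\ref{RMorin1} a second time to $\partial f$, using $\chi(S^{p-1})=0$ to annihilate the fibre term and your Mayer--Vietoris computation of $\chi_c(C_{\varepsilon,\delta})=2-\chi(\Lk(f))$ for the source. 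Your route is more uniform (the same lemma twice) and avoids the corner analysis, at the cost of verifying local triviality at infinity for $\partial f$ and computing $\chi_c(C_{\varepsilon,\delta})$. One minor slip: the regular fibre of $\partial f$ is $f^{-1}(y')\cap B_\varepsilon^n$, which is \emph{open}, not a compact manifold with boundary; but since the term it enters is multiplied by $\chi(S^{p-1})=0$, this does not affect the argument.
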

\begin{proof}
With the same notations as in Theorem \ref{RThm6C}, we can write:
$$
\chi_c(\tilde{f}^{-1}(\tilde{\delta})\cap B_{\varepsilon,\delta}) 
= \chi_c(\tilde{f}^{-1}(\tilde{\delta})\cap I_{\varepsilon,\delta} )
+\chi_c(\tilde{f}^{-1}(\tilde{\delta})\cap\partial B_{\varepsilon,\delta}),
$$ 
thus:
$$
\frac{1}{2} \chi(\Lk(f))
=\chi_c(\tilde{f}^{-1}(\tilde{\delta}) 
\cap I_{\varepsilon,\delta})+\chi(\Lk(f)).
$$
Therefore, we get:
$$
\chi_c(\tilde{f}^{-1}(\tilde{\delta}) \cap
 I_{\varepsilon,\delta})=-\frac{1}{2} \chi(\Lk(f)).
$$
Applying Corollary \ref{RMorin1}, we obtain:
$$
\chi_c(I_{\varepsilon,\delta})
+\frac{1}{2} \chi(\Lk(f)) \chi_c(B_\delta^p)
=\sum_{k : \textrm{odd}}\chi_c (A_k^+(\tilde{f})\cap 
I_{\varepsilon,\delta})-\chi_c(A_k^-(\tilde{f}) \cap I_{\varepsilon,\delta}).
$$ 
Let us compute $\chi_c(I_{\varepsilon,\delta})$. 
We have:
$$
 \chi(B_{\varepsilon,\delta})=\chi_c(B_{\varepsilon,\delta})=
\chi_c(I_{\varepsilon,\delta} ) + 
\chi_c(B^n_\varepsilon \cap f^{-1}(S^{p-1}_\delta)) +
\chi_c(S^{n-1}_\varepsilon \cap f^{-1} (\overline{B^p_\delta})). $$
If $n$ is odd and $p$ is even, we have:
$$\chi(B_{\varepsilon,\delta})=\frac{1}{2}\chi(S^{n-1}_\varepsilon \cap f^{-1}(\overline{B^p_\delta}))+
\frac{1}{2}\chi(\overline{B^n_\varepsilon}
\cap f^{-1}(S^{p-1}_\delta)),$$
and:
\begin{align*}
\chi(\overline{B^n_\varepsilon }\cap f^{-1}(S^{p-1}_\delta))
=&\chi_c(\overline{B^n_\varepsilon} \cap f^{-1}(S^{p-1}_\delta))\\
=&
\chi_c(B^n_\varepsilon \cap f^{-1}(S^{p-1}_\delta)) 
+ \chi_c(S^{n-1}_\varepsilon \cap f^{-1}(S^{p-1}_\delta))
\\
=&
\chi_c(B^n_\varepsilon \cap f^{-1}(S^{p-1}_\delta))
+\chi(S^{n-1}_\varepsilon \cap f^{-1}(S^{p-1}_\delta)) 
\\
=&
\chi_c (B^n_\varepsilon  \cap f^{-1}(S^{p-1}_\delta)).
\end{align*}
Thus we obtain:
\begin{align*}
\chi_c(I_{\varepsilon,\delta})
=&
\textrm{\small$
\frac{1}{2}\chi(S^{n-1}_\varepsilon \cap f^{-1}( \overline{B^p_\delta)})
+\frac{1}{2}\chi(\overline{B^n_\varepsilon}\cap f^{-1} (S^{p-1}_\delta))
-\chi (\overline{B^n_\varepsilon} \cap f^{-1}(S^{p-1}_\delta))
-\chi(S^{n-1}_\varepsilon \cap f^{-1}(\overline{B^p_\delta}))$} \\
=&
-\frac{1}{2} \chi(\overline{B^n_\varepsilon} \cap f^{-1}(S^{p-1}_\delta)) -\frac{1}{2} \chi(S^{n-1}_\varepsilon \cap
f^{-1}(\overline{B^p_\delta})) 
=-\chi(B_{\varepsilon,\delta})=-1.
\end{align*} 
Finally we get:
\begin{align*}
\frac{1}{2}\chi(\Lk(f))
=&1+ 
\sum_{k: \odd}
\Bigl[
\chi_c(A_k^+(\tilde{f})\cap I_{\varepsilon,\delta}) 
-\chi_c(A_k^-(\tilde{f})\cap I_{\varepsilon,\delta})
\Bigr],
\end{align*}
which means:
$$
\chi(\Lk(f))
= 2 +2
\sum_{k : \odd} 
\Bigl[
 \chi_c(A_k^+(\tilde{f})\cap I_{\varepsilon,\delta}) 
-\chi_c(A_k^-(\tilde{f})\cap I_{\varepsilon,\delta})
\Bigr].
$$
Since dim $A_k^+(\tilde{f})=$ dim $A_k^-(\tilde{f})=p-k$ is odd, 
we can establish using the same arguments as above that:
\begin{align*}
\chi_c(A_k^+(\tilde{f}) \cap I_{\varepsilon,\delta})
=&-\chi(\overline{A_k^+(\tilde{f}) \cap I_{\varepsilon,\delta}})
=
-\frac{1}{2}\chi(\overline{A_k^+(\tilde{f})}\cap C_{\varepsilon,\delta})
-\frac{1}{2}\chi(\overline{A_{k+1}(\tilde{f})\cap I_{\varepsilon,\delta}}),\\
\chi_c(A_k^-(\tilde{f})\cap I_{\varepsilon,\delta})
=&-\chi(\overline{A_k^-(\tilde{f}) \cap I_{\varepsilon,\delta}})
=-\frac{1}{2}\chi(\overline{A_k^-(\tilde{f})}\cap
 C_{\varepsilon,\delta}) -\frac{1}{2}
\chi(\overline{A_{k+1}(\tilde{f}) \cap I_{\varepsilon,\delta} }).
\end{align*}
Finally, we obtain:
$$
\chi(\Lk(f))
= 
2-2\sum_{k : \odd}
\Bigl[
 \chi(\overline{A_k^+(\tilde{f})\cap I_{\varepsilon,\delta}}) 
-\chi(\overline{A_k^-(\tilde{f})\cap I_{\varepsilon,\delta}})
\Bigr]
=
2-\sum_{k : \odd} 
\Bigl[
\chi(\overline{A_k^+(\partial f)}) 
-\chi(\overline{A_k^-(\partial f)})
\Bigr].
$$
If $n$ is even and $p$ is odd, then:
\begin{align*}
\chi_c(B_\varepsilon^n \cap f^{-1}(S^{p-1}_\delta))
=&-\chi(B_{\varepsilon}^n \cap f^{-1}(S^{p-1}_\delta))
=-\frac{1}{2} \chi (S^{n-1}_\varepsilon\cap f^{-1}(S^{p-1}_\delta)),
\\
\chi_c (S^{n-1}_\varepsilon \cap f^{-1}(\overline{B^p_\delta}))
=&\chi(S^{n-1}_\varepsilon \cap f^{-1}(\overline{B^p_\delta}))
=\frac{1}{2}\chi(S^{n-1}_\varepsilon \cap f^{-1}(S^{p-1}_\delta)).
\end{align*}
So
$\chi_c(I_{\varepsilon,\delta})=\chi(B_{\varepsilon,\delta})=1$,  
and:
$$
1-\sum_{k: \odd} 
\chi_c(A_k^+(\tilde{f})\cap I_{\varepsilon,\delta}) 
+\sum_{k: \odd} 
\chi_c(A_k^-(\tilde{f})\cap I_{\varepsilon,\delta}) 
=
\frac{1}{2} \chi (\Lk(f)),
$$ 
and then:
$$
\chi(\Lk(f))
=
2-2 \left(
\sum_{k: \odd} 
\chi_c(A_k^+(\tilde{f})\cap I_{\varepsilon,\delta}) 
-\sum_{k: \odd} 
\chi_c(A_k^-(\tilde{f})\cap I_{\varepsilon,\delta})
\right).
$$ 
Here $\dim A_k^+(\tilde{f})=\dim A_k^-(\tilde{f})=p-k$ is even 
when $k$ is odd. We have:
\begin{align*}
\chi(\overline{A_k^+(\tilde{f}) \cap I_{\varepsilon,\delta}}) 
=&
\chi_c(\overline{A_k^+(\tilde{f})\cap I_{\varepsilon,\delta}}) \\
=&
\chi_c(A_k^+(\tilde{f})\cap I_{\varepsilon,\delta}) 
+
\chi_c(\overline{A_{k+1}(\tilde{f})}\cap I_{\varepsilon,\delta}) 
+
\chi_c(\overline{A_k^+(\tilde{f})}\cap C_{\varepsilon,\delta}) \\
=&
\chi_c(A_k^+(\tilde{f}) \cap I_{\varepsilon,\delta}) 
+
\chi_c(\overline{A_{k+1}(\tilde{f})} \cap I_{\varepsilon,\delta}) 
+
\chi(\overline{A_k^+(\tilde{f})} \cap C_{\varepsilon,\delta})\\
=&
\chi_c(A_k^+(\tilde{f}) \cap I_{\varepsilon,\delta}) 
+
\chi_c(\overline{A_{k+1}(\tilde{f})} \cap I_{\varepsilon,\delta}).
\end{align*}
Hence:
$$
\chi(\overline{A_k^+(\tilde{f}) \cap I_{\varepsilon,\delta}})
-\chi(\overline{A_k^-(\tilde{f}) \cap I_{\varepsilon,\delta}})
=\chi_c(A_k^+(\tilde{f}) \cap I_{\varepsilon,\delta})
-\chi(A_k^-(\tilde{f}) \cap I_{\varepsilon,\delta}).
$$
\end{proof}

The same results hold in Case II) replacing $B^n_\varepsilon \cap f^{-1}(S_\varepsilon^{p-1})$ with
$f^{-1}(S_\varepsilon^{p-1})$, which is diffeomorphic to
$S_\varepsilon^{n-1}$,  $B_{\varepsilon,\delta}$ with $f^{-1}(\overline{B^p_\varepsilon})$, $I_{\varepsilon,\delta}$ with the topological interior of $f^{-1}(\overline{B^p_\varepsilon})$ and $\chi(\Lk(f)))$ with $0$.

Now we work with map-germs from $(\mathbb{R}^n,0)$ to $(\mathbb{R}^n,0)$. Let $f : (\mathbb{R}^n,0) \rightarrow
(\mathbb{R}^n,0)$ be a map-germ such that $0$ is isolated in $f^{-1}(0)$. We assume that $f$ is generic in the sense of
Theorem 3 in \cite{Fukuda1981} : there exists a positive number $\varepsilon_0$ such that for any number $\varepsilon$ with
$0<\varepsilon \le \varepsilon_0$, we have:
\begin{enumerate}
\item $\tilde{S}^{n-1}_\varepsilon = f^{-1}(S_\varepsilon^{n-1})$ is a homotopy $(n-1)$-sphere which, if $n \not= 4,5$, is
diffeomorphic to the natural $(n-1)$-sphere $S^{n-1}$,
\item the restricted mapping $f_{\vert \tilde{S}_\varepsilon^{n-1}} : \tilde{S}_\varepsilon^{n-1} \rightarrow
S_\varepsilon^{n-1}$ is topological stable ($C^\infty$ stable if $(n,p)$ is a nice pair),
\item letting $\tilde{B}_\varepsilon^n=f^{-1}(\overline{B^n_\varepsilon})$, the restricted mapping $f_{\vert \tilde{B}^n_\varepsilon} : 
\tilde{B}_\varepsilon^n \setminus \{0\} \rightarrow \overline{B_\varepsilon^n} \setminus \{0\}$ is proper, topologically stable
($C^\infty$ stable if $(n,p)$ is nice) and topologically equivalent ($C^\infty$ equivalent if $(n,p)$ is nice) to the product
mapping:
$$
(f_{\vert \tilde{S}^{n-1}_\varepsilon}) \times
      \Id_{(0,\varepsilon)} : \tilde{S}_\varepsilon^{n-1} \times
      (0,\varepsilon)
\rightarrow S_\varepsilon^{n-1} \times (0,\varepsilon),
$$
defined by $(x,t) \mapsto (f(x),t)$, 
\item consequently, $f_{\vert \tilde{B}^n_\varepsilon} : \tilde{B}^n_\varepsilon \rightarrow \overline{B^n_\varepsilon}$ is topologically equivalent
to the cone:
$$C(f_{\vert \tilde{S}_\varepsilon^{n-1}}) :  \tilde{S}_\varepsilon^{n-1} \times [0,\varepsilon) /
\tilde{S}_\varepsilon^{n-1} \times \{0\} \rightarrow  S_\varepsilon^{n-1} \times [0,\varepsilon) /
S_\varepsilon^{n-1} \times \{0\},$$
of the stable mapping $f_{\vert \tilde{S}_\varepsilon^{n-1}} : \tilde{S}_\varepsilon^{n-1} \rightarrow S_\varepsilon^{n-1}$
defined by $$C(f_{\vert \tilde{S}_\varepsilon^{n-1}}) (x,t)=(f(x),t).$$ 
\end{enumerate}
Note that in this case $\tilde{B}_\varepsilon=f^{-1}(\overline{B_\varepsilon^n})$ is a smooth manifold with boundary
$f^{-1}(S_\varepsilon^{n-1})$. This last manifold has the homotopy type of $S^{n-1}$. 

We will keep the notations of the previous sections.
We denote by $\tilde{B}_\varepsilon$ the set $f^{-1}(\overline{B^n_\varepsilon})$, by $\tilde{I}_\varepsilon$ its
topological interior and by $\partial \tilde{B}_\varepsilon$ its
boundary. We denote by $\partial f$ the restricted mapping $f _{\vert
\partial \tilde{B}_\varepsilon} : \partial \tilde{B}_\varepsilon \to
S^{n-1}_\varepsilon$ and we assume that  it is a Morin mapping.

Let us consider a perturbation $\tilde{f}$ of $f$ such that
$\tilde{f}_{\vert \tilde{I}_\varepsilon}:
\tilde{I}_\varepsilon \rightarrow  B_\varepsilon^{n}$ is a Morin mapping
and $\tilde{f}=f$ in a neighborhood of $\partial
\tilde{B}_\varepsilon$. 

The main
result is a local version of Corollary \ref{RMorin4}. 
\begin{thm}\label{RThm6E} 
We have:
$$
\deg_0 f =\sum_{k:\e}
\Bigl[
\chi(\overline{A_k^+(\tilde{f}) \cap \tilde{I}_\varepsilon})
-\chi(\overline{A_k^-(\tilde{f}) \cap \tilde{I}_\varepsilon})
\Bigr],
$$
where $\deg_0 f$ is the local topological degree of $f$ at $0$.
\end{thm}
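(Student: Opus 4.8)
The plan is to apply Corollary \ref{RMorin4} to the Morin map $\tilde{f}|_{\tilde{I}_\varepsilon}:\tilde{I}_\varepsilon\to B_\varepsilon^n$, then to pass from the $\chi_c$ of the open singular strata to the $\chi$ of their closures exactly as in the proof of Theorem \ref{RThm6A}, absorbing the resulting boundary term by the square-dimensional Morin formula proved above applied to $\partial f$. So the whole argument is a bookkeeping reduction to facts already established.

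\textbf{Step 1: the interior formula.} First I would check that $\tilde{f}|_{\tilde{I}_\varepsilon}$ meets the hypotheses of Corollary \ref{RMorin4}: it is finite-to-one and Morin by construction, it has finitely many singularity types (only the $A_k$ with $k\le n$), and it is locally trivial at infinity because near $\partial\tilde{B}_\varepsilon$ it coincides with $f$, which by Fukuda's structure theorem (property (4) above) has the product form $\partial f\times\Id_{(0,\varepsilon)}$ on a collar; hence over a small neighbourhood of any $y\in B_\varepsilon^n$ and outside a large compact subset of $\tilde{I}_\varepsilon$ the map is a trivial fibration. Next I would identify its mapping degree: since $\tilde{f}=f$ near $\partial\tilde{B}_\varepsilon$, the map $\tilde{f}$ extends to a proper map $\tilde{f}:\tilde{B}_\varepsilon\to\overline{B^n_\varepsilon}$ of compact oriented manifolds with boundary sending $\partial\tilde{B}_\varepsilon=f^{-1}(S^{n-1}_\varepsilon)$ to $S^{n-1}_\varepsilon$, so its degree equals the degree of its boundary restriction $\partial f$, which is $\deg_0 f$; and this is the Theorem \ref{RThm1D}-degree of $\tilde{f}|_{\tilde{I}_\varepsilon}$. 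Since $\chi_c(B^n_\varepsilon)=(-1)^n$, Corollary \ref{RMorin4} yields
$$\medsum_{k:\e}\bigl[\chi_c(A_k^+(\tilde{f})\cap\tilde{I}_\varepsilon)-\chi_c(A_k^-(\tilde{f})\cap\tilde{I}_\varepsilon)\bigr]=(-1)^n\deg_0 f .$$

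\textbf{Step 2: closures and boundary correction.} For $k$ even, $\overline{A_k^\pm(\tilde{f})\cap\tilde{I}_\varepsilon}$ is the disjoint union of $A_k^\pm(\tilde{f})\cap\tilde{I}_\varepsilon$, of $\overline{A_{k+1}(\tilde{f})}\cap\tilde{I}_\varepsilon$, and of $\overline{A_k^\pm(\tilde{f})}\cap\partial\tilde{B}_\varepsilon$. Using the collar product form $\tilde{f}=\partial f\times\Id$ and the fact that suspension by the identity leaves the local topological degree unchanged, the last piece equals $\overline{A_k^\pm(\partial f)}$ with the signs matching (recall that for $k$ even $A_k^\pm$ is defined by the sign of the local degree). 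Additivity of $\chi_c$, compactness of these closures, and cancellation of the common term $\chi_c(\overline{A_{k+1}(\tilde{f})}\cap\tilde{I}_\varepsilon)$ give
$$\chi(\overline{A_k^+(\tilde{f})\cap\tilde{I}_\varepsilon})-\chi(\overline{A_k^-(\tilde{f})\cap\tilde{I}_\varepsilon})=\bigl[\chi_c(A_k^+(\tilde{f})\cap\tilde{I}_\varepsilon)-\chi_c(A_k^-(\tilde{f})\cap\tilde{I}_\varepsilon)\bigr]+\bigl[\chi(\overline{A_k^+(\partial f)})-\chi(\overline{A_k^-(\partial f)})\bigr].$$
Summing over even $k$ and inserting, on the one hand, the identity of Step 1 and, on the other hand, the square-dimensional Morin formula applied to the Morin map $\partial f:\partial\tilde{B}_\varepsilon\to S^{n-1}_\varepsilon$ (here $\partial\tilde{B}_\varepsilon$ is an oriented homotopy sphere and $\deg(\partial f)=\deg_0 f$), which gives $\medsum_{k:\e}[\chi(\overline{A_k^+(\partial f)})-\chi(\overline{A_k^-(\partial f)})]=(\deg_0 f)\,\chi(S^{n-1}_\varepsilon)=(1-(-1)^n)\deg_0 f$, I obtain $(-1)^n\deg_0 f+(1-(-1)^n)\deg_0 f=\deg_0 f$, which is the assertion.

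\textbf{Main obstacle.} The delicate points are all in Steps 1 and 2: verifying carefully that $\tilde{f}|_{\tilde{I}_\varepsilon}$ is locally trivial at infinity and that its Theorem \ref{RThm1D}-degree is exactly $\deg_0 f$; identifying $\overline{A_k^\pm(\tilde{f})}\cap\partial\tilde{B}_\varepsilon$ with $\overline{A_k^\pm(\partial f)}$ with the correct sign via the collar product structure; and confirming that the parity juggling $(-1)^n+(1-(-1)^n)=1$ really makes the two pieces fit regardless of $n$. The low-dimensional case $n=1$, where $S^{n-1}_\varepsilon$ is disconnected so the square-dimensional formula does not literally apply, should be recorded separately, but there it reduces to the elementary Morse count $\deg_0 f=\#A_1^+(\tilde f\vert_{\tilde I_\varepsilon})-\#A_1^-(\tilde f\vert_{\tilde I_\varepsilon})$.
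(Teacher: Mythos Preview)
Your argument is correct, but the paper takes a shorter, more direct route. After Step~1 (which the paper does identically), the paper converts $\chi_c$ to $\chi$ by the single observation
\[
\chi_c(A_k^+(\tilde f)\cap\tilde I_\varepsilon)-\chi_c(A_k^-(\tilde f)\cap\tilde I_\varepsilon)
=(-1)^{n-k}\Bigl[\chi(\overline{A_k^+(\tilde f)\cap\tilde I_\varepsilon})-\chi(\overline{A_k^-(\tilde f)\cap\tilde I_\varepsilon})\Bigr],
\]
proved ``as in Theorem~\ref{RThm6D}'': when $n-k$ is even the edge pieces $\overline{A_k^\pm(\tilde f)}\cap\partial\tilde B_\varepsilon$ are closed odd-dimensional manifolds and contribute $0$, while when $n-k$ is odd one uses $\chi_c(\mathrm{int}\,W)=-\chi(W)$ for an odd-dimensional compact manifold with boundary. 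Since $k$ is even, $(-1)^{n-k}=(-1)^n$ cancels the $(-1)^n$ from Step~1 and the proof ends.

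Your Step~2 instead keeps the boundary contribution $\chi(\overline{A_k^+(\partial f)})-\chi(\overline{A_k^-(\partial f)})$ explicit and then kills its sum over even $k$ by invoking the global Quine--Nakai formula for $\partial f:\partial\tilde B_\varepsilon\to S^{n-1}_\varepsilon$, yielding $(1-(-1)^n)\deg_0 f$ and the arithmetic $(-1)^n+(1-(-1)^n)=1$. This works and has the pleasant side effect of making the subsequent corollary (the formula $2\deg_0 f=\sum_{k:\e}[\chi(\overline{A_k^+(\partial f)})-\chi(\overline{A_k^-(\partial f)})]$ for $n$ odd) fall out of the same computation. The paper's argument is slimmer because it avoids calling an external theorem on $\partial f$; yours is a bit more structural in that it separates interior and boundary contributions and identifies each. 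Both are fine.
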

\begin{proof}
Using Corollary \ref{RMorin4}, we obtain:
$$
(\deg_0 f)(-1)^n = 
\sum_{k : \textrm{even}} \chi_c (A_k^+(\tilde{f}) \cap \tilde{I}_\varepsilon) -
\chi_c (A_k^-(\tilde{f}) \cap \tilde{I}_\varepsilon).
$$
It remains to relate the Euler characteristics with compact support to the topological Euler characteristics. But, as in Theorem
\ref{RThm6D}, we have:
$$
\chi_c (A_k^+(\tilde{f}) \cap \tilde{I}_\varepsilon) -
\chi_c (A_k^-(\tilde{f}) \cap \tilde{I}_\varepsilon)=  (-1)^{n-k} \left(
\chi(\overline{A_k^+(\tilde{f}) \cap \tilde{I}_\varepsilon})-
\chi(\overline{A_k^-(\tilde{f}) \cap \tilde{I}_\varepsilon}) \right) .$$
\end{proof}

\begin{cor}
If $n$ is odd, we have: 
$$ 
2\deg_0 f =
\sum_{k : \textrm{even}} 
\Bigl[
 \chi(\overline{A_k^+(\partial f)})
-\chi(\overline{A_k^-(\partial f)})
\Bigr].
$$
\end{cor}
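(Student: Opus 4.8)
The plan is to deduce this corollary from Theorem \ref{RThm6E} in exactly the way the second formula of Theorem \ref{RThm6D} was deduced from its first: when $n$ is odd, each relevant stratum of $\tilde f$ is an odd-dimensional compact manifold with boundary, so its Euler characteristic is half that of its boundary, and that boundary already refers to $\partial f$.

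First I would recall from Theorem \ref{RThm6E} that $\deg_0 f=\sum_{k:\e}\bigl[\chi(\overline{A_k^+(\tilde f)\cap\tilde I_\varepsilon})-\chi(\overline{A_k^-(\tilde f)\cap\tilde I_\varepsilon})\bigr]$, and fix an even $k$. Since $n$ is odd, $\dim A_k^\pm(\tilde f)=n-k$ is odd, so $\overline{A_k^\pm(\tilde f)\cap\tilde I_\varepsilon}$ is an odd-dimensional compact manifold whose boundary is built from $\overline{A_{k+1}(\tilde f)\cap\tilde I_\varepsilon}$ (the part inside $\tilde I_\varepsilon$; here $k+1$ is odd, so $A_{k+1}$ does not split) together with the part lying on $\partial\tilde B_\varepsilon$. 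Arguing as in the proof of Theorem \ref{RThm6C}, using the product (respectively cone) structure of $f$ near $\partial\tilde B_\varepsilon$ --- which forces a singular point of $\tilde f$ near $\partial\tilde B_\varepsilon$ to lie over a singular point of $\partial f$ of the same $A_k^{\pm}$-type, the sign being preserved because restricting a product $g\times\Id$ to a level set of the identity factor does not alter the local degree --- I would identify $\overline{A_k^\pm(\tilde f)}\cap\partial\tilde B_\varepsilon$ with $\overline{A_k^\pm(\partial f)}$.

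Then I would invoke the standard fact that a compact odd-dimensional manifold with boundary $W$ satisfies $\chi(W)=\tfrac12\chi(\partial W)$ (for instance from $\chi(W)=\chi(\partial W)+\chi(W,\partial W)$ together with Lefschetz duality $\chi(W,\partial W)=-\chi(W)$). Applying it to $W=\overline{A_k^\pm(\tilde f)\cap\tilde I_\varepsilon}$ expresses $\chi(\overline{A_k^\pm(\tilde f)\cap\tilde I_\varepsilon})$ as one half of a sum of $\chi(\overline{A_{k+1}(\tilde f)\cap\tilde I_\varepsilon})$, $\chi(\overline{A_k^\pm(\partial f)})$, and possibly a correction term from the corner locus $\overline{A_{k+1}(\partial f)}$; in the difference of the $+$ and $-$ versions every term not carrying a $\pm$ cancels, leaving $\chi(\overline{A_k^+(\tilde f)\cap\tilde I_\varepsilon})-\chi(\overline{A_k^-(\tilde f)\cap\tilde I_\varepsilon})=\tfrac12\bigl[\chi(\overline{A_k^+(\partial f)})-\chi(\overline{A_k^-(\partial f)})\bigr]$. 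Summing over even $k$ and plugging this into the formula of Theorem \ref{RThm6E} gives $\deg_0 f=\tfrac12\sum_{k:\e}\bigl[\chi(\overline{A_k^+(\partial f)})-\chi(\overline{A_k^-(\partial f)})\bigr]$, which is the assertion after multiplying by $2$.

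The step I expect to be the main obstacle is the boundary identification $\overline{A_k^\pm(\tilde f)}\cap\partial\tilde B_\varepsilon=\overline{A_k^\pm(\partial f)}$: one must verify both that the $A_k$-stratum of $\tilde f$ meets $\partial\tilde B_\varepsilon$ cleanly along the $A_k$-stratum of $\partial f$ (via the ranks of the iterated Jacobians, as in Theorem \ref{RThm6C}) and that the local-degree sign separating $A_k^+$ from $A_k^-$ is preserved upon restriction to the boundary. The remaining steps are routine Euler-characteristic bookkeeping, and $\chi$ agrees with $\chi_c$ throughout since all the sets involved are compact.
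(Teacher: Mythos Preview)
Your proposal is correct and follows exactly the template the paper uses in the proof of Theorem~\ref{RThm6D} (the $n$ odd, $p$ even case): apply $\chi(W)=\tfrac12\chi(\partial W)$ to the odd-dimensional $\overline{A_k^\pm(\tilde f)\cap\tilde I_\varepsilon}$, identify the boundary piece on $\partial\tilde B_\varepsilon$ with $\overline{A_k^\pm(\partial f)}$, and cancel the shared $\overline{A_{k+1}}$ contribution in the difference. The paper omits the proof of this corollary precisely because it is this verbatim transplantation; your discussion of the sign-preservation under restriction (local degree of a product $g\times\Id$ versus that of $g$) is the equidimensional analogue of the Euler-characteristic-of-nearby-fiber argument invoked in Theorem~\ref{RThm6C}.
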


\begin{cor}
$$
\deg_0 f\equiv1+\sum_k \psi(\overline{A_k(\partial f)})+\# A_n(\tilde{f})
 \bmod 2.
$$
\end{cor}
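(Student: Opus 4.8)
The plan is to reduce modulo $2$ the integral formula behind Theorem \ref{RThm6E} and then carry out the telescoping bookkeeping of Theorem \ref{RThm6C} in this equidimensional situation. First I would start from the identity obtained in the proof of Theorem \ref{RThm6E},
$$
(\deg_0 f)(-1)^n=\sum_{k:\e}\bigl[\chi_c(A_k^+(\tilde f)\cap\tilde I_\varepsilon)-\chi_c(A_k^-(\tilde f)\cap\tilde I_\varepsilon)\bigr],
$$
which comes from applying Corollary \ref{RMorin4} to the Morin map $\tilde f_{\vert \tilde I_\varepsilon}:\tilde I_\varepsilon\to B_\varepsilon^n$ (legitimate because $\tilde I_\varepsilon$ and $B_\varepsilon^n$ are oriented, $B_\varepsilon^n$ is connected, $\tilde f$ is finite-to-one, and the product structure of $f$ near $\partial\tilde B_\varepsilon$ makes $\tilde f_{\vert \tilde I_\varepsilon}$ locally trivial at infinity). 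Reducing mod $2$ turns the signed differences into sums, and since $A_k(\tilde f)=A_k^+(\tilde f)\sqcup A_k^-(\tilde f)$ for even $k$, this becomes $\sum_{k:\e}\chi_c(A_k(\tilde f)\cap\tilde I_\varepsilon)\equiv\deg_0 f\bmod 2$.

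Next I would feed in additivity of $\chi_c$. Since $\tilde B_\varepsilon$ is a compact contractible manifold whose boundary $\partial\tilde B_\varepsilon=f^{-1}(S_\varepsilon^{n-1})$ is a homotopy $(n-1)$-sphere, $\chi_c(\tilde I_\varepsilon)=\chi(\tilde B_\varepsilon)-\chi(\partial\tilde B_\varepsilon)=(-1)^n\equiv1\bmod 2$, while at the same time $\chi_c(\tilde I_\varepsilon)=\sum_{k\ge0}\chi_c(A_k(\tilde f)\cap\tilde I_\varepsilon)$ with $A_0(\tilde f)$ the regular locus. Subtracting the two congruences yields
$$
\deg_0 f\equiv1+\sum_{k:\odd}\chi_c(A_k(\tilde f)\cap\tilde I_\varepsilon)\pmod 2.
$$

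It then remains to show $\sum_{k:\odd}\chi_c(A_k(\tilde f)\cap\tilde I_\varepsilon)\equiv\sum_k\psi(\overline{A_k(\partial f)})+\#A_n(\tilde f)\bmod 2$, and here I would copy the closing computation of Theorem \ref{RThm6C}. For each $k$ one has $\chi_c(A_k(\tilde f)\cap\tilde I_\varepsilon)\equiv\chi(\overline{A_k(\tilde f)})+\chi(\overline{A_{k+1}(\tilde f)})\bmod 2$ (using $\overline{A_k(\tilde f)\cap\tilde I_\varepsilon}=\overline{A_k(\tilde f)}$, the top stratum being dense), so the odd sum telescopes to $\sum_{k\ge1}\chi(\overline{A_k(\tilde f)})$. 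Since $A_n(\tilde f)$ is a finite set of interior points, $\chi(\overline{A_n(\tilde f)})=\#A_n(\tilde f)$; and for $1\le k\le n-1$ the closure $\overline{A_k(\tilde f)}$ is a compact manifold with corners of dimension $n-k$ whose face over $\partial\tilde B_\varepsilon$ is $\overline{A_k(\partial f)}$ (because $\tilde f$ agrees with $f$, which near $\partial\tilde B_\varepsilon$ is a product $\partial f\times\Id$), so the Euler-characteristic/semi-characteristic congruence (the relation used in the case $m-n$ even) gives $\chi(\overline{A_k(\tilde f)})\equiv\psi(\overline{A_k(\partial f)})\bmod 2$. Putting these together and substituting into the congruence of the previous paragraph produces the statement.

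The real work is this last step: turning the $\chi(\overline{A_k(\tilde f)})$'s into the $\psi(\overline{A_k(\partial f)})$'s. This needs a careful analysis of the manifold-with-corners $\overline{A_k(\tilde f)}$ near $\partial\tilde B_\varepsilon$ --- in particular of how the deeper-singularity face $\overline{A_{k+1}(\tilde f)}$ and the corner stratum $\overline{A_{k+1}(\partial f)}$ enter --- together with attention to the parity of $n-k$ when invoking the Euler-characteristic/semi-characteristic relation. Everything preceding it is a routine reduction mod $2$ of identities already established.
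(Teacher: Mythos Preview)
Your proposal is correct and follows essentially the same route as the paper. The only cosmetic difference is that you work with $\chi_c$ of the open strata and subtract the full sum $\chi_c(\tilde I_\varepsilon)=\sum_{k\ge0}\chi_c(A_k(\tilde f)\cap\tilde I_\varepsilon)$ from the even-$k$ sum, whereas the paper stays with $\chi$ of closures (via Theorem \ref{RThm6E}) and uses the $A_k^\pm$ decomposition $\chi(\overline{A_k^+})-\chi(\overline{A_k^-})\equiv\chi(\overline{A_k})+\chi(\overline{A_{k+1}})\bmod 2$ at each even $k$; both computations land on $\deg_0 f\equiv 1+\sum_{k\ge1}\chi(\overline{A_k(\tilde f)\cap\tilde I_\varepsilon})\bmod 2$ and then invoke the same $\chi\equiv\psi$ boundary relation you flag as the real work.
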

\begin{proof}
 We have:
$$
1
=\chi(\tilde{B}_\varepsilon)
=\chi(\overline{A_0^+(\tilde{f})\cap \tilde{I}_\varepsilon})
+\chi(\overline{A_0^-(\tilde{f})\cap \tilde{I}_\varepsilon})
-\chi(\overline{A_1(\tilde{f}) \cap \tilde{I}_\varepsilon}),
$$
hence:
$$
\chi(\overline{A_0^+(\tilde{f})\cap\tilde{I}_\varepsilon})
-\chi(\overline{A_0^-(\tilde{f})\cap\tilde{I}_\varepsilon}) 
\equiv
1+\chi(\overline{A_1(\tilde{f})\cap\tilde{I}_\varepsilon})
\equiv
1 +\psi(\overline{A_1(\partial f)}) \bmod 2.
$$ 
Similarly, if $k$ is even and dim $A_k >0$, then:
$$
\chi(\overline{A_k^+(\tilde{f})\cap \tilde{I}_\varepsilon}) 
-\chi(\overline{A_k^-(\tilde{f})\cap \tilde{I}_\varepsilon}) 
\equiv
\chi(\overline{A_k(\tilde{f})\cap \tilde{I}_\varepsilon}) 
+\chi(\overline{A_{k+1}(\tilde{f})\cap \tilde{I}_\varepsilon}) 
\bmod 2.
$$ 
Thus we obtain that:  
$$
\chi(\overline{A_k^+(\tilde{f})\cap \tilde{I}_\varepsilon})
-\chi(\overline{A_k^-(\tilde{f})\cap \tilde{I}_\varepsilon})
\equiv
\begin{cases}
\psi(\overline{A_k(\partial f)})
+\psi(\overline{A_{k+1}(\partial f)}) &
\textrm{if dim $A_k(\tilde{f})>1$}\\
\psi(\overline{A_k(\partial f)})+\# A_{k+1}(\tilde{f}) 
&
\textrm{if dim $A_k(\tilde{f})=1$}\\
\# A_k(\tilde{f})
&\textrm{if dim $A_k(\tilde{f})=0$}
\end{cases}
$$
modulo 2. 
\end{proof} 

If $n=2$, this gives:
$$
\deg_0 f \equiv 1+\frac{1}{2} b(C(f)) + \# A_2(\tilde{f}) \bmod 2,
$$
and we recover Theorem 2.1 of T.~Fukuda and 
G.~Ishikawa \cite{FukudaIshikawa}.

If $n=3$, this gives:
$$
\deg_0 f\equiv1+\psi (C(f) \cap \partial \tilde{B}_\varepsilon) +
\frac{1}{2} b(C(f_{\vert C(f)})) + \# A_3(\tilde{f}) \bmod 2.
$$

\lsection{Complex maps}\label{ComplexMap}
We end with some remarks in the complex case.
Let $f:M\to N$ be a holomorphic map between complex manifolds $M$ and $N$. 
We assume that $N$ is connected. 
We assume that $f$ is locally infinitesimally stable in J.~Mather's 
sense. 

Let $c_\sigma$ denote the Euler characteristic of the local generic 
fiber of the map-germ of singular type $\sigma$. 
Let $\chi_f$ denote the Euler characteristics of the generic 
fibers of $f$.
\begin{thm}
If a locally infinitesimally stable map $f:M\to N$ 
does not have singularities at infinity, 
then 
$$
\medsum_{\sigma}c_\sigma\,\chi_c(M_\sigma(f))
=\chi_f\, \chi_c(N).
$$
\end{thm}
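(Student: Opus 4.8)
The plan is to repeat, almost verbatim, the argument that proves the first formula~\eqref{F1} of Theorem~\ref{RThm1}, since passing to the complex setting only simplifies the bookkeeping: all real dimensions are even, so the parity and sign complications of Section~\ref{SecSatble1} never intervene. First I would stratify $M$ by the local type of $f$ and $N$ by the type of the multigerm $f_y$, exactly as in the real case (cf. Nakai \cite[\S1]{Nakai2000}); since $f$ is locally infinitesimally stable, in the complex analytic category this partition is a locally finite Whitney stratification adapted to $f$. Let $\mathcal S(M)$ and $\mathcal S(N)$ be the subset algebras generated by these strata together with the fibres of $f$; just as in the proof of Theorem~\ref{RThm1}, the pair $(\mathcal S(M),\mathcal S(N))$ then fits to $f$ in the sense of Section~\ref{Geometric integration}. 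Set $\mu_M=\mu_N=\chi_c$ and
$$
\varphi(x)=\chi_c\bigl(f^{-1}(y')\cap\overline{B_\varepsilon(x)}\bigr),
$$
where $y'$ is a regular value of $f$ close to $f(x)$ and $\overline{B_\varepsilon(x)}$ is a small closed ball around $x$.

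The two facts to establish are that $\varphi$ is a well-defined constructible function equal to $c_\sigma$ on the stratum $M_\sigma(f)$, and that its fibre integral $f_*\varphi$ is the constant $\chi_f$. For the first, the Milnor fibration of the holomorphic map-germ $f_x$ (conical structure of complex analytic sets, together with the Thom--Mather triviality of a stable germ over the complement of its discriminant) shows that the diffeomorphism type of $f^{-1}(y')\cap\overline{B_\varepsilon(x)}$ is independent of the regular value $y'$ chosen near $f(x)$, so $\varphi(x)=c_\sigma$ depends only on the type $\sigma$ of $f_x$; hence $\varphi=\sum_\sigma c_\sigma\,\BS{1}_{M_\sigma(f)}\in\Cons(M)$, the relevant sums being finite by hypothesis. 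Here one may use either the open or the closed Milnor fibre, as in Remark~\ref{L2}: in the complex case the two have the same Euler characteristic, the link being odd-dimensional. For the second fact, I would run the computation of Lemma~\ref{L1} unchanged: writing $\{x_1,\dots,x_s\}=f^{-1}(y)\cap\Sigma(f)$ and combining additivity of $\chi_c$ with the local triviality of $f$ over the complement of the discriminant gives $f_*\varphi(y)=\chi_c(f^{-1}(y'))$ for a regular value $y'$ near $y$. Finally, because $f$ has no singularities at infinity, the same argument as in the real case — replacing small balls by large ones — shows that $\chi_c(f^{-1}(y'))$ does not depend on the regular value $y'$; call this common value $\chi_f$. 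Thus $f_*\varphi\equiv\chi_f$ on all of $N$.

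It then remains only to invoke Viro's calculus: applying Corollary~\ref{Fubini3} with $d=\chi_f$ gives $\sum_i i\,\chi_c(X_i)=\chi_f\,\chi_c(N)$, where $X_i=\{x:\varphi(x)=i\}=\bigsqcup_{\sigma:\,c_\sigma=i}M_\sigma(f)$, and regrouping the left-hand side by $\sigma$ yields $\sum_\sigma c_\sigma\,\chi_c(M_\sigma(f))=\chi_f\,\chi_c(N)$, as desired. The one genuinely delicate point — and the place where the hypotheses really enter — is the input to the two facts above: that ``locally infinitesimally stable'' supplies a good locally finite stratification by singularity types over each stratum of which $f$ is trivial, and that ``no singularities at infinity'' lets one globalize the regular-fibre Euler characteristic to a single $\chi_f$. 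Once these are in place, the rest is a mechanical transcription of the proof of Theorem~\ref{RThm1}.
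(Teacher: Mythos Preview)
Your proposal is correct and follows essentially the same approach as the paper: the paper's proof is the single sentence ``Apply Corollary~\ref{Fubini3},'' and what you have written is precisely the unpacking of why that corollary applies in the complex setting (definition of $\varphi$, why it is constant on strata, and why $f_*\varphi\equiv\chi_f$).
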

\begin{proof}
Apply Corollary \ref{Fubini3}. 
\end{proof}
\begin{cor}\label{CMorin}
If a Morin map $f:M\to N$ is locally trivial at infinity, then: 
$$
\chi_c(M)+(-1)^{m-n}\medsum_{k=1}^{n}\chi_c(\overline{A_k(f)})
=\chi_f\, \chi_c(N)
$$
where $m$ denotes the complex dimension of $M$ 
and $n$ denotes the complex dimension of $N$. 
\end{cor}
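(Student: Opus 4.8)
The plan is to feed the Morin map $f$ into the theorem just proved (the one stating $\sum_\sigma c_\sigma\chi_c(M_\sigma(f))=\chi_f\chi_c(N)$, which rests on Corollary \ref{Fubini3}) and then keep careful track of the strata. A complex Morin map has only the singularity types $A_0$ (regular points) and $A_1,\dots,A_n$, where $A_k(f)$ is a complex submanifold of dimension $n-k$; in particular there are finitely many types, so once $f$ is locally trivial at infinity the theorem applies and gives $\sum_{k=0}^{n} c_{A_k}\chi_c(A_k(f)) = \chi_f\chi_c(N)$. Everything then comes down to computing the local generic fibre Euler characteristics $c_{A_k}$.

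For $k=0$ the local generic fibre is a ball, so $c_{A_0}=1$. For $k\ge 1$ I would start from the complex normal form of an $A_k$ point, $y_i\circ f = x_i$ for $i\le n-1$ and $y_n\circ f = x_n^{k+1}+\sum_{i=1}^{k-1}x_ix_n^{k-i}+x_{n+1}^2+\cdots+x_m^2$. Fixing $x_1,\dots,x_{n-1}$ at a regular value reduces the local fibre to the affine hypersurface $F=\{\,p(t)+z_1^2+\cdots+z_{m-n}^2=\varepsilon\,\}\subset\mathbb{C}^{1+(m-n)}$, with $p$ a monic polynomial of degree $k+1$. Projecting $F$ to the $t$-coordinate, the fibre over $t$ is the smooth affine quadric $\{z_1^2+\cdots+z_{m-n}^2=\varepsilon-p(t)\}$ when $p(t)\ne\varepsilon$, and the (contractible) quadric cone $\{z_1^2+\cdots+z_{m-n}^2=0\}$ when $p(t)=\varepsilon$. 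The smooth quadric, of complex dimension $m-n-1$, deformation retracts onto $S^{m-n-1}$, so its Euler characteristic is $1+(-1)^{m-n-1}=1-(-1)^{m-n}$. Since $p(t)=\varepsilon$ has $k+1$ roots for generic $\varepsilon$, multiplicativity and additivity of $\chi_c$ over the $t$-line give
$$
c_{A_k}=(k+1)\cdot 1+(1-(k+1))\bigl(1-(-1)^{m-n}\bigr)=1+k(-1)^{m-n}.
$$

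Substituting these values and using $M=\bigsqcup_{k=0}^{n}A_k(f)$ with additivity, i.e. $\chi_c(A_0(f))=\chi_c(M)-\sum_{k=1}^{n}\chi_c(A_k(f))$, the contributions of the $A_k(f)$'s collapse to
$$
\chi_c(M)+(-1)^{m-n}\sum_{k=1}^{n}k\,\chi_c(A_k(f))=\chi_f\chi_c(N).
$$
Finally I would rewrite the weighted sum in terms of the closures: from $\overline{A_k(f)}=\bigsqcup_{i\ge k}A_i(f)$ one gets $\chi_c(\overline{A_k(f)})=\sum_{i\ge k}\chi_c(A_i(f))$, hence $\sum_{k=1}^{n}\chi_c(\overline{A_k(f)})=\sum_{k=1}^{n}k\,\chi_c(A_k(f))$, which is exactly the asserted formula. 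The main obstacle is the computation of $c_{A_k}$: one has to justify that the complex affine quadric has the homotopy type of $S^{m-n-1}$ (equivalently that its $\chi_c$ equals $1-(-1)^{m-n}$), and that the level-set stratification of the $t$-line is adapted to $f$ so that $\chi_c$ may be used as a finitely additive measure there. The remaining steps—the description of the Morin strata, the identity for $\overline{A_k(f)}$, and the re-indexing of the double sum—are routine.
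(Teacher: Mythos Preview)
Your argument is correct and is essentially the intended proof: the paper states the corollary without any argument, leaving the reader to compute $c_{A_k}=1+(-1)^{m-n}k$ (equivalently, to recall that the complex Milnor number of an $A_k$ singularity is $k$, so that the Milnor fibre of complex dimension $m-n$ has Euler characteristic $1+(-1)^{m-n}k$) and then to perform exactly the telescoping identity $\sum_{k\ge1}\chi_c(\overline{A_k(f)})=\sum_{k\ge1}k\,\chi_c(A_k(f))$ that you give.

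One small remark on your derivation of $c_{A_k}$: you compute $\chi_c$ of the \emph{global} affine hypersurface $\{p(t)+Q(z)=\varepsilon\}\subset\mathbb{C}^{1+(m-n)}$, whereas $c_{A_k}$ is by definition the Euler characteristic of the \emph{local} generic fibre (the Milnor fibre in a small ball). These agree here because $t^{k+1}+z_1^2+\cdots+z_{m-n}^2$ is weighted homogeneous, so the global smooth fibre is diffeomorphic to the Milnor fibre; it would be cleaner either to say this explicitly, or simply to quote that $\mu(A_k)=k$ and use $\chi(\text{Milnor fibre})=1+(-1)^{m-n}\mu$. Either way, the conclusion and the remaining steps are exactly as you wrote.
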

We should remark that this formula was firstly formulated by Y.~Yomdin 
(see \cite{Yomdin83}). 
Note also that when $m=n$, then $\chi_f$ is also the topological degree of $f$.

Let $f=(f_1,f_2):(\BB{C}^2,0)\to(\BB{C}^2,0)$ be a holomorphic map-germ
with $c(f)<\infty$ where:
$$
c(f)=\dim_{\BB{C}}\mathcal O_2\bigg/I_2
\begin{pmatrix}
\pd{f_1}{x_1}&\pd{f_2}{x_1}&\pd{J}{x_1}\\
\pd{f_1}{x_2}&\pd{f_2}{x_2}&\pd{J}{x_2}
\end{pmatrix}, 
\qquad
J=
\begin{vmatrix}
\pd{f_1}{x_1}&\pd{f_2}{x_1}\\
\pd{f_1}{x_2}&\pd{f_2}{x_2}
\end{vmatrix}.
$$
\begin{cor}[{\cite[(1.8)]{GaffneyMond}}]
Let $f,g:(\BB{C}^2,0)\to(\BB{C}^2,0)$ be holomorphic map-germs with 
 $c(f)<\infty$, $c(g)<\infty$. 
Let $f_t$, $g_t$ denote stable perturbations of $f$, $g$. 
If $f$ and $g$ are topologically right-left equivalent, then 
$\#A_2(f_t)=\#A_2(g_t)$. 
\end{cor}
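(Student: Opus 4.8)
The plan is to apply the complex Morin formula (Corollary~\ref{CMorin}) to a stable perturbation of $f$ on a Milnor ball, identify each term of the resulting identity, and observe that everything on the right-hand side is a topological invariant of the germ.

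Since $c(f)<\infty$ the germ $f$ is finite, so $f^{-1}(0)=\{0\}$, and by the genericity results of Fukuda (Theorem~3 of \cite{Fukuda1981}, recalled in Section~\ref{SecMorin}) there is $\varepsilon_0>0$ so that for $0<\varepsilon\le\varepsilon_0$ the set $\widetilde B_\varepsilon=f^{-1}(\overline{B^4_\varepsilon})$ is a compact smooth $4$-manifold with boundary on which $f$ is the cone over $f|_{\partial\widetilde B_\varepsilon}:\partial\widetilde B_\varepsilon\to S^3_\varepsilon$; in particular $\widetilde B_\varepsilon$ is contractible. Choose a stable perturbation $f_t$ of $f$ coinciding with $f$ near $\partial\widetilde B_\varepsilon$; then, writing $\widetilde I_\varepsilon$ for the interior of $\widetilde B_\varepsilon$, the map $f_t:\widetilde I_\varepsilon\to B^4_\varepsilon$ is proper and locally trivial at infinity. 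Since $(2,2)$ is a pair of nice dimensions and we work over $\CC$, the only stable singularities occurring are the fold $A_1$ and the cusp $A_2$, both of Morin type, so $f_t$ is a Morin map and Corollary~\ref{CMorin} applies with complex dimensions $m=n=2$, i.e. $(-1)^{m-n}=1$:
$$
\chi_c(\widetilde I_\varepsilon)+\chi_c\bigl(\overline{A_1(f_t)}\bigr)+\chi_c\bigl(\overline{A_2(f_t)}\bigr)=\chi_{f_t}\,\chi_c(B^4_\varepsilon).
$$

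I would then evaluate the four terms: $\chi_c(\widetilde I_\varepsilon)=\chi(\widetilde B_\varepsilon)=1$ (contractible, with $\chi(\partial\widetilde B_\varepsilon)=\chi(S^3)=0$) and $\chi_c(B^4_\varepsilon)=1$; a generic fibre of $f_t$ inside the ball consists of $\deg_0 f$ points by conservation of multiplicity, so $\chi_{f_t}=\deg_0 f$; and $A_2(f_t)$ being $0$-dimensional, $\chi_c(\overline{A_2(f_t)})=\#A_2(f_t)$. Finally $\overline{A_1(f_t)}=\Sigma(f_t)$ is a smooth complex curve, a smoothing of the Jacobian curve $\Sigma(f)=\{\det df=0\}$; the assumption $c(f)<\infty$ forces this curve germ to be reduced (otherwise the cusp locus of $f$ would be $1$-dimensional), hence to have an isolated singularity with a well-defined Milnor number $\mu(\Sigma(f))$, and since every smoothing of a plane curve germ is diffeomorphic to its Milnor fibre we get $\chi_c(\Sigma(f_t))=1-\mu(\Sigma(f))$. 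Substitution yields
$$
\#A_2(f_t)=\deg_0 f-2+\mu(\Sigma(f)),
$$
which in particular already shows that $\#A_2(f_t)$ does not depend on the chosen stabilisation.

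It remains to see that the right-hand side depends only on the topological right-left class of $f$. The local degree is multiplicative and the homeomorphism-germs $h,k$ with $g=k\circ f\circ h^{-1}$ have degree $\pm1$, whereas $\deg_0 f,\deg_0 g>0$, so $\deg_0 g=\deg_0 f$. For a finite holomorphic germ $\CC^2\to\CC^2$ the critical set is exactly the locus where the map fails to be a local homeomorphism (a local degree $1$ point is a local biholomorphism), so $h$ restricts to a homeomorphism of germs of pairs $(\CC^2,\Sigma(f))\to(\CC^2,\Sigma(g))$; hence $\Sigma(f)$ and $\Sigma(g)$ have the same embedded topological type and $\mu(\Sigma(f))=\mu(\Sigma(g))$, so $\#A_2(f_t)=\#A_2(g_t)$. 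The main obstacle I expect is the second step: arranging the Milnor ball so that Corollary~\ref{CMorin} genuinely applies to $f_t$ on $\widetilde I_\varepsilon$ (properness and local triviality at infinity of the perturbation), and justifying that the critical curve of a stable perturbation of $f$ is a smoothing of the reduced Jacobian curve germ realising its Milnor fibre; after that the topological-invariance argument is routine.
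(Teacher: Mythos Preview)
Your proposal is correct and follows essentially the same route as the paper: apply the complex Morin formula (Corollary~\ref{CMorin}) to a stable perturbation on a Milnor ball to obtain $1+\chi_c(\overline{A_1(f_t)})+\#A_2(f_t)=\deg_0 f$, then argue that both $\deg_0 f$ and $\chi_c(\overline{A_1(f_t)})=1-\mu(\Sigma(f))$ are topological invariants because the critical set is topologically characterised. The paper's proof is terser, simply writing down the key identity and invoking the topological invariance of $\Sigma(f)$ and of the local degree, whereas you spell out the Milnor-ball setup and the smoothing argument for the critical curve more carefully; but the substance is the same.
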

\begin{proof}
Since the critical set 
can be characterized topologically, 
$(\BB{C}^2,\Sigma(f),0)$ and $(\BB{C}^2,\Sigma(g),0)$ are 
topologically equivalent, 
and they have the same Milnor number. 
Thus their smoothings have the same Euler characteristic and  
$\chi(\overline{A_1(f_t)})=\chi(\overline{A_1(g_t)})$.  
By Corollary \ref{CMorin}, we have: 
\begin{align*}
1+\chi_c(\overline{A_1(f_t)})+\#A_2(f_t)=&\deg_0 f,\\
1+\chi_c(\overline{A_1(g_t)})+\#A_2(g_t)=&\deg_0 g,
\end{align*}
and, since $\deg_0 f=\deg_0 g$, we conclude the result.
\end{proof}

\begin{rem}
Consider the map germ $f=(f_1,f_2):(\BB{C}^n,0)\to(\BB{C}^2,0)$, $n>2$.
Take a stable perturbation $f_t$ of $f$.
We have:
\begin{equation}\label{eqCM}
\chi_c(\overline{A_1(f_t)})+\#A_2(f_t)=(-1)^n(\chi_f-1).
\end{equation}
Consider the map 
$F:(\BB{C}^n,0)\times(\BB{C},0)\to(\BB{C}^2,0)\times(\BB{C},0)$ 
defined by $F(x,t)=(f_t(x),t)$. 
Since $\overline{A_1(F)}$ is determinantal, 
it is Cohen-Macaulay.  
So the map 
$\overline{A_1(F)}\to(\BB{C},0)$, $(x,t)\mapsto t$, is flat.  
So $\overline{A_1(f_t)}$ is a smoothing of $\overline{A_1(f)}$ and 
its Euler characteristic is described by the Milnor number of 
$\overline{A_1(f)}$: 
$\chi_c(\overline{A_1(f_t)})=1-\mu(\overline{A_1(f)})$, 
and we conclude that 
$\mu(\overline{A_1(f)})$ and $\chi_f$ determine $\#(A_2(f_t))$. 
Now we assume that $f$ is $\mathcal A$-finite. 
Then, we have: 
\begin{align*}
1-\mu(\overline{A_1(f)})=\chi_c(\overline{A_1(f_t)})
=&\chi_c(f_t(\overline{A_1(f_t)}))+d(f_t)\\
=&1-\mu(f(\overline{A_1(f)}))+2\#(A_2(f_t))+2d(f_t),
\end{align*}
where $d(f_t)$ denotes the number of double fold ($A_{1,1}$) 
points of $f_t$ nearby $0$. 
Combining this with \eqref{eqCM}, we obtain: 
$$
3\#A_2(f_t)+2d(f_t)=\mu(f(\overline{A_1}(f)))-1+(-1)^n(\chi_f-1).
$$
We conclude that 
$3\#A_2(f_t)+2d(f_t)$ (and thus $\#A_2(f_t)\bmod 2$) 
is a topological invariant of $f$. 
\end{rem}
\begin{rem}
Consider a map germ 
$f:(\BB{C}^3,0)\to(\BB{C}^3,0)$, $x\mapsto y=f(x)$. 
Take a stable perturbation $f_t$ of $f$. Then we obtain: 
$$
1+\chi_c(\overline{A_1(f_t)})+\chi_c (\overline{A_2(f_t)})+\#A_3(f_t)
=\deg f.
$$
Consider the map  
$F:(\BB{C}^3,0)\times(\BB{C},0)\to(\BB{C}^3,0)\times(\BB{C},0)$ 
defined by $F(x,t)=(f_t(x),t)$. 
Since $\overline{A_2(F)}$ is determinantal, it is Cohen-Macaulay. 
We obtain that the map 
$\overline{A_2(F)}\to(\BB{C},0)$, $(x,t)\mapsto t$, is flat.  
So $\overline{A_2(f_t)}$ is a smoothing of $\overline{A_2(f)}$ and 
its Euler characteristic $\chi_c(\overline{A_2(f_t)})$ 
is described by Milnor number of $\overline{A_2(f)}$ 
when $\overline{A_2(f)}$ has an isolated singularity at $0$.
This means $\#(A_3(f_t))$ is determined by $\mu(\overline{A_1(f)})$, 
$\mu(\overline{A_2(f)})$ and $\deg f$:
$$
\#(A_3(f_t))=\deg f-\mu(\overline{A_1(f)})+\mu(\overline{A_2(f)})-3,
$$
when $\overline{A_1(f)}$ and $\overline{A_2(f)}$ 
have isolated singularities at $0$.
%
\end{rem}
\begin{rem}
Consider a map germ $f:(\BB{C}^n,0)\to(\BB{C}^3,0)$, $n>3$. 
Take a stable perturbation $f_t$ of $f$. Then we obtain: 
$$
\chi_c(\overline{A_1(f_t)})+\chi_c (\overline{A_2(f_t)})+\#A_3(f_t)
=(-1)^n(1-\chi_f).
$$
Consider the map  
$F:(\BB{C}^n,0)\times(\BB{C},0)\to(\BB{C}^3,0)\times(\BB{C},0)$ 
defined by $F(x,t)=(f_t(x),t)$. 
Since $\overline{A_1(F)}$ is determinantal, it is Cohen-Macaulay.
We obtain that the map 
$\overline{A_1(F)}\to(\BB{C},0)$, $(x,t)\mapsto t$, is flat, 
and $\overline{A_1(f_t)}$ is a smoothing, which is determinantal.  
So the topology of $\overline{A_1(f_t)}$ is determined by
 $\overline{A_1(f)}$ when $\overline{A_1(f)}$ has isolated singularity
 at $0$.  
By Theorem 2.9 in \cite{Fukui-Weyman}, 
$\overline{A_2(F)}$ is Cohen-Macaulay if and only if $n=4,5$. 
We thus obtain that the map 
$\overline{A_2(F)}\to(\BB{C},0)$, $(x,t)\mapsto t$, is flat, 
if only if $n=4,5$.  
Assume that $n=4,5$. Then 
$\overline{A_2(f_t)}$ is a smoothing of $\overline{A_2(f)}$ and 
its Euler characteristic $\chi_c(\overline{A_2(f_t)})$ 
is described by the Milnor number of $\overline{A_2(f)}$:
$\chi_c(\overline{A_2(f_t)})=1-\mu(\overline{A_2(f)})$.
This means $\#(A_3(f_t))$ is determined by $\mu(\overline{A_1(f)})$, 
$\mu(\overline{A_2(f)})$ and $\chi_f$.
When $n\ge6$, we do not know whether
$\chi_c(\overline{A_2(f_t)})=1-\mu(\overline{A_2(f)})$ 
holds or not.  
%
%
\end{rem}
The following example also shows that the reduced structure 
of singularities locus may not fit the context of deformation of maps.  
\begin{exam}
Let us consider the image of the map $g:\BB{C}\to\BB{C}$ defined by 
$s\mapsto(s^3,s^4,s^5)$, which Milnor number $\mu$ is 4. 
The defining ideal is: 
$$
I_0=\langle xz-y^2,\ yz-x^3,\ x^2y-z^2\rangle.
$$
We know it defines a Cohen-Macaulay space. Consider the map: 
$$
G:(\BB{C}^2,0)\to(\BB{C}^4,0)\quad 
\textrm{defined by}\quad (s,t)\mapsto(x,y,z,t)
=(st+s^3,s^4,s^5,t). 
$$
Remark that $g_0(s)=g(s)$ where $G(s,t)=(g_t(s),t)$. 
The image of $g_t$, $t\ne0$, is nonsingular, 
and its Euler characteristic is 1, which is not $1-\mu$. 
Let us see what happens in this example. 
Eliminating $s$ from the ideal generated by: 
$$
x-st-s^3,\ y-s^4,\ z-s^5, 
$$ 
we obtain the ideal:
\begin{multline*}
I=\langle 
z^2-x^2y+ty^2+txz, 
xy^2-x^2z+tyz+t^2xy-t^3z,
y^3-xyz+tx^2y-t^2y^2+t^2xz,\\
xyz+tz^2-x^4+2tx^2y+2t^2xz+t^4y,
y^2z-xz^2+tx^2z-2t^2yz-t^3xy+t^4z
\rangle
\end{multline*}
of $\BB{C}\{x,y,z,t\}$. 
We remark that the variety $X$ defined by the ideal $I$ is not Cohen-Macaulay.
We also remark that this defines a reduced space, but the fiber
 $\pi^{-1}(0)$, 
where $\pi:X\to\BB{C}$ is the projection  $\pi(x,y,z,t)=t$,
is not reduced, since 
$$
\BB{C}\{x,y,z,t\}/I\otimes_{\BB{C}}\BB{C}\{t\}/\langle t\rangle 
\simeq
\BB{C}\{x,y,z\}/I_0\cap\langle x,\ y^3,\ y^2z,\ z^2\rangle. 
$$
\end{exam}


\begin{thebibliography}{99}
\bibitem{Arnold-GuseinZade-Varchenko}
V.~I.~Arnol'd, S.~M.~Gusein-Zade and A.~N.~Varchenko,  
Singularities of differentiable maps. Vol. I. 
The classification of critical points, caustics and wave fronts. 
Monographs in Mathematics,
{\bf 82}, Birkh\"auser Boston, Inc., Boston, MA, 1985.
\bibitem{Broughton}
S.~A.~Broughton, On the topology of polynomial hypersurfaces, Proceedings
A.~M.~S.~Symp. in Pure. Math., vol. 40, I (1983), 165-178.
\bibitem{Fukuda1981}
T.~Fukuda, Local topological properties of differentiable mappings I, 
Invent. Math. {\bf 65}  (1981/82), no. 2, 227-250.
\bibitem{Fukuda1985} 
T.~Fukuda, Local topological properties of differentiable mappings II,  
Tokyo J. Math. {\bf 8}  (1985),  no. 2, 501-520.
\bibitem{Fukuda1988}
T.~Fukuda, Topology of folds, cusps and Morin singularities.  A f\^ete of topology, Academic Press, Boston, MA, 1988, 331-353.
\bibitem{FukudaIshikawa}
T.~Fukuda and  G.~Ishikawa, On the number of cusps of stable perturbations of a plane-to-plane 
singularity, Tokyo J. Math. {\bf  10 } (1987),  no. 2, 375-384.
\bibitem{Fukui-Weyman}
T.~Fukui and J.~Weyman, 
Cohen-Macaulay properties of Thom-Boardman strata. II. 
The defining ideals of $\Sigma^{i,j}$, 
Proc. London Math. Soc. (3) {\bf 87} (2003), no. 1, 137--163.
\bibitem{GaffneyMond}
T.~Gaffney and D.~Mond, Cusps and double folds of germs of analytic maps 
	$\BB{C}^2\to\BB{C}^2$, 
J. London Math. Soc. (2) 43 (1991) 185-192.
\bibitem{Khimshiashvili}
G.~M.~Khimshiashvili, On the local degree of a smooth map,
Soobshch. Akad. Nauk Gruz. SSR {\bf 85} (1977), 309-311.
\bibitem{Levine}
H.~I.~Levine, Mappings of manifolds into the plane,
	Amer. J. Math. {\bf88} 
(1966) 357-365. 
\bibitem{Nakai1996}
I.~Nakai, Charateristic classes and fiber products of smooth mappings,
Preprint.
\bibitem{Nakai2000}
I.~Nakai, Elementary topology of stratified mappings,  Singularities-Sapporo 1998, Adv. Stud. Pure Math., {\bf 29}, Kinokuniya, Tokyo, 2000,  221-243. 
\bibitem{Quine}
J.~R.~Quine, 
A global theorem for singularities of maps between oriented 2-Manifolds, 
Trans. Amer. Math. Soc. {\bf 236} (1978), 307-314.
\bibitem{Saeki}
O.~Saeki, Studying the topology of Morin singularities from a global viewpoint,  Math. Proc. Cambridge Philos. Soc.  {\bf 117}  (1995),  no. 2, 223-235.
\bibitem{Thom}
R.~Thom, Les singularit\'es des applications diff\'erentiables, Ann. Inst. Fourier, Grenoble  {\bf 6}  (1955--1956), 43-87. 
\bibitem{TibarZaharia}
M.~Tibar and A.~Zaharia, 
Asymptotic behaviour of families of real curves, manuscripta math. 
{\bf 99} (1999), 383--393. 
\bibitem{Viro1988}
O.~Viro, 
Some integral calculus based on Euler characteristic, 
Lecture Notes in Math. 1346 (1988) 127-138, 
Springer-Verlag, Heidelberg and New-York
\bibitem{Viro1990}
O.~Viro, 
Plane real algebraic curves: constructions with controlled topology, 
Leningrad Math. J. {\bf 1} (1990) 1059-1134. 
\bibitem{Yomdin83}
Y.~Yomdin, 
The structure of strata $\mu={\rm const}$ in a critical set 
of a complete intersection singularity, Singularities, Part 2 
(Arcata, Calif., 1981), 663-665, Proc. Sympos. Pure Math., {\bf 40}, 
Amer. Math. Soc., Providence, RI, 1983. 
\bibitem{Wall1983}
C.~T.~C.~Wall, Topological invariance of the Milnor number mod $2$,
	Topology  {\bf 22}  (1983),  no. 3, 345-350. 
\bibitem{Wall2009}
C.~T.~C.~Wall, 
Transversality in families of mappings,  Proc. Lond. Math. Soc. (3)
	{\bf 99}  (2009),  no. 1, 67-99.
\end{thebibliography}
\end{document}